\documentclass[12pt]{article}
%\renewcommand{\baselinestretch}{1.42}
%Preamble - from thesis
\usepackage{amsmath}
\usepackage{amssymb}
\usepackage{braket} %Set notation
\usepackage{enumerate} % roman enumeration
\usepackage{amsthm}
\usepackage{graphicx}
\usepackage{psfrag}
\usepackage[T1]{fontenc}

\newtheorem{theorem}{Theorem}[section]
\newtheorem{proposition}[theorem]{Proposition}
\newtheorem{lemma}[theorem]{Lemma}
\newtheorem{corollary}[theorem]{Corollary}

\numberwithin{equation}{section}

\theoremstyle{definition}
\newtheorem{definition}[theorem]{Definition}
\newtheorem{remark}[theorem]{Remark}
\newtheorem{remarks}[theorem]{Remarks}
\newtheorem{question}[theorem]{Question}

\newcommand{\IIi}{${\rm{II}}_1\ $}
\newcommand{\tr}{\tau}

\newcommand{\nm}[1]{\left\|{#1}\right\|}
\newcommand{\VN}[1]{\mathcal L#1}
\newcommand{\Cover}[2]{K_{#1}\left(#2\right)}
\newcommand{\Pack}[2]{P_{#1}\left(#2\right)}

\newcommand{\FG}{\mathbb F}
\newcommand{\Mat}[1]{\mathbb M_{#1}(\mathbb C)}
\newcommand{\MatSA}[1]{\mathbb M_{#1}^{\textrm{sa}}(\mathbb C)}

\renewcommand{\P}{\mathcal P}
\newcommand{\Peq}{\mathcal P_{\textrm{eq}}}
\newcommand{\I}{\mathcal I}
\newcommand{\Gen}[1]{\mathcal G(#1)}
\newcommand{\GenSA}[1]{\mathcal G_{\textrm{sa}}(#1)}

\newcommand{\GenMin}[1]{\mathcal G^{\textrm{min}}(#1)}
\newcommand{\GenSAMin}[1]{\mathcal G_{\textrm{sa}}^{\textrm{min}}(#1)}
\newcommand{\Unitary}{\mathcal U}

\newcommand{\Fund}[1]{\mathcal F(#1)}
\newcommand{\ip}[2]{\left<#1,#2\right>}
\newcommand{\Tr}{\textrm{Tr}}
\newcommand{\ttr}{\textrm{tr}}

\newcommand{\refines}{\preceq}
\newcommand{\eps}{\varepsilon}
\newcommand{\TR}{\textrm{tr}}
\newcommand{\ko}{\mathcal K}
%End Preamble

\title{Generators of \IIi factors}
\author{Ken Dykema\thanks{Research supported in part by NSF grant DMS-0600814}\\\normalsize\texttt{kdykema@math.tamu.edu}\and Allan Sinclair\\\normalsize\texttt{a.sinclair@ed.ac.uk}\and Roger Smith\thanks{Research supported in part by NSF grant DMS-0401043}\\\normalsize\texttt{rsmith@math.tamu.edu}\and Stuart White\\\normalsize\texttt{s.white@maths.gla.ac.uk}}
\date{}

\begin{document}
\maketitle
\begin{abstract}
In 2005, Junhao Shen introduced a new invariant, $\mathcal G(N)$, of a
diffuse von Neumann algebra $N$ with a fixed faithful trace, and he used
this invariant to give a unified approach to showing that large
classes of ${\mathrm{II}}_1$ factors $M$ are singly generated.  This paper
focuses on properties of this invariant. We relate $\mathcal G(M)$
to the number of self-adjoint generators of a ${\mathrm{II}}_1$
factor $M$: if $\mathcal G(M)<n/2$, then $M$ is generated by $n+1$
self-adjoint operators, whereas if $M$ is generated by $n+1$
self-adjoint operators, then $\mathcal G(M)\leq n/2$. The
invariant $\mathcal G(\cdot)$ is well-behaved under amplification,
satisfying $\mathcal G(M_t)=t^{-2}\mathcal G(M)$ for all $t>0$. In
particular, if $\mathcal G(\mathcal L\mathbb F_r)>0$ for any particular $r>1$, then the free group factors are pairwise non-isomorphic and are not singly generated for sufficiently large values of $r$. Estimates are given for forming free
products and passing to finite index subfactors and the basic construction.
We also examine a version of the invariant $\mathcal G_{\text{sa}}(M)$
defined only using self-adjoint operators; this is proved to satisfy
$\mathcal G_{\text{sa}}(M)=2\mathcal G(M)$.  Finally we give inequalities
relating a quantity involved in the calculation of $\mathcal
G(M)$ to the free-entropy dimension $\delta_0$ of a collection of generators for $M$.
\end{abstract}

\section{Introduction}

An old problem in von Neumann algebra theory is the question of whether each separable von Neumann algebra $N$ is singly generated. A single generator $x$ leads to two self-adjoint generators $\{x+x^*, i(x-x^*)\}$ and any pair $\{h,k\}$ of self-adjoint generators yields a single generator $h+ik$. Thus the single generation problem has an equivalent formulation as the existence of two self-adjoint generators. Earlier work in this area solved all cases except for the finite von Neumann algebras, \cite{Douglas.SingleGenerator,Pearcy.SingleGeneratorType1,Topping.Lectures,Wogen.Generators}. Here there has been progress in special situations, \cite{Ge.DecompositionProperties,Ge.GeneratorT,Shen.Generators}, but a general solution is still unavailable. Recently Junhao Shen, \cite{Shen.Generators}, introduced a numerical invariant $\mathcal{G}(N)$, and was able to show that single generation for ${\rm{II}}_1$ factors was a consequence of $\mathcal{G}(N)<1/4$. He proved that $\mathcal{G}(N)=0$ for various classes of \IIi factors, giving a unified approach to the single generation of \IIi factors with Cartan masa, with property $\Gamma$ and those factorising as tensor products of \IIi factors. His work settled some previously unknown cases as well as giving a unified approach to various situations that had been determined by diverse methods. It should be noted that 0 is the only value of Shen's invariant that is currently known. If strictly positive values are possible, then Corollary \ref{Scaling.NonSingle} guarantees examples of separable \IIi factors which are not singly generated.

In this paper, our purpose is to undertake a further investigation of this invariant, and to relate it to a quantity $\mathcal{G}^{\text{min}}(M)$ which counts the minimal number of generators for $M$. A related quantity $\mathcal{G}^{\text{min}}_{\text{sa}}(M)$ counts the minimal number of self-adjoint generators, and there is a parallel invariant $\mathcal{G}_{\text{sa}}(M)$ to $\mathcal{G}(M)$ which has a similar definition (given below) but which restricts attention to self-adjoint generating sets.

The contents of the paper are as follows. The second section gives the definitions of $\mathcal{G}(N)$ and $\mathcal{G}_{\text{sa}}(N)$ in terms of generating sets and finite decompositions of 1 as sums of orthogonal projections. This is a slightly different but equivalent formulation of the original one in \cite{Shen.Generators}. These are related by the inequalities $\mathcal{G}(N) \le \mathcal{G}_{\text{sa}}(N) \le 4\mathcal{G}(N)$, although it is shown subsequently that $\mathcal{G}(M) =2\mathcal{G}_{\text{sa}}(M)$ for all \IIi factors $M$. The main result of the third section is that the relation $\mathcal{G}(M) < n/2$ for \IIi factors $M$ implies generation by $n+1$ self-adjoint elements. The case $n=1$ is of particular interest since single generation is then a consequence of $\mathcal{G}(M)<1/2$.

The fourth section relates the generator invariant of a \IIi factor $M$ to that of a compression $pMp$. If $\tau(p)=t$, then $\mathcal{G}(pMp) = t^{-2}\mathcal{G}(M)$. Up to isomorphism, $M_t$ can be uniquely defined as $pMp$ for any projection $p\in M$ with $\tau(p)=t$, $0<t<1$. In a standard way, $M_t$ can be defined for any $t>0$ as $p(\mathbb{M}_n\otimes M)p$ where $n$ is any integer greater than $t$, and $p\in \mathbb{M}_n \otimes M$ is a projection of trace $t/n<1$. In this more general situation, the scaling formula $\mathcal{G}(M_t) = t^{-2}\mathcal{G}(M)$ for $t>0$ also holds. The subsequent section contains some consequences of the scaling formula, and also establishes it for the related invariant $\mathcal{G}_{\text{sa}}(M)$. This requires a more indirect argument since the method of passing between generating sets for $M$ and for $M_t$ does not preserve self-adjointness and so cannot apply to $\mathcal{G}_{\text{sa}}(M_t)$ although it is suitable for $\mathcal{G}(M_t)$. The equality $\mathcal{G}_{\text{sa}}(M) = 2\mathcal{G}(M)$ is also established in this section.

The sixth section is concerned with finite index inclusions $N\subseteq M$ of \IIi factors. The main results are that $\mathcal{G}(\langle M,e_N\rangle) \leq \mathcal{G}(M)$ and that $\mathcal{G}(N) = \lambda^2 \mathcal{G}(\langle M,e_N\rangle)$, where $\langle M,e_N\rangle$ is the basic construction and $\lambda$ denotes the index $[M:N]$. A standard result of subfactor theory is that $M$ is the basic construction $\langle N,e_P\rangle$ for an index $\lambda$ inclusion $P\subseteq N$, so two of these basic constructions scale $\mathcal{G}(\cdot)$ by $\lambda^2$. This suggests the formula $\mathcal{G}(\langle M,e_N\rangle ) = \lambda\mathcal{G}(M)$, but this is still an open problem.

Section \ref{Free} concentrates on free group factors and their generalisations, the interpolated free group factors. For $r\in (0,\infty]$, the formula $\mathcal{G}(\mathcal{L} \mathbb{F}_{1+r}) = r\alpha$ is established, where $\alpha$ is a fixed constant in the interval $[0,1/2]$. This leads to two possibilities, depending on the value of $\alpha$. If $\alpha=0$, then $\mathcal{L}(\mathbb{F}_{1+r})$ is singly generated for all $r>0$, while if $\alpha>0$, then the free group factors are pairwise non-isomorphic, being distinguished by the generator invariant. The paper concludes with a discussion of Voiculescu's modified free entropy dimension $\delta_0(X)$, where $X$ is a finite generating set for $M$. A quantity $\mathcal{I}(X)$ is introduced in the second section on the way to defining $\mathcal{G}(M)$. The main results of the last section are the inequalities $\delta_0(X) \le 1 +2{\mathcal I}(X)$ for general finite generating sets, and the stronger form $\delta_0(X) \le 1 + {\mathcal I}(X)$ for generating sets of self-adjoint elements. These have the potential for providing lower bounds for $\mathcal{G}(M)$.

Finally, a word on notation. For a subset $X$ of a von Neumann algebra $M$, $W^*(X)$ will denote the von Neumann algebra generated by $X$. It is not assumed that $W^*(X)$
automatically contains the identity of $M$. For example, $W^*(p)=\mathbb C p$ for a projection $p\in M$.

\section{The generator invariant}\label{Prelim}
The main focus of the paper is on \IIi factors. However we define the generator invariant and establish basic results in the context of diffuse finite von Neumann algebras with a fixed faithful  trace $\tr$, which is normalised with $\tr(1)=1$. Diffuse finite von Neumann algebras will be denoted by $N$, while $M$ is reserved for $\text{II}_1$ factors. 

\begin{definition}\label{Prelim.DefP}
Let $(N,\tr)$ be a finite von Neumann algebra with fixed trace.  Let $\P$ (or $\P(N)$ when the underlying algebra is unclear) denote the collection of all finite sets of mutually orthogonal projections in $N$ which sum to $1$.  An important subclass of $\P$ is the collection $\Peq$ of those $P$ which consist of projections of equal trace. Note that in the context of a finite factor $M$, $P=\{p_1,\dots,p_k\}\in\Peq(M)$ can be diagonalised in the sense that there exist matrix units $(e_{i,j})_{i,j=1}^k$ in $M$ satisfying $e_{i,i}=p_i$ for all $i$.  In this factor context,  the elements of $\Peq$ are referred to as the \emph{diagonalisable} elements of $\P$.
\end{definition}

\begin{definition}
Consider $P,Q\in\P$. Then $Q$ \emph{refines} $P$, written $Q\refines P$, when every $p\in P$ is a sum of elements of $Q$.  The sets $P$ and $Q$ are \emph{equivalent}, written $P\sim Q$, when there is a unitary $u\in N$ with $uPu^*=Q$.  Note that in a \IIi factor,  $P\sim Q$ if, and only if, the multiset of the traces of the elements of $P$ is the same as the multiset of the traces of the elements in $Q$.
\end{definition}

The ordering $Q\refines P$ defined above is chosen so that the map $P\mapsto\I(X;P)$ below is order preserving.  This will be established in Lemma \ref{Prelim.Refine}.

\begin{definition}\label{Prelim.DefI}
Let $N$ be a  finite von Neumann algebra.  Given $x\in N$ and $P\in\P$, define
$$
\I(x;P)=\sum_{\substack{p,q\in P\\pxq\neq 0}}\tr(p)\tr(q).
$$
For a finite subset $X\subset N$ and $P\in\P$, define 
$$
\I(X;P)=\sum_{x\in X}\I(x;P),
$$ 
and
\begin{equation}\label{Prelim.DefI.1}
\I(X)=\inf_{P\in\P}\I(X;P).
\end{equation}
On occasion, the notation $\I_N(X;P)$ and $\I_N(X)$ will emphasise the algebra $N$ and hence the choice of trace.
\end{definition}

The definition of $\I$ given above is formally different from that of Shen \cite[Definition 2.1]{Shen.Generators}, in that Shen only considers families from $\Peq$ and performs the limiting procedure in a slightly different order.  Nevertheless, the resulting invariant $\Gen{N}$ defined in Definition \ref{Prelim.DefG} below agrees with \cite[Definition 2.1]{Shen.Generators} in the case of diffuse von Neumann algebras. Before proceeding,  a few elementary observations are recorded.
\begin{remarks}\label{Prelim.RemarksI}
\begin{enumerate}
\item The inequality 
\[
0\leq\I(X)\leq\I(X;P)\leq|X|
\]
holds for all finite subsets $X$ in $N$ and all $P\in\P$.
\item If $X$ is a finite subset of $N$, then define $X^*=\Set{x^*|x\in X}$.  The equality
\[
\I(X^*;P)=\I(X;P)
\]
holds for all $P\in\P$.\label{Prelim.RemarksI.Star}
\item\label{Prelim.RemarksI.Commutant} Let $P\in\P$ be a set of $k$ projections. The estimate
$$
\I(x;P)=\sum_{\substack{p\in P\\px\neq 0}}\tr(p)^2\leq k\max\Set{\tr(p)^2|p\in\P}
$$
is valid for each $x\in N\cap P'$.
In particular, if $P\in\Peq$, then $$\I(x;P)\leq k^{-1}.$$
\item If $X=\{x_1,\dots,x_m\}\subset N$ and $P=\{p_1,\dots,p_k\}\in\Peq$\ then
$$
\I(X;P)=k^{-2}\left|\Set{(i,j,l)|p_ix_lp_j\neq 0}\right|.
$$
Writing $\I(X;P)$ in this way is often useful in calculations, an example being \cite[Theorem 4.1]{Shen.Generators}.
\item For any $z_1,z_2\in N$ and $P\in\P$, 
\begin{equation}\label{Prelim.RemarksI.1}
\I(z_1+z_2;P)\leq\I(z_1;P)+\I(z_2;P).
\end{equation}

\item If $X$ is a finite generating set of $N$, then so also is 
$$
Y=\Set{x+i x^*,i(x-i x^*)|x\in X}
$$ 
and, furthermore,
$$
\I(X;P)\leq\I(Y;P)\leq 4\I(X;P).
$$
This follows from the preceding remark. Take $z_1=x+i x^*$ and $z_2=x-i x^*$ for each $x\in X$ to obtain $\I(X;P)\leq\I(Y;P)$.  Now let $Y_1=\Set{x+i x^*|x\in X}$ and $Y_2=\Set{i(x-i x^*)|x\in X}$ so that (\ref{Prelim.RemarksI.1}) and item \ref{Prelim.RemarksI.Star} above combine to give 
\begin{equation}\label{Prelim.RemarksI.SA}
\I(Y_1;P)\leq 2\I(X;P) \quad \text{and}\quad \I(Y_2;P)\leq 2\I(X;P).
\end{equation}
\end{enumerate}
\end{remarks}

\begin{lemma}\label{Prelim.Refine}
Let $N$ be a  finite von Neumann algebra. Consider a finite subset $X\subset N$ and $P,Q\in\P$ with $Q\refines P$.  Then
$$
\I(X;Q)\leq\I(X;P).
$$
\end{lemma}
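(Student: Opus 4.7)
The plan is to exploit the refinement structure by partitioning $Q$ according to which element of $P$ each of its projections lies under. For each $p\in P$, I would set $Q_p=\{q\in Q: q\leq p\}$, so that the hypothesis $Q\refines P$ gives $p=\sum_{q\in Q_p}q$ and that the collection $\{Q_p:p\in P\}$ is a partition of $Q$. Fix $x\in X$. Then I would group the double sum defining $\I(x;Q)$ according to which $Q_p\times Q_{p'}$ each pair $(q,q')$ lies in, writing
\[
\I(x;Q)=\sum_{p,p'\in P}\;\sum_{\substack{q\in Q_p,\,q'\in Q_{p'}\\qxq'\neq 0}}\tr(q)\tr(q').
\]

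The key observation is the implication: if $pxp'=0$ then $qxq'=qpxp'q'=0$ for all $q\in Q_p$, $q'\in Q_{p'}$. Hence the inner sum vanishes whenever $pxp'=0$, so only blocks with $pxp'\neq 0$ contribute. For such a block, I would simply estimate the inner sum by dropping the constraint $qxq'\neq 0$ and use the additivity of the trace:
\[
\sum_{\substack{q\in Q_p,\,q'\in Q_{p'}\\qxq'\neq 0}}\tr(q)\tr(q')\leq\sum_{q\in Q_p,\,q'\in Q_{p'}}\tr(q)\tr(q')=\tr(p)\tr(p').
\]
Adding the contributions from the blocks with $pxp'\neq 0$ gives $\I(x;Q)\leq\I(x;P)$, and summing over $x\in X$ completes the proof.

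There isn't really a hard step here; the only subtlety is recognising that the ordering convention $Q\refines P$ means the finer partition gives the smaller value of $\I$, which is precisely what the block-wise bookkeeping above makes transparent. The argument uses nothing beyond the definition of $\I(X;P)$, the partition structure induced by refinement, and additivity of $\tr$ on orthogonal projections.
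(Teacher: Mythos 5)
Your proof is correct and rests on the same key observation as the paper's: a block $Q_p\times Q_{p'}$ contributes nothing when $pxp'=0$ and at most $\tr(p)\tr(p')$ otherwise, by additivity of the trace. The paper phrases this as merging two projections at a time followed by induction, whereas you carry out the full block decomposition in one step; the content is identical.
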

\begin{proof}
Take $x\in N$ and two pairs of orthogonal projections $(e_1,e_2)$ and $(f_1,f_2)$ in $N$.  If $(e_1+e_2)x(f_1+f_2)\neq0$, then
$$
\sum_{\substack{i,j\\e_ixf_j\neq 0}}\tr(e_i)\tr(f_j)\leq\tr(e_1+e_2)\tr(f_1+f_2).
$$
The result now follows by induction.
\end{proof}

In many applications it will be useful to know that the infimum defining $\Gen{X}$ in (\ref{Prelim.DefI.1}) can be taken through $\Peq$.  Recall that a separable diffuse abelian von Neumann algebra $A$ is isomorphic to $L^\infty[0,1]$ and, if $A$ is equipped with a trace, then  this isomorphism can be chosen so that the trace is given by $\int_0^1\cdot\ \mathrm{d}t$ on $L^\infty[0,1]$.
\begin{lemma}\label{Prelim.Diagonalisable}
Let $N$ be a diffuse von Neumann algebra and let $X$ be a finite subset of $N$.  For each $n\in\mathbb N$,
\begin{align*}
\I(X)&=\inf\Set{\I(X;P)| P\in\Peq,\quad |P|=nl\textrm{ some }l\in\mathbb N}\\
&=\lim_{l\rightarrow\infty}(\inf \Set{\I(X;P)| P\in\Peq,\quad |P|=nl}).
\end{align*}
\end{lemma}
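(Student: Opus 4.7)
The plan is to show the non-trivial inequality: for every $P\in\P$ one can produce $P_l\in\Peq$ with $|P_l|=nl$ and $\I(X;P_l)\le\I(X;P)+O(1/l)$. The opposite direction is immediate, since $\Peq\subseteq\P$ forces both right-hand quantities to be at least $\I(X)$.

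Fix $P=\{p_1,\dots,p_m\}\in\P$ and $l\in\mathbb N$. For each $i$ write $\tr(p_i)=k_i/(nl)+r_i$ with $k_i\in\mathbb Z_{\ge 0}$ and $0\le r_i<1/(nl)$. Since $N$ is diffuse, each corner $p_iNp_i$ is diffuse, so $p_i$ splits orthogonally into $k_i$ ``good'' subprojections of trace $1/(nl)$ together with a remainder $q_i$ of trace $r_i$. Setting $K=nl-\sum_ik_i$, the projection $\sum_iq_i$ has trace $K/(nl)$, and another appeal to diffuseness of $N$ splits it into $K$ ``bad'' orthogonal subprojections of trace $1/(nl)$. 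The union of all good and bad subprojections forms the desired $P_l\in\Peq$ with $|P_l|=nl$.

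To estimate $\I(X;P_l)$, fix $x\in X$ and separate pairs $(p',q')\in P_l\times P_l$ into good-good pairs and those involving at least one bad subprojection. For a good-good pair with $p'\le p_i$ and $q'\le p_j$, the condition $p'xq'\ne 0$ forces $p_ixp_j\ne 0$, whence
$$
\sum_{\substack{p',q'\ \textrm{good}\\p'xq'\ne 0}}\tr(p')\tr(q')\le\sum_{\substack{i,j\\p_ixp_j\ne 0}}\frac{k_ik_j}{(nl)^2}\le\I(x;P),
$$
using $k_i/(nl)\le\tr(p_i)$. The pairs involving at least one bad subprojection number at most $2Knl-K^2\le 2mnl$, and each contributes at most $(nl)^{-2}$, giving a total bounded by $2m/(nl)$. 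Summing over $x\in X$ yields $\I(X;P_l)\le\I(X;P)+2m|X|/(nl)$.

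Given $\eps>0$, choose $P^*\in\P$ with $\I(X;P^*)<\I(X)+\eps$, then take $l$ large enough that $2|P^*||X|/(nl)<\eps$. The resulting $P^*_l\in\Peq$ satisfies $\I(X;P^*_l)<\I(X)+2\eps$, which yields the upper bound $\I(X)$ for the combined-$l$ infimum and the limit superior, while the lower bound $\I(X)$ for the liminf is automatic; hence the limit exists and equals $\I(X)$. The main subtlety is producing the equal-trace projections in the first place: diffuseness of each corner $p_iNp_i$ and of $N$ itself is precisely what allows $L^\infty[0,1]$ to embed into the relevant corners, furnishing both the subdivision of each $p_i$ and the reassembly of the remainders into $K$ equal pieces. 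Once these exist, the good/bad bookkeeping is routine.
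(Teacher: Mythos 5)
Your proof is correct and follows essentially the same route as the paper: round each trace down to a multiple of $1/(nl)$, use diffuseness to carve out equal-trace subprojections, and show the leftover mass contributes only $O(1/l)$. The only (immaterial) difference is bookkeeping — the paper lumps the remainder into a single projection $p_{k+1}$, bounds its contribution by $2|X|\tr(p_{k+1})$, and then passes to an equal-trace refinement via Lemma \ref{Prelim.Refine}, whereas you split the remainder into equal pieces at once and count the ``bad'' pairs directly.
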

\begin{proof}
Fix $\eps>0$ and find $Q=\{q_1,\dots,q_k\}\in\P$ with
\begin{equation}\label{Prelim.Diagonalisable.20}
\I(X;Q)\leq \I(X)+\eps.
\end{equation}
Let $A$ be a diffuse abelian subalgebra of $N$ with $Q\subset A$, and let
\begin{equation}\label{Prelim.Diagonalisable.21}
\delta=\frac{\eps}{2|X|k}.
\end{equation}
By approximating the family $\tr(q_1),\dots,\tr(q_k)$ by  rational numbers with common denominator $nl$, there exists $l_0\in\mathbb N$ such that if $l\geq l_0$, then there are $r_1,\dots,r_k\in\mathbb N$ with
$$
\frac{r_i}{nl}\leq\tr(q_i)\leq\frac{r_i}{nl}+\delta
$$
for each $i$.  Choose projections $p_1,\dots,p_k$ in $A$ with $0\leq p_i\leq q_i$ and $\tr(p_i)=r_i/nl$ for each $i$.  Let $p_{k+1}=1-\sum_{i=1}^kp_i$, so $\tr(p_{k+1})\leq k\delta$. Let $P_0=\{p_1,\dots,p_{k+1}\}\in\P$.   Thus $q_ixq_j\neq 0$ whenever $p_ixp_j\neq 0$.  Hence
\begin{equation}\label{Prelim.Diagonalisable.1}
\I(X;Q)=\sum_{x\in X}\sum_{\substack{1\leq i,j\leq k\\q_ixq_j\neq 0}}\tr(q_i)\tr(q_j)\geq\sum_{x\in X}\sum_{\substack{1\leq i,j\leq k\\p_ixp_j\neq 0}}\tr(p_i)\tr(p_j).
\end{equation}
Now 
\begin{align*}
\I(X;P_0)\leq&\ \sum_{x\in X}\sum_{\substack{1\leq i,j\leq k\\p_ixp_j\neq 0}}\tr(p_i)\tr(p_j)\\
&\quad+|X|\left(\sum_{i=1}^k\tr(p_i)\tr(p_{k+1})+\sum_{j=1}^k\tr(p_{k+1})\tr(p_j)+\tr(p_{k+1})^2\right)\\
\leq&\ \I(X;Q)+2|X|\tr(p_{k+1})\\
\leq&\ \I(X;Q)+2k|X|\delta\leq\I(X;Q)+\eps,
\end{align*}
by inequality (\ref{Prelim.Diagonalisable.1}), the estimate $\tr(p_{k+1})\leq k\delta$ and the choice of $\delta$ in (\ref{Prelim.Diagonalisable.21}). Finally, refine $P_0$ to find a family $P\refines P_0$ in $\Peq$ with $nl$ elements.  Lemma \ref{Prelim.Refine} gives
$$
\I(X)\leq\I(X;P)\leq \I(X;P_0)\leq\I(X;Q)+\eps<\I(X)+2\eps,
$$
by (\ref{Prelim.Diagonalisable.20}). Since this can be done for any $l\geq l_0$, and $\varepsilon>0$ was arbitrary, the result follows.
\end{proof}

These preliminaries allow the \emph{generator invariant} of a diffuse finite von Neumann algebra to be defined.

\begin{definition}\label{Prelim.DefG}
If $N$ is a finitely generated diffuse finite von Neumann algebra, define the \emph{generator invariant} $\Gen{N}$ by
$$
\Gen{N}=\inf\Set{\I({X})|W^*(X)=N,\quad X\textrm{ is a finite subset of }N},
$$
and the \emph{hermitian, (or self--adjoint), generator invariant}, $\GenSA{N}$ by
$$
\GenSA{N}=\inf\Set{\I({X})|W^*(X)=N,\quad X\textrm{ is a finite subset of  }N_{\text{sa}}}.
$$
If $N$ is not finitely generated, then define $\Gen{N}=\GenSA{N}=\infty$.
\end{definition}

\begin{remark}\label{Prelim.SelfRemark}
By item \ref{Prelim.RemarksI.SA} of Remarks \ref{Prelim.RemarksI}, the inequalities
$$
\Gen{N}\leq\GenSA{N}\leq 4\Gen{N}
$$
hold for any diffuse finite von Neumann algebra $N$.  In Theorem \ref{SelfAd.Main} it will be shown that $\Gen{M}=2\GenSA{M}$ for all \IIi factors $M$.  
\end{remark}

\section{The generation theorem}
Theorem 4.1 of \cite{Shen.Generators} states that if a \IIi factor $M$ has $\Gen{M}<1/4$, then it is generated by a projection and a hermitian element.  It is subsequently remarked that the same proof shows that a \IIi factor $M$ with $\Gen{M}<1/2$ is singly generated.  The theorem below strengthens this result to consider \IIi factors which may not be singly generated, with Shen's remark arising from the case $n=1$. The basic idea is the same combinatorical counting argument of \cite{Shen.Generators} which dates back through \cite{Ge.DecompositionProperties} and \cite{Ge.GeneratorT} to work in the  1960's  by Douglas, Pearcy and Wogen, \cite{Douglas.SingleGenerator,Pearcy.SingleGeneratorType1,Wogen.Generators}.  A good account of this material can be found in the book by Topping, \cite{Topping.Lectures}.

Recall that if $(e_{i,j})_{i,j=1}^k$ are matrix units for the $k\times k$ matrices, $\Mat{k}$, then the self--adjoint elements $\sum_{i=1}^{k-1}(e_{i,i+1}+e_{i+1,i})$ and $e_{k,k}$ generate $\Mat{k}$.

\begin{theorem}[The generation theorem]\label{Number.Hermitians}
Let $M$ be a separable $\text{\rm II}_1$ factor and $n\in\mathbb N$.  If $\Gen{M}<n/2$, then $M$ is generated by $n+1$ hermitian elements.
\end{theorem}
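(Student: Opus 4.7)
My approach is a direct combinatorial encoding in the style of~\cite{Douglas.SingleGenerator,Pearcy.SingleGeneratorType1,Wogen.Generators,Ge.DecompositionProperties,Ge.GeneratorT,Shen.Generators}: express a finite generating set of $M$ through its matrix pieces against a fine diagonalisable partition, then pack those pieces into $n$ hermitian elements while reserving a further hermitian element to ``label'' the partition. The numerical constraint $\Gen{M}<n/2$ is calibrated exactly so that this packing fits.

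Using $\Gen{M}<n/2$, fix a finite generating set $X=\{x_1,\dots,x_m\}\subset M$ and a constant $c$ with $\I(X)<c<n/2$. Lemma~\ref{Prelim.Diagonalisable} supplies, for each sufficiently large integer $k$, a partition $P=\{p_1,\dots,p_k\}\in\Peq$ with $\I(X;P)<c$; I fix matrix units $(e_{i,j})_{i,j=1}^{k}\subset M$ with $e_{i,i}=p_i$. Define the hermitian elements $g=\sum_{i=1}^{k-1}(e_{i,i+1}+e_{i+1,i})$ and $d=\sum_{i=1}^{k}i\,p_i$; since $d$ has simple spectrum, $W^*(d)$ is the linear span of $p_1,\dots,p_k$, and the identity $p_a g p_{a+1}=e_{a,a+1}$ then gives $\Mat{k}\subseteq W^*(g,d)$.

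Let $S=\{(a,b,l):p_ax_lp_b\neq 0\}$, so that $|S|=k^2\I(X;P)<k^2c$. A hermitian element of $M$ is determined by its diagonal entries together with an arbitrary choice of entry in each of the $\binom{k}{2}$ off-diagonal ``slot-pairs'' $\{(\alpha,\beta),(\beta,\alpha)\}$, $\alpha\neq\beta$. Set $h_{n+1}:=d$, and in $h_1$ reserve the $k-1$ sub- and superdiagonal slot-pairs for $g$, so that $\binom{k-1}{2}$ free slot-pairs remain in $h_1$ and $\binom{k}{2}$ are free in each of $h_2,\dots,h_n$, giving a total of $(k-1)(nk-2)/2$ available slots. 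For any $k>(n+2)/(n-2c)$ this total strictly exceeds $k^2c\geq|S|$, so I fix an injection $\phi$ from $S$ into the available slots. For each $(a,b,l)\in S$ with $\phi(a,b,l)=(r,\{(\alpha,\beta),(\beta,\alpha)\})$ and $\alpha<\beta$, add the self-adjoint contribution $e_{\alpha,a}x_le_{b,\beta}+e_{\beta,b}x_l^*e_{a,\alpha}$ to $h_r$; finally include $g$ as an extra summand of $h_1$. All $h_1,\dots,h_{n+1}$ are then hermitian.

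To verify that $W^*(h_1,\dots,h_{n+1})=M$, note first that the spectral projections of $h_{n+1}=d$ recover each $p_i$. Since no piece in $h_1$ occupies a sub- or superdiagonal slot, $p_ah_1p_{a+1}=p_agp_{a+1}=e_{a,a+1}$, so together with the projections $p_i$ all matrix units lie in the generated algebra. With matrix units in hand, a short computation shows $e_{a,\alpha}h_re_{\beta,b}=p_ax_lp_b$ for each piece (contributions from other slots, from the forced adjoint at $(\beta,\alpha)$, and from the $g$-summand when $r=1$ all vanish by orthogonality of matrix unit positions and by the choice $|\alpha-\beta|\geq2$ for piece slots in $h_1$). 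Summing over $(a,b)$ recovers each $x_l$, and since $W^*(X)=M$ the proof is complete. The main technical hurdle is the combinatorial bookkeeping guaranteeing existence of $\phi$: the strict inequality $\Gen{M}<n/2$, combined with the flexibility of Lemma~\ref{Prelim.Diagonalisable} to take $k$ arbitrarily large, provides exactly enough slack after sacrificing $k-1$ slot-pairs to $g$.
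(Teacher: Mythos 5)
Your proof is correct and follows essentially the same combinatorial packing argument as the paper's, down to the identical count $\frac{nk^2}{2}-\frac{(n+2)k}{2}+1$ of available slot-pairs versus $|T|<ck^2$. The only (cosmetic) difference is in how the two hermitian generators of $\Mat{k}$ are absorbed: the paper reserves the $k-1$ slot-pairs of $y_n$ involving index $k$ so that $e_{k,k}$ can be folded in via $y_n+\lambda e_{k,k}$, keeping the tridiagonal element as the extra generator, whereas you reserve the $k-1$ superdiagonal slot-pairs of $h_1$ for the tridiagonal element and take the diagonal element $d=\sum_i i\,p_i$ as the extra generator.
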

\begin{proof}
Suppose that $\Gen{M}<n/2$ for some $n\in\mathbb N$. There exists $k_0\in\mathbb N$ such that
$$
\Gen{M}<\frac{n}{2}-\left(\frac{n+2}{2k}-\frac{1}{k^2}\right)
$$
for all $k\geq k_0$. By Lemma \ref{Prelim.Diagonalisable}, there is a finite set $X=\{x_1,\dots,x_m\}$ of generators for $M$, some $k\geq k_0$ and a diagonalisable family $P=\{e_1,\dots,e_k\}\in\Peq$ such that
\begin{equation}\label{Number.Hermitians.1}
\I(X;P)<\frac{n}{2}-\left(\frac{n+2}{2k}-\frac{1}{k^2}\right).
\end{equation}
Choose a set of matrix units $(e_{i,j})_{i,j=1}^k$ in $M$ with $e_{i,i}=e_i$ for each $i$.  Define a set of triples
\begin{equation}\label{Number.Hermitians.20}
T=\Set{(i,j,l)|1\leq i,j\leq k,\quad 1\leq l\leq m,\quad e_ix_le_j\neq 0}
\end{equation}
so the definition of $\I(X;P)$ gives 
\begin{equation}\label{Number.Hermitians.3}
\I(X;P)=k^{-2}|T|.
\end{equation}

For each $r=1,\dots,n-1$, let $S_r$ be the set of triples $(s,t,r)$ with $1\leq s<t\leq k$ and let $S_n$ be the set of triples $(s,t,n)$ with $1\leq s<t\leq k-1$.  Then
$$
\left|\bigcup_{q=1}^nS_q\right|=\frac{(n-1)k(k-1)}{2}+\frac{(k-1)(k-2)}{2}=\frac{nk^2}{2}-\frac{(n+2)k}{2}+1.
$$
By the choice of $k\geq k_0$, 
$$
|T|<\frac{nk^2}{2}-\frac{(n+2)k}{2}+1=\left|\bigcup_{r=1}^{n}S_r\right|,
$$ 
from (\ref{Number.Hermitians.1}) and (\ref{Number.Hermitians.3}).

Decompose $T$ into a partition $\bigcup_{r=1}^{n}T_r$ with each $|T_r|\leq|S_r|$ and find, for each $r$, injections $T_r\rightarrow S_r$, written as $(i,j,l)\mapsto(s(i,j,l),t(i,j,l),r)$.  This really  defines $n$ maps indexed by $r$, but as the domains $T_r$ and ranges $S_r$ are disjoint, these are regarded as a single map from $T=\bigcup T_r$ to $\bigcup S_r$. Define self--adjoint operators
$$
y_r=\sum_{(i,j,l)\in T_r}(e_{s(i,j,l),i}x_le_{j,t(i,j,l)}+e_{t(i,j,l),j}x_l^*e_{i,s(i,j,l)})
$$
for $1\leq r\leq n$.

If $1\leq r\leq n$ and $(i_1,j_1,l_1)\in T_r\subset T$, then $(s(i_1,j_1,l_1),t(i_1,j_1,l_1))$ appears exactly once in the set
$$
\Set{(s,t),(t,s)|(s,t,r)\in S_r}
$$
as $S_r$ is disjoint from its transpose on the first two variables and the map $T_r\rightarrow S_r$ is an injection.  For such $(i_1,j_1,l_1)\in T_r$,
\begin{align}
e_{i_1,s(i_1,j_1,l_1)}y_re_{t(i_1,j_1,l_1),j_1}&=e_{i_1,s(i_1,j_1,l_1)}e_{s(i_1,j_1,l_1),i_1}x_{l_1}e_{j_1,t(i_1,j_1,l_1)}e_{t(i_1,j_1,l_1),j_1}\nonumber\\
&=e_{i_1}x_{l_1}e_{j_1}\neq 0.\label{Number.Hermitians.10}
\end{align}

As $T=\bigcup_{r=1}^nT_r$, equation (\ref{Number.Hermitians.10}) and the definition of $T$ in (\ref{Number.Hermitians.20}) imply that
\begin{align*}
x_l=\sum_{(i,j,l)\in T}e_{i,i}x_le_{j,j}=\sum_{r=1}^n\sum_{(i,j,l)\in T_r}e_{i,s(i,j,l)}y_re_{t(i,j,l),j)},
\end{align*}
for each $l=1,\dots, m$.  Thus the set $\{y_1,\dots,y_n\}\cup\Set{e_{i,j}|1\leq i,j\leq k}$  generates the \IIi factor $M$.

Finally note that $e_{k,k}y_n=y_ne_{k,k}=0$ so that  $y_n,e_{k,k}\in W^*(\{y_n+\lambda e_{k,k}\})$ for any $\lambda > \|y_n\|$ by the spectral theorem.  In this way $M$ is generated by the $n+1$ hermitian elements
$$
y_1,\dots,y_{n-1},\quad y_n+\lambda e_{k,k}, \text{ and } \sum_{i=1}^{k-1}(e_{i,i+1}+e_{i+1,i}),
$$
as required.
\end{proof}

\begin{remark}\label{Number.Remark}
In Corollary \ref{NGenerators}, it will be shown that if $M$ is generated by $n+1$ hermitian elements, then $\Gen{M}\leq n/2$, which almost gives a converse to this result.  The remaining gap is summarised in the following question.
\end{remark}

\begin{question}
Let $M$ be a \IIi factor.  If $\Gen{M}=n/2$ for some $n\in\mathbb N$, is $M$ generated by $n+1$ hermitians?
\end{question}

\section{A scaling formula}\label{Scaling}
This section examines the behaviour of $\Gen{M}$ under amplifications and  compressions. Theorem \ref{Scaling.Formula} establishes that
\begin{equation}\label{Scaling.1}
\Gen{M_t}=t^{-2}\Gen{M}
\end{equation}
for all \IIi factors $M$ and $t>0$.  Note that it is a consequence of equation (\ref{Scaling.1}) that $M$ is finitely generated if and only if $M_t$ is finitely generated for all $t>0$.  This result is deduced in Lemma \ref{Scaling.FiniteGen} from the lemmas that  will be needed to prove equation (\ref{Scaling.1}).

\begin{lemma}\label{Scaling.Second}
Let $M$ be a separable \IIi factor and let $n\in\mathbb N$.  Then
$$
\Gen{M_{n^{-1}}}\leq n^2\Gen{M}.
$$
\end{lemma}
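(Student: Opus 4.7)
The plan is to transfer a near-optimal generating set for $M$ to a generating set for a compression $pMp\cong M_{n^{-1}}$, with $\I$ increasing by at most a factor of $n^2$, using matrix-unit technology. Fix $\eps>0$ and choose a finite generating set $X=\{x_1,\dots,x_m\}$ for $M$ with $\I_M(X)<\Gen{M}+\eps$. Applying Lemma \ref{Prelim.Diagonalisable} with parameter $n$, for $k$ sufficiently large I obtain $Q=\{q_1,\dots,q_{nk}\}\in\Peq(M)$ with $\I_M(X;Q)<\I_M(X)+\eps$, equipped with matrix units $(f_{\alpha,\beta})_{\alpha,\beta=1}^{nk}$ satisfying $f_{\alpha,\alpha}=q_\alpha$. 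Grouping the indices into $n$ consecutive blocks of size $k$, set
\begin{equation*}
e_i=\sum_{\alpha=1}^{k}f_{(i-1)k+\alpha,\,(i-1)k+\alpha},\qquad e_{i,j}=\sum_{\alpha=1}^{k}f_{(i-1)k+\alpha,\,(j-1)k+\alpha},
\end{equation*}
so that $P=\{e_1,\dots,e_n\}\in\Peq(M)$ is coarser than $Q$ and $(e_{i,j})$ is a system of matrix units for $P$. Putting $p=e_1$ (of trace $1/n$) and $Q'=\{q_1,\dots,q_k\}\subseteq pMp$, the construction satisfies the translate relation $q_{(i-1)k+\alpha}=e_{i,1}q_\alpha e_{1,i}$, which is the engine of the argument.

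Next, take $Y=\{e_{1,i}x_le_{j,1}:1\leq i,j\leq n,\ 1\leq l\leq m\}$ as the candidate generating set for $pMp$. To confirm $W^*(Y)=pMp$, use the isomorphism $M\cong\Mat{n}\otimes pMp$ induced by the $e_{i,j}$, under which each $e_{i,j}$ becomes a standard matrix unit of $\Mat{n}$ tensored with $p$. Since $x_l=\sum_{i,j}e_{i,1}(e_{1,i}x_le_{j,1})e_{1,j}$ for each $l$, the algebra $W^*(Y\cup\{e_{i,j}\})$ contains $X$ and hence equals $M$; writing $A=W^*(Y)\subseteq pMp$ we obtain the containment $M\subseteq\Mat{n}\otimes(A\oplus\mathbb{C}(p-1_A))$, and the fact that $M$ is a \IIi factor (so that $(p-1_A)M(p-1_A)$ cannot be one-dimensional when $1_A<p$) forces $1_A=p$ and $A=pMp$.

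Finally, compare the $\I$-values. With $\tr_{pMp}(q_\alpha)=1/k$ and $\tr_M(q_\gamma)=1/(nk)$, the translate relation yields the bijection
\begin{equation*}
q_\alpha(e_{1,i}x_le_{j,1})q_\beta\neq 0\iff q_{(i-1)k+\alpha}\,x_l\,q_{(j-1)k+\beta}\neq 0,
\end{equation*}
matching the nonzero $(l,i,j,\alpha,\beta)$-tuples in the sum defining $\I_{pMp}(Y;Q')$ with the nonzero $(l,\gamma,\delta)$-tuples in the sum defining $\I_M(X;Q)$ via $\gamma=(i-1)k+\alpha$ and $\delta=(j-1)k+\beta$. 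Each nonzero term contributes $(1/k)^2$ to the former and $(1/(nk))^2$ to the latter, so $\I_{pMp}(Y;Q')=n^2\I_M(X;Q)$ and hence $\Gen{M_{n^{-1}}}\leq n^2(\Gen{M}+2\eps)$; letting $\eps\to 0$ yields the result. The main obstacle is the rigorous verification that $W^*(Y)=pMp$, where care is needed because $W^*(Y)$ need not contain the identity of $pMp$ a priori; the counting in the last step is then mechanical.
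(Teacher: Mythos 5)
Your proof is correct and follows essentially the same route as the paper: a near-optimal equal-trace family of $nk$ projections, block matrix units giving a corner $p$ of trace $1/n$, the generating set $\{e_{1,i}x_le_{j,1}\}$ for $pMp$, and the $n^2$ trace-rescaling count. The only cosmetic difference is that you justify $W^*(Y)=pMp$ via the tensor decomposition $M\cong\Mat{n}\otimes pMp$ and a unit argument, whereas the paper uses the inductive identity $fxyf=\sum_r fxf_{r,1}f_{1,r}yf$; both are valid.
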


\begin{proof}
Assume that $\Gen{M}<\infty$ as otherwise the lemma is vacuous.  Let $\eps>0$. By Lemma \ref{Prelim.Diagonalisable}, there is a finite generating set $X$ for $M$, some $k\in\mathbb N$ and a diagonalisable $P=\{e_1,\dots,e_{nk}\}\in\Peq$ such that $\I(X;P)<\Gen{M}+\eps$.  Find matrix units $(e_{i,j})_{i,j=1}^{nk}$ in $M$ with $e_{i,i}=e_i$.  Define 
$$
f_{r,s}=\sum_{i=1}^ke_{(r-1)k+i,(s-1)k+i}
$$
for $r,s=1,\dots,n$.  The family $(f_{r,s})_{r,s=1}^n$ is a system of matrix units in $M$.  Consider $f=f_{1,1}$, a projection of trace $n^{-1}$ so $fMf$ is a representative of $M_{n^{-1}}$.

The von Neumann algebra $fMf$ is generated by $\bigcup_{r,s=1}^nf_{1,r}Xf_{s,1}$ using induction on the equation
$$
fxyf=\sum_{r=1}^nf_{1,1}xf_{r,1}f_{1,r}yf_{1,1},
$$
see for example \cite[Lemma 5.2.1]{Dykema.FreeBook}.  Consider $Q=\{e_1,\dots,e_k\}$, a family of orthogonal projections in $fMf$ with sum $f$, so $Q$ is a diagonalisable element of $\Peq(fMf)$.  For $x\in X$, $i,j=1,\dots,k$ and $r,s=1,\dots,n$ the relation
$e_if_{1,r}xf_{s,1}e_j\neq 0$ is equivalent to $e_{(r-1)k+i}xe_{(s-1)k+j}\neq 0$
since 
$$
e_if_{1,r}xf_{s,1}e_j=e_{i,(r-1)k+i}xe_{(s-1)k+j,j}.
$$
Let $C$ be the number of quintuples $(i,j,r,s,x)$ with this property.

Since each projection in $Q$ has trace $k^{-1}$ in $fMf$, it follows that
$$
\I_{fMf}(\bigcup_{r,s=1}^nf_{1,r}Xf_{s,1};Q)=C\frac{1}{k^2},
$$
whereas
$$
\I_M(X;P)=C\frac{1}{n^2k^2}.
$$
Therefore
\begin{align*}
\Gen{M_{n^{-1}}}&=\Gen{fMf}\\&\leq \I_{fMf}\left(\bigcup_{r,s=1}^nf_{1,r}Xf_{s,1};Q\right)\\
&=n^2\I_M(X;P)\leq n^2\Gen{M}+n^2\eps,
\end{align*}
from which the lemma follows.
\end{proof}

\begin{remark}\label{Scaling.Second.Remark}
It does not appear to be possible to obtain 
\begin{equation}\label{Scaling.Second.Remark.1}
\GenSA{M_{n^{-1}}}\leq n^2\GenSA{M}
\end{equation}
using the methods of Lemma \ref{Scaling.Second}, since  a self--adjoint generating set $X$ leads to a generating set $\bigcup_{r,s=1}^nf_{1,r}Xf_{s,1}$  for $fMf$ which is not necessarily self--adjoint.  Nevertheless inequality (\ref{Scaling.Second.Remark.1}) is true, as will be established in Corollary \ref{SelfAd.Scaling} from the scaling formula of Theorem \ref{Scaling.Formula} and Theorem \ref{SelfAd.Main}.
\end{remark}

The next lemma provides one inequality in (\ref{Scaling.1}) and also the parallel inequality for the hermitian--generator invariant.
\begin{lemma}\label{Scaling.First}
Let $M$ be a separable \IIi factor and $0<t<1$.  Then
$$
\Gen{M}\leq t^2\Gen{M_t}\quad\textrm{and}\quad\GenSA{M}\leq t^2\GenSA{M_t}.
$$
\end{lemma}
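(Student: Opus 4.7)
Write $M_t = pMp$ for a projection $p \in M$ of trace $t$. The plan is to take a near-optimal generating set for $pMp$ and enlarge it by a few partial isometries covering the off-diagonal corners of the decomposition $1 = p + (1-p)$; the extra isometries will contribute an $\I$-term that vanishes in the limit. Concretely, fix $\eps > 0$. By Lemma~\ref{Prelim.Diagonalisable} applied inside $pMp$, choose a finite generating set $Y$ of $pMp$ and $Q = \{q_1, \dots, q_k\} \in \Peq(pMp)$ with $k$ arbitrarily large such that $\I_{pMp}(Y; Q) \leq \Gen{pMp} + \eps$; note $\tr_M(q_i) = t/k$. Set $n = \lfloor k(1-t)/t \rfloor$, choose pairwise orthogonal projections $r_1, \dots, r_n \leq 1 - p$ each of trace $t/k$, and put $r_0 = 1 - p - \sum_{j=1}^n r_j$, a projection with $0 \leq \tr(r_0) < t/k$. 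Since $M$ is a \IIi factor, choose partial isometries $v_j \in M$ with $v_j^* v_j = q_1$ and $v_j v_j^* = r_j$ for $j = 1, \dots, n$, and $v_0 \in M$ with $v_0^* v_0 = r_0$ and $v_0 v_0^* \leq q_1$. Let $X = Y \cup \{v_0, v_1, \dots, v_n\}$ (omit $v_0$ if $r_0 = 0$) and $P = Q \cup \{r_0, r_1, \dots, r_n\} \in \P(M)$.

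First I would check that $W^*(X) = M$: the $v_j$'s, combined with the matrix units in $pMp$ available from $Y$, supply partial isometries between any two elements of $P$, reducing every $aMb$ with $a, b \in P$ to a corner of $pMp = W^*(Y)$. Then I would compute $\I_M(X; P)$ directly. For $y \in Y \subseteq pMp$, only pairs $(q_i, q_j) \in Q^2$ contribute and the rescaling $\tr_M = t\,\tr_{pMp}$ on $pMp$ gives $\I_M(y; P) = t^2 \I_{pMp}(y; Q)$. For each $j \geq 1$, the only pair with $a v_j b \neq 0$ is $(r_j, q_1)$, so $\I_M(v_j; P) = (t/k)^2$; similarly $\I_M(v_0; P) = (t/k)\tr(r_0)$. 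Summing and using $\sum_{j \geq 0} \tr(r_j) = 1-t$ gives
\[
\I_M(X; P) \leq t^2 \I_{pMp}(Y; Q) + \frac{t(1-t)}{k} \leq t^2 \Gen{pMp} + t^2\eps + \frac{t(1-t)}{k}.
\]
Letting $k \to \infty$ and then $\eps \to 0$ delivers $\Gen{M} \leq t^2 \Gen{M_t}$.

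For the hermitian statement, take $Y \subseteq (pMp)_{\textrm{sa}}$ and replace each $v_j$ by $h_j = v_j + v_j^*$. The set $X_{\textrm{sa}} = Y \cup \{h_0, h_1, \dots, h_n\}$ still generates $M$: since $q_1 \in W^*(Y)$ and $q_1 v_j = 0$ for $j \geq 1$, one has $q_1 h_j = v_j^*$, recovering $v_j$ and $r_j = v_j v_j^*$; the projection $r_0 = 1 - p - \sum_{j \geq 1} r_j$ is then available, and $h_0 r_0 = v_0$. Using \eqref{Prelim.RemarksI.1} together with item~\ref{Prelim.RemarksI.Star} of Remarks~\ref{Prelim.RemarksI} gives $\I_M(h_j; P) \leq 2 \I_M(v_j; P)$, so
\[
\I_M(X_{\textrm{sa}}; P) \leq t^2 \I_{pMp}(Y; Q) + \frac{2t(1-t)}{k},
\]
and the same limiting argument produces $\GenSA{M} \leq t^2 \GenSA{M_t}$. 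The only real place requiring care is the generation argument for $X_{\textrm{sa}}$ when $r_0 \neq 0$ (which occurs precisely when $k(1-t)/t$ is not an integer); the key trick is to recover $r_0$ from the already-constructed generators before using $h_0$ to extract $v_0$.
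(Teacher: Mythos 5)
Your proof is correct and follows essentially the same route as the paper's: a near-optimal generating set and partition for $pMp$, augmented by partial isometries carrying $1-p$ onto subprojections of $p$, whose contribution to $\I$ is made negligible. The only differences are bookkeeping: you use many small partial isometries matched to the equal-trace partition $Q$ (so their total contribution is $t(1-t)/k$ automatically), where the paper uses roughly $1/t$ large ones and must then refine the partition to kill their contribution; and in the hermitian case you keep only the real parts $h_j=v_j+v_j^*$, recovering $v_j^*=q_1h_j$ by multiplication, where the paper adjoins both $v_j+v_j^*$ and $i(v_j-v_j^*)$. One microscopic caveat: writing $r_0=1-p-\sum_{j\ge1}r_j$ presupposes $1\in W^*(X_{\mathrm{sa}})$, which the paper's conventions do not grant a priori, but the same computation as for $j\ge1$ gives $v_0=q_1h_0$ directly and hence $r_0=v_0^*v_0$, so nothing is lost.
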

\begin{proof}
Assume that $M_t$ is finitely generated, otherwise there is nothing to prove.  Fix a projection $p\in M$ of trace $t$ so that $pMp$ is a representative of $M_t$ and let $X$ be an arbitrary finite set of generators for $pMp$.

For $\eps>0$, find orthogonal projections $E=\{e_1,\dots,e_n\}\in\P_{pMp}$ such that
\begin{equation}\label{Scaling.First.1}
\I_{pMp}(X;E)=t^{-2}\sum_{x\in X}\sum_{e_ixe_j\neq 0}\tr_M(e_i)\tr_M(e_j)<\I_{pMp}(X)+\eps.
\end{equation}
The factor $t^{-2}$ arises in (\ref{Scaling.First.1}) as $\tr_{M_t}(y)=t^{-1}\tr_{M}(y)$ for $y\in M_t$. 
Let $m$ be the maximal integer such that $mt\leq1$ and find a family of orthogonal projections $p_1,\dots,p_{m+1}$ in $M$ such that:
\begin{enumerate}[i.]
\item $p_1=p$;
\item $\tr(p_i)=\tr(p)$, for $i=2,\dots,m$;
\item $\sum_{i=1}^{m+1}p_i=1$.
\end{enumerate}
In this way $0\leq\tr(p_{m+1})<\tr(p)$ with $p_{m+1}$ possibly the zero projection.  Let $v_1=p_1$ and find partial isometries $v_2,\dots,v_{m+1}\in M$ such that: 
\begin{enumerate}
\item $v_iv_i^*=p_i$, for each $i=2,\dots,m+1$;
\item $v_i^*v_i=p_1$, for $i=2,\dots,m$;
\item $v_{m+1}^*v_{m+1}$ is a subprojection of $p_1$ of the form $\sum_{j=1}^{k-1}e_j+\widetilde{e_{k+1}}$ for some $k\in \{1,\ldots,n\}$ and some $\widetilde{e_k}\leq e_k$.
\end{enumerate}
It will now be shown that
\begin{equation}\label{Scaling.First.10}
Y=X\cup\{v_2,\dots,v_{m+1}\}
\end{equation}
generates the \IIi factor $M$. Since $X$ generates $pMp$, it follows that $pMp\subset
W^*(Y)$. The relations
$$
p_iMp_j=v_iv_i^*Mv_jv_j^*=v_ipv_i^*Mv_jpv_j^*,
$$
for $1\leq i,j\leq m+1$, then imply that
$$
p_iMp_j\subset v_ipMpv_j^*\subset v_iW^*(Y)v_j^*\subset
W^*(Y),
$$
and so $Y$ generates $M$.

The final step is to estimate $\I(Y)$. Let $F$ be a set of mutually orthogonal projections with sum $p$ that refines $\{e_1,\dots,e_{k-1},\widetilde{e_k},e_k-\widetilde{e_k},e_{k+1},\dots,e_n\}$ such that
\begin{equation}\label{nowneeded}
\max_{f\in F}\tr(f)<\frac{\eps}{m}.
\end{equation}
Thus $F\in\P_{pMp}$ and $F\refines E$.  Extend $F$ to a family
$$
G=\Set{v_ifv_i^*|1\leq i\leq m+1,\ f\in F}
$$
of projections.  Then $G\supset F$ and the sum of the orthogonal projections in $G$ is $1$ so $G\in\P_M$.  Fix $i\in \{2,\dots,m+1\}$ and note that if $v_jf_1v_j^*v_iv_kf_2v_k^*\neq 0$ for some $j,k$ and $f_1,f_2\in F$, then $j=i$ (as otherwise $v_j^*v_i=0$) and  $k=1$ (as otherwise $v_iv_k=0$); when these conditions hold $f_1=f_2$ (as $f_1v_j^*v_iv_kf_2=f_1p_1f_2=f_1f_2$).  Thus
\begin{equation}\label{Scaling.First.2}
\I_M(v_i;G)\leq \sum_{f\in F}\tr(f)^2\leq \max_{f\in F}\tr(f)<\frac{\eps}{m},
\end{equation}
and Lemma \ref{Prelim.Refine} and (\ref{Scaling.First.1}) imply that
\begin{align}\label{Scaling.First.12}
\I_M(X;G)=\I_M(X;F)&\leq\I_M(X;E)\nonumber\\
&=\sum_{\substack{e_1,e_2\in E,x\in X\\e_1xe_2\neq 0}}\tr(e_1)\tr(e_2)<t^2\I_{pMp}(X)+\eps t^2.
\end{align}
Finally
\begin{align}
\mathcal G(M)&\leq \I_M(Y;G)=\I_M(X;F)+\sum_{i=2}^{m+1}\I_M(v_i;G)\notag \\
\label{Scaling.First.12a} &\leq\I_M(X;E)+\eps<t^2\I_{pMp}(X)+\eps t^2+\eps.
\end{align}
The inequality $\Gen{M}\leq t^2\Gen{M_t}$ then follows, as $\eps >0$ was arbitrary and $X$ was an arbitrary finite generating set for $pMp$.

Only minor modifications are necessary for the hermitian case. Assume that each element of $X$ is self-adjoint, and replace the  partial isometries $v_i$ in \eqref{Scaling.First.10} by the self-adjoint elements $v_i+v_i^*$ and  $i(v_i-v_i^*)$. Then the same arguments lead to the inequality
\begin{align*}
\mathcal G_{{\text{sa}}}(M)<t^2\I_{pMp}(X)+\eps t^2+4\eps,
\end{align*}
for any finite generating set $X$ consisting of self-adjoint elements. The factor of 4, which was not present in the counterpart inequality \eqref{Scaling.First.12a}, arises from the estimate
$$
\sum_{i=2}^{m+1}\I_M(v_i+v_i^*;G)+\sum_{i=2}^{m+1}\I_M(i(v_i-v_i^*);G)\leq 4\sum_{i=2}^{m+1}\I_M(v_i;G)<4\eps.
$$
This change reflects the replacement of $v_i$ by $v_i+v_i^*$ and $i(v_i-v_i^*)$ in the generating set $Y$ of \eqref{Scaling.First.10}.
\end{proof}

\begin{lemma}\label{Scaling.FiniteGen}
Let $M$ be a \IIi factor and let $t>0$.  Then $M_t$ is finitely generated if, and only if, $M$ is finitely generated.
\end{lemma}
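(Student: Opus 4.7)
The plan is to reduce the biconditional to a single assertion: if $N$ is a finitely generated $\mathrm{II}_1$ factor and $q \in N$ is a non-zero projection, then the corner $qNq$ is finitely generated. Granting this corner claim, the lemma follows quickly. Given any $t > 0$, fix an integer $n > t$, so that $M_t$ may be realised as $p(\mathbb{M}_n \otimes M)p$ for some projection $p$ of trace $t/n < 1$. If $M$ is generated by a finite set $X$, then $\mathbb{M}_n \otimes M$ is generated by $X \cup \{e_{i,j} \otimes 1 : 1 \leq i, j \leq n\}$, and the corner claim applied to $\mathbb{M}_n \otimes M$ and $p$ yields that $M_t$ is finitely generated. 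Conversely, if $M_t$ is finitely generated then applying the same implication to $M_t$ with amplification parameter $1/t$ gives that $(M_t)_{1/t} = M$ is finitely generated.

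For the corner claim I would reuse the partial isometry construction employed in the proof of Lemma \ref{Scaling.First}. Let $Y$ be a finite generating set for $N$, let $s = \tr(q)$, and let $k$ be the largest integer with $ks \leq 1$. Choose mutually orthogonal projections $q_1 = q, q_2, \ldots, q_{k+1}$ summing to $1$ with $\tr(q_i) = s$ for $i \leq k$ and $0 \leq \tr(q_{k+1}) < s$, and select partial isometries $w_1 = q$ together with $w_2, \ldots, w_{k+1} \in N$ satisfying $w_i w_i^* = q_i$ and $w_i^* w_i \leq q$. I then claim that the finite set
\[
Z = \{ w_i^* y w_j : y \in Y,\ 1 \leq i, j \leq k+1 \} \subset qNq
\]
generates $qNq$ as a von Neumann algebra.

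To verify this, for any word $u_1 u_2 \cdots u_m$ in $Y \cup Y^*$, inserting the identity $1 = \sum_i w_i w_i^*$ between consecutive letters and using $w_1 = w_1^* = q$ at the boundary produces the expansion
\[
q u_1 u_2 \cdots u_m q = \sum_{i_1, \ldots, i_{m-1}} (w_1^* u_1 w_{i_1})(w_{i_1}^* u_2 w_{i_2}) \cdots (w_{i_{m-1}}^* u_m w_1),
\]
in which each factor lies in $Z$ when the corresponding letter is in $Y$, and in $Z^*$ when it is in $Y^*$. Therefore $W^*(Z)$ contains every element of the form $q w q$ for $w$ a $*$-polynomial in $Y$, and since such elements are weakly dense in $qNq$, this gives $qNq \subseteq W^*(Z)$; the reverse containment is immediate since $Z \subset qNq$.

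The principal obstacle is the corner claim itself, but with the partial isometry setup already present in the proof of Lemma \ref{Scaling.First} the remaining work is a bookkeeping exercise in word expansions, and existence of the required partial isometries follows from standard comparison of projections in the $\mathrm{II}_1$ factor $N$.
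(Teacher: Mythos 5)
Your argument is correct, but it takes a different route from the paper. The paper's own proof of this lemma is purely formal: after reducing to $0<t<1$, it observes that Lemma \ref{Scaling.First} already gives the implication ``$M_t$ finitely generated $\Rightarrow$ $M$ finitely generated'', and for the converse it picks $n$ with $n^{-1}<t$, uses Lemma \ref{Scaling.Second} to get that $M_{n^{-1}}$ is finitely generated, and then applies Lemma \ref{Scaling.First} a second time to the compression $(M_t)_s=M_{n^{-1}}$ with $s=n^{-1}t^{-1}\in(0,1)$ to climb back up to $M_t$. In other words, the paper never proves that an arbitrary corner of a finitely generated factor is finitely generated; it only has the integer-compression case (Lemma \ref{Scaling.Second}) and routes around the general case. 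You instead prove the general corner claim directly --- for any non-zero projection $q$ in a finitely generated $\mathrm{II}_1$ factor $N$, the set $\{w_i^*yw_j\}$ generates $qNq$ via the word-expansion with $1=\sum_i w_iw_i^*$ --- and then both directions of the lemma drop out by tensoring with matrix units and compressing. Your expansion argument is sound (each factor $w_i^*uw_j$ lies in $qNq$ because $w_i^*w_i\le q$, and weak density of $q\mathcal{A}q$ in $qNq$ for $\mathcal{A}$ the $*$-algebra generated by $Y$ finishes it), and the required partial isometries exist by comparison of projections in a factor. What your approach buys is a self-contained, purely qualitative statement (arbitrary corners of finitely generated $\mathrm{II}_1$ factors are finitely generated) that is strictly more general than Lemma \ref{Scaling.Second}'s qualitative content; what it loses is the quantitative bookkeeping --- your generating set $Z$ is not organised relative to a family of projections in a way that tracks $\I(\cdot)$, so it would not substitute for Lemmas \ref{Scaling.Second} and \ref{Scaling.First} in the proof of the scaling formula itself, whereas the paper's two-lemma detour reuses exactly the estimates needed there.
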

\begin{proof}
Without loss of generality, suppose that $0<t<1$, otherwise replace $t$ by $t^{-1}$.  Lemma \ref{Scaling.First} shows that if $M_t$ is finitely generated so too is $M$.  Conversely, if $M$ is finitely generated, choose $n\in\mathbb N$ with $n^{-1}<t$.  Lemma \ref{Scaling.Second} shows that $M_{n^{-1}}$ is finitely generated. Let $s=n^{-1}t^{-1}$.  Since $0<s<1$ and $(M_t)_s=M_{n^{-1}}$ is finitely generated, another application of Lemma \ref{Scaling.First} shows that $M_t$ is finitely generated.
\end{proof}

\begin{theorem}[The Scaling Formula]\label{Scaling.Formula}
Let $M$ be a separable \IIi factor.  For each $t>0$, 
\begin{equation}\label{Scaling.Formula.1}
\Gen{M_t}=t^{-2}\Gen{M}.
\end{equation}
\end{theorem}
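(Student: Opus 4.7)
The plan is to deduce the Scaling Formula from the two preceding lemmas, exploiting the semigroup law $(M_s)_u = M_{su}$ for amplifications. If $M$ is not finitely generated, then neither is $M_t$ by Lemma \ref{Scaling.FiniteGen}, so both sides of (\ref{Scaling.Formula.1}) equal $\infty$; assume henceforth that $M$ is finitely generated.

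The main step is to prove the single inequality $\Gen{M_t}\leq t^{-2}\Gen{M}$ for every $t>0$, after which the reverse inequality will come for free. Given $t>0$, choose $n\in\mathbb N$ with $nt>1$, and set $s=(nt)^{-1}\in(0,1)$. Applying Lemma \ref{Scaling.First} to the \IIi factor $M_t$ (in place of $M$) with compression parameter $s$ and using $(M_t)_s=M_{ts}=M_{1/n}$ yields
\[
\Gen{M_t}\leq s^2\Gen{(M_t)_s}=s^2\Gen{M_{1/n}}.
\]
Lemma \ref{Scaling.Second} then supplies $\Gen{M_{1/n}}\leq n^2\Gen{M}$, and combining gives
\[
\Gen{M_t}\leq s^2 n^2\Gen{M}=\frac{n^2}{(nt)^2}\Gen{M}=t^{-2}\Gen{M}.
\]

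For the reverse inequality, apply the estimate just proved to the \IIi factor $M_t$ with the new amplification parameter $1/t$. Since $(M_t)_{1/t}=M$, this yields
\[
\Gen{M}=\Gen{(M_t)_{1/t}}\leq t^2\Gen{M_t},
\]
which rearranges to $\Gen{M_t}\geq t^{-2}\Gen{M}$, completing the proof.

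No serious obstacle is anticipated; the one detail worth flagging is that Lemma \ref{Scaling.First} requires the compression parameter to lie \emph{strictly} in $(0,1)$, which is why $n$ is chosen so that $nt>1$ strictly, not merely $nt\geq 1$. The reverse inequality then follows automatically from the self-duality $M=(M_t)_{1/t}$ of amplification, a standard mechanism for invariants with multiplicative scaling.
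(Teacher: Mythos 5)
Your proof is correct. It uses the same two ingredients as the paper --- Lemma \ref{Scaling.Second} for the compressions $M_{1/n}$ and Lemma \ref{Scaling.First} in the form $\Gen{N}\leq s^2\Gen{N_s}$ for $0<s<1$ --- together with the semigroup law $(M_s)_u\cong M_{su}$, but it assembles them more efficiently. The paper first combines the two lemmas to get the exact identity $\Gen{M_{1/n}}=n^2\Gen{M}$, then bootstraps to rational $t$ by applying this twice, then handles arbitrary $t\in(0,1)$ by a squeeze (choose $0<s<1$ with $st\in\mathbb Q$ and sandwich $\Gen{M}\leq t^2\Gen{M_t}\leq s^2t^2\Gen{M_{st}}=\Gen{M}$), and finally treats $t>1$ as a separate case. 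Your route collapses all of this: compressing $M_t$ by the factor $s=(nt)^{-1}$ down to $M_{1/n}$ gives $\Gen{M_t}\leq t^{-2}\Gen{M}$ for every $t>0$ in a single step, and the reverse inequality is the same estimate applied to the pair $(M_t,\,1/t)$ via $(M_t)_{1/t}=M$. This bypasses the rational-approximation argument entirely and is, if anything, cleaner. One minor remark: your initial reduction to the finitely generated case is harmless but not strictly necessary, since both lemmas are stated (and hold vacuously) when the relevant invariant is infinite.
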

\begin{proof}
This formula will be established by considering successively the cases $t\in \mathbb{Q}$, $t\in (0,1)$ and $t\in (1,\infty)$. 

By Lemma \ref{Scaling.FiniteGen}, $M$ is infinitely generated if, and only if, $M_t$ is infinitely generated.  Assume then that both $M$ and $M_t$ are finitely generated. For $n\in\mathbb N$ and any separable \IIi factor $M$, the equation
\begin{equation}\label{Scaling.Formula.2}
\Gen{M_{n^{-1}}}=n^2\Gen{M}
\end{equation}
is a consequence of the inequalities of Lemma \ref{Scaling.Second} and Lemma  \ref{Scaling.First}.  Let $t=p/q$ be a rational and apply (\ref{Scaling.Formula.2}) twice.  This gives
$$
\Gen{M_{t^{-1}}}=\Gen{(M_q)_{p^{-1}}}=p^2\Gen{M_q}
$$
and
$$
\Gen{M}=\Gen{(M_q)_{q^{-1}}}=q^2\Gen{M_q}=\frac{q^2}{p^2}\Gen{M_{t^{-1}}}.
$$
This proves the theorem for rational $t$.

For arbitrary $0<t<1$, consider $0<s<1$ such that $st\in\mathbb Q$. Then
\begin{equation}\label{Scaling.Formula.3}
\Gen{M}\leq t^2\Gen{M_t}\leq s^2t^2\Gen{M_{st}}=\Gen{M}
\end{equation}
by applying Lemma \ref{Scaling.First} twice and the rational case above.  Hence $\Gen{M}=t^2\Gen{M_t}$. 

If $t>1$, then 
$$
\Gen{M}=\Gen{(M_t)_{t^{-1}}}=t^2\Gen{(M_t)_{t^{-1}t}}=t^2\Gen{M_t}.
$$
This deals with all possible cases and so completes the proof.
\end{proof}

\section{Consequences of scaling}
This section contains an initial collection of deductions from the scaling formula. Results regarding the free group factors and estimates involving free products are reserved to Section \ref{Free}. Note that there is currently no  example for which the hypothesis of the first corollary is known to hold. 
\begin{corollary}\label{Scaling.NonSingle}
If there exists a separable \IIi factor $M$ such that $\Gen{M}>0$, then there exist separable \IIi factors which are not singly generated.
\end{corollary}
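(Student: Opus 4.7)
The plan is to use the scaling formula \eqref{Scaling.Formula.1} to inflate the hypothetical strictly positive value of $\Gen{M}$ past the trivial upper bound $\Gen{N}\leq 1$ enjoyed by every singly generated von Neumann algebra, and then to exhibit a compression $M_t$ as the desired example.

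First I would observe that if $N$ is a separable diffuse finite von Neumann algebra with a single generator $x$, then taking $X=\{x\}$ in Definition \ref{Prelim.DefG} and using the basic estimate $\I(X;P)\leq|X|=1$ from Remarks \ref{Prelim.RemarksI} gives
\[
\Gen{N}\leq\I(X)\leq 1.
\]
Thus any singly generated \IIi factor $N$ satisfies $\Gen{N}\leq 1$.

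Next, suppose that a separable \IIi factor $M$ with $\Gen{M}>0$ exists. Choose $t>0$ with $t^2<\Gen{M}$. The amplification $M_t$ is again a separable \IIi factor, and the Scaling Formula \eqref{Scaling.Formula.1} yields
\[
\Gen{M_t}=t^{-2}\Gen{M}>1.
\]
Combined with the bound of the previous paragraph, this forces $M_t$ not to be singly generated, which is the conclusion. There is no real obstacle here beyond correctly invoking the scaling formula and the trivial bound $\I(\{x\})\leq 1$; the work was done already in Theorem \ref{Scaling.Formula}.
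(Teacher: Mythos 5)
Your proof is correct and follows essentially the same route as the paper: both use the trivial bound $\Gen{N}\leq\I(\{x\})\leq 1$ for a singly generated factor together with the Scaling Formula to make $\Gen{M_t}=t^{-2}\Gen{M}$ exceed $1$ for small $t$. (The paper treats $\Gen{M}=\infty$ as a separate, immediate case, but your choice of $t$ with $t^2<\Gen{M}$ covers it just as well.)
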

\begin{proof}
If the separable \IIi factor $M$ satisfies $\Gen{M}=\infty$, then this is already an example with no finite set of generators. Thus the additional assumption that $0<\Gen{M}<\infty$ can be made, in which case 
$$
\lim_{t\to 0+}\Gen{M_t}=\lim_{t\to 0+}t^{-2}\Gen{M}=\infty ,
$$
by Theorem \ref{Scaling.Formula}. It is immediate from the definition of $\Gen{\cdot}$ that any singly generated factor has $\Gen{\cdot}\leq 1$, and so $M_t$ is not singly generated for sufficiently small values of $t$.
\end{proof}

\begin{corollary}\label{Scaling.FundamentalGroup}
Any finitely generated \IIi factor $M$ with non--trivial fundamental group $\Fund{M}$ has $\Gen{M}=0$ and is singly generated.
\end{corollary}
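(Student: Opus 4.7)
The plan is to exploit the scaling formula (Theorem \ref{Scaling.Formula}) together with the generation theorem (Theorem \ref{Number.Hermitians}). Since $\Fund{M}$ is non-trivial, there exists $t>0$ with $t\neq 1$ such that $M_t\cong M$. The scaling formula gives
\[
\Gen{M}=\Gen{M_t}=t^{-2}\Gen{M}.
\]
Because $M$ is finitely generated, any finite generating set $X$ witnesses $\Gen{M}\leq\I(X)\leq|X|<\infty$ (using the first item of Remarks \ref{Prelim.RemarksI}), so $\Gen{M}$ is a finite non-negative real number. As $t^{-2}\neq 1$, the displayed equation forces $\Gen{M}=0$.

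Having established $\Gen{M}=0<1/2$, apply Theorem \ref{Number.Hermitians} with $n=1$ to conclude that $M$ is generated by two hermitian elements $h,k$. Then $h+ik$ is a single generator for $M$, as observed in the introduction.

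There is essentially no obstacle: the only point that merits care is confirming that $\Gen{M}$ is finite (so the cancellation $\Gen{M}=t^{-2}\Gen{M}$ actually implies $\Gen{M}=0$), which follows immediately from the finite generation hypothesis. The separability hypothesis required to invoke Theorem \ref{Number.Hermitians} is automatic for finitely generated \IIi factors.
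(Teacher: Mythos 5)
Your proof is correct and follows the same route as the paper: apply the scaling formula to $t\in\Fund{M}\setminus\{1\}$ to get $\Gen{M}=t^{-2}\Gen{M}$, use finiteness of $\Gen{M}$ to conclude $\Gen{M}=0$, and then invoke the generation theorem (with $n=1$). Your additional remarks on finiteness and separability are correct but cover exactly the points the paper handles implicitly.
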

\begin{proof}
If $t\in\Fund{M}\setminus\{1\}$, then $\Gen{M}=\Gen{M_t}=t^{-2}\Gen{M}$ by Theorem \ref{Scaling.Formula} as $M\cong M_t$.  Since $\Gen{M}<\infty$, it follows that $\Gen{M}=0$ so  $M$ is singly generated by Theorem \ref{Number.Hermitians} or \cite{Shen.Generators}.
\end{proof}

The following lemma leads to the upper bound for $\Gen{M}$ in Remark \ref{Number.Remark}, and which will finally be established in Corollary \ref{NGenerators}.  A simple modification yields the odd integer case of Corollary \ref{NGenerators} without the need for further work.
\begin{lemma}\label{SelfAd.NGenerators}
Let $M$ be a \IIi factor which is generated by $n$ hermitian elements for some $n\in\mathbb N$.  Then $\GenSA{M}\leq n-1$.
\end{lemma}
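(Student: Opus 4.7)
The plan is to exploit the fact that any maximal abelian subalgebra (MASA) of a \IIi factor is diffuse, which allows one to choose a partition $P\in\Peq$ such that the generator $h_n$ commutes with every $p\in P$, making $\I(h_n;P)$ very small. The other $n-1$ generators are estimated trivially, so the total comes to $(n-1)+o(1)$.

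Since $M$ is non-abelian, it cannot be generated by a single hermitian, so one may assume $n\ge 2$. The first step is to extend $W^*(h_n)$, which is abelian because $h_n$ is normal, to a maximal abelian subalgebra $A$ of $M$ by Zorn's lemma.

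Next, I would verify that $A$ is diffuse. The argument is standard: if $A$ contained a minimal projection $p$, then $pAp=\mathbb Cp$, and a short computation shows that for each $a\in A$ one has $pap=\lambda(a)p$ for some scalar, so that every $x\in pMp$ satisfies $xa=ax=\lambda(a)x$; by maximality of $A$ this would force $pMp\subseteq A$, contradicting the fact that $pMp$ is a non-abelian \IIi factor. Hence $A\cong L^\infty[0,1]$ with the trace given by Lebesgue measure, and for each integer $k\ge 1$ one can choose mutually orthogonal projections $p_1,\ldots,p_k\in A$ of trace $1/k$ summing to $1$. Set $P=\{p_1,\ldots,p_k\}\in\Peq$.

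Finally, I would estimate $\I(\{h_1,\ldots,h_n\};P)$. Since $h_n$ and every $p_i$ lie in the abelian algebra $A$, they commute, so $p_ih_np_j=0$ whenever $i\ne j$; item \ref{Prelim.RemarksI.Commutant} of Remarks \ref{Prelim.RemarksI} then gives $\I(h_n;P)=\sum_i\tr(p_i)^2=1/k$. For each $l<n$, the trivial bound $\I(h_l;P)\le 1$ from item 1 of the same remarks applies. Summing,
\[
\I(\{h_1,\ldots,h_n\};P)\le (n-1)+\frac{1}{k},
\]
and letting $k\to\infty$ gives $\I(\{h_1,\ldots,h_n\})\le n-1$, so $\GenSA{M}\le n-1$ since $\{h_1,\ldots,h_n\}$ is a self-adjoint generating set for $M$. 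The diffuseness of MASAs in \IIi factors is the only non-trivial ingredient and poses no real obstacle.
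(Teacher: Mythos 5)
Your argument is correct and is essentially identical to the paper's proof: both place the last hermitian generator inside a masa $A$ (necessarily diffuse, hence $\cong L^\infty[0,1]$), take a fine equal-trace partition $P\subset A$ so that item \ref{Prelim.RemarksI.Commutant} of Remarks \ref{Prelim.RemarksI} bounds $\I(h_n;P)$ by $1/|P|$, and bound the remaining $n-1$ generators trivially by $1$ each. The only cosmetic point is that $\I(h_n;P)$ is $\leq 1/k$ rather than $=1/k$ (some $p_ih_n$ could vanish), which does not affect the conclusion.
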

\begin{proof}
Let $X=\{x_1,\dots,x_n\}$ be $n$ hermitian elements generating $N$.  Let $\eps>0$ and let $A$ be a masa containing $x_n$.  Since $A\cong L^\infty[0,1]$, there exists some diagonalisable $P\in\Peq(A)$ with $|P|>\eps^{-1}$.  Then  $\I(x_n;P)\leq\eps$ from item \ref{Prelim.RemarksI.Commutant} of Remarks \ref{Prelim.RemarksI}.  For each $i\in \{1,\dots,n-1\}$,  the  estimate $\I(x_i;P)\leq 1$ gives
$$
\GenSA{M}\leq\I(X;P)\leq n-1+\eps
$$
and the result follows.
\end{proof}

The next lemma is the easy direction of Theorem \ref{SelfAd.Main}.
\begin{lemma}\label{SelfAd.Easy}
Let $M$ be a \IIi factor.  Then
$$
2\Gen{M}\leq\GenSA{M}.
$$
\end{lemma}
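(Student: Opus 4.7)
The plan is to start with an arbitrary finite self-adjoint generating set $X=\{x_1,\dots,x_n\}$ of $M$ and construct from it a (not necessarily self-adjoint) generating set $Y=\{y_1,\dots,y_n\}$ of $M$ whose $\I$-value, computed against a well-chosen diagonalisable $P\in\Peq$, is essentially $\tfrac12\I(X;P)$. The factor of $2$ will be produced by the symmetry that self-adjointness of each $x_i$ imposes on its nonzero pattern relative to $P$, combined with an upper-triangularisation device.

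Assume $\GenSA{M}<\infty$. Fix $\eps>0$ and choose a finite self-adjoint generating set $X=\{x_1,\dots,x_n\}$ with $\I(X)<\GenSA{M}+\eps$. Apply Lemma \ref{Prelim.Diagonalisable} to obtain $P=\{p_1,\dots,p_k\}\in\Peq$ with $k$ a sufficiently large multiple of $n$ so that both $\I(X;P)<\I(X)+\eps$ and $n/(2k)<\eps$; fix an enumeration $p_1,\dots,p_k$. For each $i$ define
$$y_i=\tfrac12\sum_{l=1}^{k}p_lx_ip_l+\sum_{1\le l<m\le k}p_lx_ip_m\in M.$$
A direct computation using $x_i=x_i^*$ and $\sum_l p_l=1$ gives $y_i+y_i^*=x_i$, so $W^*(Y)$ contains each $x_i$ and hence equals $M$, establishing that $Y$ is a generating set for $M$.

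The key estimate comes from counting nonzero corners. For each $i$, self-adjointness of $x_i$ makes the set $\{(l,m):p_lx_ip_m\ne 0\}$ symmetric under transposition. Write $D_i$ for the number of diagonal pairs and $O_i$ for the number of pairs with $l<m$; then $\I(x_i;P)=k^{-2}(D_i+2O_i)$, while $p_ly_ip_m\ne 0$ exactly when either $l=m$ and $p_lx_ip_l\ne 0$, or $l<m$ and $p_lx_ip_m\ne 0$, yielding $\I(y_i;P)=k^{-2}(D_i+O_i)=\tfrac12\I(x_i;P)+\tfrac{1}{2k^2}D_i$. Summing over $i$ and using the crude bound $D_i\le k$ gives
$$\Gen{M}\le \I(Y;P)\le \tfrac12\I(X;P)+\frac{n}{2k}<\tfrac12\GenSA{M}+2\eps,$$
and letting $\eps\to 0$ yields $2\Gen{M}\le\GenSA{M}$.

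The only real obstacle is the residual diagonal contribution $\tfrac{1}{2k^2}D_i$ in $\I(y_i;P)$, which is the cost of including half of the diagonal entries of $x_i$ in $y_i$ so that $y_i+y_i^*$ recovers $x_i$ exactly. A purely strictly-upper-triangular $y_i$ would eliminate this error but would no longer allow $x_i$ to be recovered from $\{y_i,y_i^*\}$; the symmetric half-weighting is thus essentially forced by the method, and the resulting additive error $n/(2k)$ is made negligible by invoking Lemma \ref{Prelim.Diagonalisable} to take $k$ as large as desired.
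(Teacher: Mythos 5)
Your proof is correct and follows essentially the same route as the paper's: both exploit the transpose-symmetry of the nonzero pattern of a self-adjoint element relative to a fine equal-trace partition, pass to the strictly upper-triangular part to halve the count, and show the diagonal contribution is negligible for large $k$. The only cosmetic difference is that the paper keeps the diagonal parts $z_l=\sum_i e_ix_le_i$ as separate generators and bounds $\I(Z;Q)\le n/k$ via item 3 of Remarks \ref{Prelim.RemarksI}, whereas you absorb half of the diagonal into $y_i$ so that $y_i+y_i^*=x_i$ exactly; both devices yield the same $O(n/k)$ error.
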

\begin{proof}
Suppose that $M$ is finitely generated, as otherwise the inequality is vacuous.  Let $\eps>0$. Use Lemma \ref{Prelim.Diagonalisable} to find a self--adjoint set of generators $X=\{x_1,\dots,x_n\}$ for $M$ and a diagonalisable set of projections $P\in\Peq$ with $\I(X;P)\leq\GenSA{M}+\eps$.  Take $k=|P|m>n/\eps$ for some $m\in\mathbb N$ and choose a diagonalisable refinement $Q=\{e_1,\dots,e_k\}\in\Peq$ of $P$.  For $l\in \{1,\dots,n\}$ let
$$
y_l=\sum_{1\leq i< j\leq k}e_ix_le_j,\quad\textrm{and}\quad z_l=\sum_{i=1}^ke_ix_le_i.
$$
Then let $Y=\{y_1,\dots,y_n\}$ and $Z=\{z_1,\dots,z_n\}$.

Since $x_l=z_l+y_l+y_l^*$ for each $l$, the finite set $Y\cup Z$ generates $M$.  Moreover,
$$
2\I(Y;Q)+\I(Z;Q)=\I(Y;Q)+\I(Y^*;Q)+\I(Z;Q)=\I(X;Q)
$$
since the regions in $\{1,\dots,k\}\times\{1,\dots,k\}$ depending on $Y$, $Y^*$ and $Z$ are disjoint. This gives the estimate
$$
\I(Y;Q)\leq\I(Y;Q)+\frac{1}{2}\I(Z;Q)\leq\frac{1}{2}\I(X;Q).
$$

As $z_l\in Q'$ for all $l$, item \ref{Prelim.RemarksI.Commutant} of Remarks \ref{Prelim.RemarksI} gives $\I(Z;Q)\leq n/k<\eps$.  Hence
$$
\Gen{M}\leq\I(Y;Q)+\I(Z;Q)\leq\frac{1}{2}\I(X;Q)+\eps\leq \frac{1}{2}\GenSA{M}+\frac{3}{2}\eps
$$
as $Q\refines P$.  The lemma follows.
\end{proof}

The generator invariant can now be related to the hermtian generator invariant.
\begin{theorem}\label{SelfAd.Main}
Let $M$ be a \IIi factor. Then
$$
\Gen{M}=\frac{1}{2}\GenSA{M}.
$$
\end{theorem}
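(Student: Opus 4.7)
The plan is to establish the reverse inequality $\GenSA{M}\le 2\Gen{M}$, since Lemma \ref{SelfAd.Easy} already supplies $2\Gen{M}\le\GenSA{M}$. If $\Gen{M}=\infty$, then $M$ is not finitely generated, and consequently $\GenSA{M}=\infty$ as well, so the identity is vacuous. I therefore assume $g:=\Gen{M}<\infty$.

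The strategy is to route the argument through a compression $M_t$ with $t\in(0,1)$ small, where the scaling formula inflates $\Gen{M_t}$ to the point that the generation theorem can be applied almost optimally. Fix $t\in(0,1)$ and choose the integer $n=\lfloor 2g/t^2\rfloor+1$, which satisfies $n>2g/t^2$ and $n\le 2g/t^2+1$. By Theorem \ref{Scaling.Formula}, $\Gen{M_t}=g/t^2<n/2$; hence Theorem \ref{Number.Hermitians} produces $n+1$ hermitian generators for $M_t$. Lemma \ref{SelfAd.NGenerators}, applied to these $n+1$ generators, yields $\GenSA{M_t}\le n$.

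Now I invoke the self-adjoint half of Lemma \ref{Scaling.First}, which states $\GenSA{M}\le t^2\GenSA{M_t}$ for $0<t<1$. Combining the estimates,
\[
\GenSA{M}\le t^2\GenSA{M_t}\le t^2 n\le 2g+t^2.
\]
Since $t\in(0,1)$ was arbitrary, letting $t\to 0^+$ gives $\GenSA{M}\le 2g=2\Gen{M}$, which together with Lemma \ref{SelfAd.Easy} completes the proof.

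The argument is essentially a neat assembly of four previously established tools: the Scaling Formula (Theorem \ref{Scaling.Formula}), the generation theorem (Theorem \ref{Number.Hermitians}), the self-adjoint upper bound of Lemma \ref{SelfAd.NGenerators}, and the scaling inequality $\GenSA{M}\le t^2\GenSA{M_t}$ from Lemma \ref{Scaling.First}. There is no serious combinatorial obstacle — the heavy lifting is already in Section \ref{Scaling}. The only delicate point is the choice $n=\lfloor 2g/t^2\rfloor+1$, calibrated so that $t^2 n$ converges to $2g$ from above as $t\to 0^+$; this is precisely the mechanism by which the integer-valued generator count produced by the generation theorem sharpens to the continuous bound $2g$ in the limit.
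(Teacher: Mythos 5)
Your proof is correct and follows essentially the same route as the paper's: compress to $M_t$, inflate $\Gen{M_t}$ via the scaling formula so the generation theorem applies, bound $\GenSA{M_t}$ by Lemma \ref{SelfAd.NGenerators}, and pull back with the self-adjoint part of Lemma \ref{Scaling.First}. The only difference is cosmetic — you parametrise by $t$ and take $n=\lfloor 2g/t^2\rfloor+1$, whereas the paper fixes $\eps$ and picks $k,n$ with $\Gen{M}<n/(2k^2)\leq\Gen{M}+\eps$.
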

\begin{proof}
By Lemma \ref{SelfAd.Easy}, it suffices to prove that $\GenSA{M}\leq 2\Gen{M}$ for a finitely generated \IIi factor $M$.  Let $\eps>0$ and choose $k,n\in\mathbb N$ with $k>1$ such that
$$
\Gen{M}<\frac{n}{2k^2}\leq\Gen{M}+\eps.
$$
Write $t=1/k$ so that $0<t<1$.  The scaling formula (Theorem \ref{Scaling.Formula}) gives
$$
\Gen{M_t}=t^{-2}\Gen{M}<\frac{n}{2}.
$$
By the generation theorem (Theorem \ref{Number.Hermitians}), $M_t$ is generated by $n+1$ hermitian elements so that $\GenSA{M}\leq n$ by Lemma \ref{SelfAd.NGenerators}. Lemma \ref{Scaling.First} on scaling by small $t$ implies that
$$
\GenSA{M}\leq t^2\GenSA{M_t}\leq t^2n=\frac{n}{k^2}<2\Gen{M}+2\eps.
$$
This proves the theorem.
\end{proof}

The scaling for the  generator invariant can now be extended to the hermitian case, as stated in Remark \ref{Scaling.Second.Remark}.
\begin{corollary}\label{SelfAd.Scaling}
If $M$ is a separable \IIi factor and $t>0$, then 
$$
\GenSA{M_t}=t^{-2}\GenSA{M}.
$$
\end{corollary}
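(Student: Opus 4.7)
The plan is to deduce this corollary directly by combining the scaling formula (Theorem \ref{Scaling.Formula}) with the equivalence between $\Gen{\cdot}$ and $\GenSA{\cdot}$ established in Theorem \ref{SelfAd.Main}. Since $M_t$ is again a separable \IIi factor whenever $M$ is, both theorems apply to $M_t$ as well as to $M$.

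Explicitly, the argument proceeds as follows. First, apply Theorem \ref{SelfAd.Main} to the \IIi factor $M_t$ to obtain $\GenSA{M_t} = 2\Gen{M_t}$. Next, invoke the scaling formula (Theorem \ref{Scaling.Formula}) to rewrite the right-hand side as $2t^{-2}\Gen{M}$. Finally, apply Theorem \ref{SelfAd.Main} once more, this time to $M$ itself, yielding $2\Gen{M} = \GenSA{M}$. Chaining these equalities gives
\[
\GenSA{M_t} = 2\Gen{M_t} = 2t^{-2}\Gen{M} = t^{-2}\GenSA{M},
\]
which is the desired scaling formula.

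There is essentially no obstacle here: the entire content has already been done in the earlier sections. The one point worth flagging is the case in which $M$ (or equivalently $M_t$) is not finitely generated, but this is handled automatically because $\Gen{M}=\GenSA{M}=\infty$ precisely when $M$ is not finitely generated, and the same holds for $M_t$ by Lemma \ref{Scaling.FiniteGen}, so the equation reads $\infty = t^{-2}\cdot\infty$ in that degenerate case. This is precisely the indirect route alluded to in Remark \ref{Scaling.Second.Remark}, bypassing the difficulty that the compression technique used in Lemma \ref{Scaling.Second} does not preserve self-adjointness.
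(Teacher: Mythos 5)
Your proof is correct and is exactly the paper's argument: the paper's own proof of this corollary is the one-line remark that it follows directly from Theorems \ref{Scaling.Formula} and \ref{SelfAd.Main}, and you have simply written out the chain of equalities (including the correct handling of the infinitely generated case via Lemma \ref{Scaling.FiniteGen}).
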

\begin{proof}
This follows directly from Theorems \ref{Scaling.Formula} and \ref{SelfAd.Main}.
\end{proof}

The next corollary gives the partial converse to the generation theorem which was indicated in Remark \ref{Number.Remark}. 
\begin{corollary}\label{NGenerators}
If $n$ is the minimal number of hermitian generators of a finitely generated \IIi factor $M$, then
$$
n-1\leq 2\Gen{M}+1\leq n.
$$
\end{corollary}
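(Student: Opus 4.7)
The statement splits into two inequalities, and each one follows essentially directly from results already established in the excerpt.

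For the upper bound $2\Gen{M}+1\le n$, the plan is to invoke Lemma \ref{SelfAd.NGenerators} and Theorem \ref{SelfAd.Main}. Since $M$ is generated by $n$ hermitians, Lemma \ref{SelfAd.NGenerators} yields $\GenSA{M}\le n-1$, and Theorem \ref{SelfAd.Main} converts this to $2\Gen{M}=\GenSA{M}\le n-1$, giving $2\Gen{M}+1\le n$. This step is immediate and requires no further work.

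For the lower bound $n-1\le 2\Gen{M}+1$, equivalently $n-2\le 2\Gen{M}$, I would argue by contradiction using the generation theorem (Theorem \ref{Number.Hermitians}). First dispose of the trivial range: a \IIi factor cannot be generated by a single hermitian (such a generator produces an abelian algebra), so $n\ge 2$; when $n=2$ the inequality $n-2\le 2\Gen{M}$ reduces to $0\le 2\Gen{M}$, which holds automatically. Assume then $n\ge 3$ and suppose for contradiction that $2\Gen{M}<n-2$, so that $\Gen{M}<(n-2)/2$. Applying Theorem \ref{Number.Hermitians} with the integer $n-2\in\mathbb{N}$ in place of $n$ produces a set of $(n-2)+1=n-1$ hermitian generators of $M$, contradicting the minimality of $n$. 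Hence $n-2\le 2\Gen{M}$.

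There is no real obstacle here; the argument is entirely a bookkeeping exercise in combining the two previous results. The only care needed is handling the boundary case $n=2$ separately (since the generation theorem is stated for $n\in\mathbb N$), and noting at the outset that the abelian case $n=1$ does not arise for a \IIi factor.
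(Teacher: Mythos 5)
Your proof is correct and follows essentially the same route as the paper: the upper bound via Lemma \ref{SelfAd.NGenerators} combined with Theorem \ref{SelfAd.Main}, and the lower bound by contradiction via the generation theorem (Theorem \ref{Number.Hermitians}) applied to $n-2$. Your explicit treatment of the boundary cases $n=1,2$ is a harmless extra precaution; in the paper's contrapositive phrasing the hypothesis $2\Gen{M}+1<n-1$ already forces $n\geq 3$ since $\Gen{M}\geq 0$.
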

\begin{proof}
Lemma \ref{SelfAd.NGenerators} and Theorem \ref{SelfAd.Main} give $\Gen{M}\leq (n-1)/2$ which yields the second inequality. If $2\Gen{M}+1<n-1$, then $\Gen{M}<\frac{n-2}{2}$ so that the generation theorem (Theorem \ref{Number.Hermitians}) gives $n-1$ hermitian generators for $M$, contradicting the minimality of $n$.  This gives the first inequality.
\end{proof}

\begin{corollary}\label{NGenerators.Group}
If $G$ is a countable discrete I.C.C. group generated by $n$ elements, then $\Gen{\VN{G}}\leq (n-1)/2$.  
\end{corollary}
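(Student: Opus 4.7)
The plan is to convert the $n$ group generators into $n$ self-adjoint generators of $\VN{G}$ and then invoke Lemma \ref{SelfAd.NGenerators} together with Theorem \ref{SelfAd.Main}. Since $G$ is ICC, $\VN{G}$ is a separable \IIi factor, so these results apply.

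Let $g_1,\dots,g_n$ generate $G$, and let $u_i=u_{g_i}$ be the corresponding unitaries in $\VN{G}$. By the definition of the group von Neumann algebra, $\{u_1,\dots,u_n\}$ generates $\VN{G}$. The key observation is that each $u_i$ is normal, so the von Neumann algebra $W^*(u_i)$ is abelian and separable. By the spectral theorem, if $f_i$ is any injective real-valued Borel function on the spectrum $\sigma(u_i)\subseteq\mathbb T$, then $h_i:=f_i(u_i)$ is self-adjoint and satisfies $W^*(h_i)=W^*(u_i)$. In particular, $u_i\in W^*(h_i)$ for each $i$, so the self-adjoint set $\{h_1,\dots,h_n\}$ generates $\VN{G}$.

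Now Lemma \ref{SelfAd.NGenerators} yields $\GenSA{\VN{G}}\leq n-1$, and Theorem \ref{SelfAd.Main} converts this into
\[
\Gen{\VN{G}}=\tfrac{1}{2}\GenSA{\VN{G}}\leq\frac{n-1}{2},
\]
which is the stated bound. (Equivalently, one can apply the second inequality of Corollary \ref{NGenerators} directly.)

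There is essentially no obstacle here; the whole content sits in the single-generator reduction for abelian von Neumann algebras, which is a routine application of the spectral theorem. The point of the statement is to record the resulting upper bound: the generator invariant of a group factor is controlled by the rank of the underlying group.
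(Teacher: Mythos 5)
Your proof is correct and is essentially the paper's own argument: replace each normal generator $u_{g_i}$ by a self-adjoint generator of $W^*(u_{g_i})$ via the spectral theorem, then apply Corollary \ref{NGenerators} (which is itself just Lemma \ref{SelfAd.NGenerators} combined with Theorem \ref{SelfAd.Main}).
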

\begin{proof}
If $G$ is generated by $g_1,\dots,g_n$, then there are hermitian elements $h_i$ in the \IIi factor $\VN{G}$ with $W^*(h_i)=W^*(g_i)$ for all $i$ by the spectral theorem, since each $g_i$ is normal.  The previous corollary completes the proof.
\end{proof}

The following result can be found in \cite{Ge.DecompositionProperties}.  It dates back to Douglas and Percy \cite{Douglas.SingleGenerator} and a proof can also be found in \cite[Chapter 15]{Sinclair.MasaBook}.
\begin{quote}
\emph{If a von Neumann algebra $N$ is generated by $k$ self--adjoint elements, then $\Mat{k}\otimes N$ is generated by two self--adjoint elements, equivalent to being singly generated.}
\end{quote}

In the case of \IIi factors, this can be strengthened.
\begin{corollary}\label{Scaling.Matrices}
If $M$ is a \IIi factor generated by $k^2$ self--adjoint elements, then $M\otimes\mathbb M_k(\mathbb C)$ is generated by two self--adjoint elements, equivalent to being singly generated.
\end{corollary}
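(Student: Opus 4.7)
The plan is to combine Lemma \ref{SelfAd.NGenerators}, Theorem \ref{SelfAd.Main}, the scaling formula (Theorem \ref{Scaling.Formula}), and the generation theorem (Theorem \ref{Number.Hermitians}) in a short chain. The key observation is that passing from $M$ to $M\otimes\Mat{k}=M_k$ compresses the generator invariant by a factor of $k^{-2}$, which is exactly what is needed to drop below the threshold $1/2$ that forces single generation.

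First I would use Lemma \ref{SelfAd.NGenerators} to bound the hermitian generator invariant of $M$: since $M$ is generated by $k^2$ self-adjoint elements,
\[
\GenSA{M}\leq k^2-1.
\]
Applying Theorem \ref{SelfAd.Main} then gives
\[
\Gen{M}=\tfrac12\GenSA{M}\leq \tfrac{k^2-1}{2}.
\]

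Next I would apply the scaling formula (Theorem \ref{Scaling.Formula}) with $t=k$, noting that $M\otimes\Mat{k}$ is a representative of $M_k$. This yields
\[
\Gen{M\otimes\Mat{k}}=\Gen{M_k}=k^{-2}\Gen{M}\leq \frac{k^2-1}{2k^2}<\frac{1}{2}.
\]

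Finally, since $\Gen{M\otimes\Mat{k}}<1/2$, the generation theorem (Theorem \ref{Number.Hermitians}) with $n=1$ produces two hermitian generators of $M\otimes\Mat{k}$, and two self-adjoint generators are equivalent to a single generator via $x=h+ik$. There is no real obstacle here: the corollary is essentially a direct numerical consequence of the scaling formula combined with the equality $\Gen{M}=\tfrac12\GenSA{M}$, and the only point to keep in mind is that the bound $k^2-1$ (rather than $k^2$) from Lemma \ref{SelfAd.NGenerators} is precisely what makes the final estimate strictly less than $1/2$.
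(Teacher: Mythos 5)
Your proof is correct and follows essentially the same route as the paper: bound $\Gen{M}\leq (k^2-1)/2$, scale by $k^{-2}$ to get below $1/2$, and invoke the generation theorem. The only cosmetic difference is that you inline Lemma \ref{SelfAd.NGenerators} and Theorem \ref{SelfAd.Main} where the paper simply cites Corollary \ref{NGenerators}, which packages exactly that argument.
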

\begin{proof}
By Corollary \ref{NGenerators}
$$
\Gen{M}\leq\frac{k^2-1}{2}.
$$
The scaling formula (Theorem \ref{Scaling.Formula}) gives
$$
\Gen{M\otimes\Mat{k}}=\frac{1}{k^2}\Gen{M}\leq\frac{k^2-1}{2k^2}<1/2.
$$
The result follows from the generation theorem (Theorem \ref{Number.Hermitians}) or \cite{Shen.Generators}.
\end{proof}

Corollary \ref{MinGen} gives a formula describing the generator invariant in terms of the minimal number of generators required to generate compressions of the von Neumann algebra, based on the following:  
\begin{definition}
Let $M$ be a \IIi factor.  Write $\GenMin{M}$ for the minimal number of generators of $M$ if $M$ is finitely generated and let $\GenMin{M}=\infty$ if $M$ is not finitely generated.  The quantity $\GenSAMin{M}$ has a similar definition in terms of the number of self-adjoint generators.
\end{definition}

The following simple estimates will be used below.
\begin{enumerate}
\item $\Gen{M}\leq \GenMin{M}$ and $\GenSA{M}\leq\GenSAMin{M}$.\label{MinGen.Easy.1}
\item $\GenSAMin{M}\leq 2\GenMin{M}\leq\GenSAMin{M}+1$.\label{MinGen.Easy.2}
\end{enumerate}

\begin{corollary}\label{MinGen}
Let $M$ be a separable \IIi factor.  Then
\begin{equation}\label{MinGen.1}
\Gen{M}=\lim_{k\rightarrow\infty}\frac{\GenMin{M_{1/k}}}{k^2}
\end{equation}
and
\begin{equation}\label{MinGen.2}
\GenSA{M}=\lim_{k\rightarrow\infty}\frac{\GenSAMin{M_{1/k}}}{k^2}.
\end{equation}
\end{corollary}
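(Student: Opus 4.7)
The plan is to deduce both formulas directly by combining the scaling formula (Theorem \ref{Scaling.Formula}), the identity $\GenSA{M} = 2\Gen{M}$ (Theorem \ref{SelfAd.Main}), and the sharp two-sided bound of Corollary \ref{NGenerators} which compares the minimal number of self-adjoint generators of a \IIi factor to the generator invariant. First I would dispose of the degenerate case: if $M$ is not finitely generated, then by Lemma \ref{Scaling.FiniteGen} neither is $M_{1/k}$ for any $k$, so both sides of both asserted equalities equal $\infty$. Thus we may assume $M$ is finitely generated.

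For the self-adjoint formula, I would apply Corollary \ref{NGenerators} to the compression $M_{1/k}$, which gives
\[
2\Gen{M_{1/k}} + 1 \le \GenSAMin{M_{1/k}} \le 2\Gen{M_{1/k}} + 2.
\]
Invoking the scaling formula to replace $\Gen{M_{1/k}}$ by $k^2\Gen{M}$, dividing through by $k^2$, and letting $k\to\infty$ yields
\[
\lim_{k\to\infty}\frac{\GenSAMin{M_{1/k}}}{k^2} = 2\Gen{M} = \GenSA{M},
\]
where the last equality is Theorem \ref{SelfAd.Main}. This proves (\ref{MinGen.2}).

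For the formula (\ref{MinGen.1}) involving $\GenMin{\cdot}$ (rather than $\GenSAMin{\cdot}$), I would use the elementary estimate (\ref{MinGen.Easy.2}) applied to $M_{1/k}$, namely
\[
\frac{\GenSAMin{M_{1/k}}}{2k^2} \le \frac{\GenMin{M_{1/k}}}{k^2} \le \frac{\GenSAMin{M_{1/k}}+1}{2k^2}.
\]
Both the lower and upper bounds tend to $\GenSA{M}/2 = \Gen{M}$ as $k \to \infty$, by the self-adjoint formula just established and a second application of Theorem \ref{SelfAd.Main}. This gives (\ref{MinGen.1}).

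The argument is essentially a squeeze: all of the substantive content is already contained in Corollary \ref{NGenerators}, the scaling formula, and Theorem \ref{SelfAd.Main}. There is no real obstacle beyond keeping track of which inequality goes in which direction; the point of passing to $M_{1/k}$ is that the scaling $\Gen{M_{1/k}} = k^2\Gen{M}$ amplifies $\Gen{M}$ to an arbitrarily large value, which renders the additive error terms $+1$ and $+2$ in Corollary \ref{NGenerators} negligible after dividing by $k^2$.
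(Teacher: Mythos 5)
Your proof is correct, and every ingredient you invoke (Corollary \ref{NGenerators}, Theorem \ref{Scaling.Formula}, Theorem \ref{SelfAd.Main}, Lemma \ref{Scaling.FiniteGen}, and the elementary estimate $\GenSAMin{M}\leq 2\GenMin{M}\leq\GenSAMin{M}+1$) is available before Corollary \ref{MinGen} in the paper, so the citations are legitimate. The underlying mathematics is the same as the paper's, but your organization is reversed and somewhat cleaner: the paper proves \eqref{MinGen.1} first, obtaining the lower bound from $\Gen{M_{1/k}}\leq\GenMin{M_{1/k}}$ and the upper bound by choosing $n$ with $2k^2\Gen{M}<n\leq 2k^2\Gen{M}+1$ and invoking the generation theorem (Theorem \ref{Number.Hermitians}) directly on $M_{1/k}$ --- in effect re-deriving the content of Corollary \ref{NGenerators} inside the proof --- and only then deduces \eqref{MinGen.2}. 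You instead prove \eqref{MinGen.2} first by applying the already-packaged two-sided bound $2\Gen{M_{1/k}}+1\leq\GenSAMin{M_{1/k}}\leq 2\Gen{M_{1/k}}+2$ of Corollary \ref{NGenerators} to the compression, and then pass to \eqref{MinGen.1}. What your route buys is brevity and a clearer separation of concerns (all the combinatorial work stays inside Corollary \ref{NGenerators}); what the paper's route buys is independence of \eqref{MinGen.1} from the self-adjoint theory, since its proof of \eqref{MinGen.1} does not pass through $\GenSA{}$ except via the elementary estimates. Both are squeeze arguments in which scaling by $k^2$ kills the additive constants, exactly as you observe.
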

\begin{proof}
By Lemma \ref{Scaling.FiniteGen},  assume that $M$ is finitely generated.  The first step is to establish (\ref{MinGen.1}).  The scaling formula (Theorem \ref{Scaling.Formula}) and the estimate \ref{MinGen.Easy.1} above give
$$
\Gen{M}=\frac{\Gen{M_{1/k}}}{k^2}\leq\frac{\GenMin{M_{1/k}}}{k^2}
$$
for all $k$.  For $\eps>0$ find $k_0\in\mathbb N$ with $k_0^2>\eps^{-1}$.  For $k\geq k_0$, find $n\in\mathbb N$ with
$$
2k^2\Gen{M}<n\leq 2k^2\Gen{M}+1.
$$
The scaling formula gives 
\[
\Gen{M_{1/k}}=k^2\Gen{M}<n/2
\]
so the generation theorem (Theorem \ref{Number.Hermitians}) ensures that $M_{1/k}$ is generated by $n+1$ hermitians.  Estimate \ref{MinGen.Easy.2} preceding the corollary gives
$$
\GenMin{M_{1/k}}\leq\frac{1}{2}\left(\GenSAMin{M_{1/k}}+1\right)\leq\frac{n}{2}+1
$$
so that
$$
\frac{\GenMin{M_{1/k}}}{k^2}\leq \frac{n}{2k^2}+\frac{1}{k^2}\leq\Gen{M}+\frac{2}{k^2}\leq\Gen{M}+2\eps.
$$
This gives  equation (\ref{MinGen.1}).  For (\ref{MinGen.2}), note that the estimate \ref{MinGen.Easy.2} above gives
$$
\lim_{k\rightarrow\infty}\frac{\GenSAMin{M_{1/k}}}{k^2}=2\lim_{k\rightarrow\infty}\frac{\GenMin{M_{1/k}}}{k^2}.
$$
The result follows from (\ref{MinGen.1}) and Theorem \ref{SelfAd.Main}.
\end{proof}

The last result of this section records that the previous corollary is part of a general phenomenon regarding invariants of \IIi factors which scale and are bounded by the numbers of generators involved.  The proof is omitted.
\begin{corollary}
Suppose that $\mathcal H$ is an invariant of a \IIi factor which satisfies $\mathcal H(M)=t^2\mathcal H(M_t)$ for all $t>0$. If there exist constants $a,b\geq 0$ and $\alpha,\beta\in\mathbb R$ with 
$$
a\GenMin{M}+\alpha\leq\mathcal H(M)\leq b\GenMin{M}+\beta
$$
for all separable \IIi factors $M$, then
$$
a\Gen{M}\leq\mathcal H(M)\leq b\Gen{M}.
$$
\end{corollary}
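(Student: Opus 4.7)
The plan is to reduce everything to Corollary \ref{MinGen}, which already identifies $\Gen{M}$ with the limit $\lim_{k\to\infty}\GenMin{M_{1/k}}/k^2$. Since $\mathcal H$ is hypothesised to scale under compression in exactly the same quadratic way as $\Gen{\cdot}$ (both being multiplied by $t^{-2}$ under $M\mapsto M_t$), the strategy is to apply the sandwiching bound for $\mathcal H$ at each compression $M_{1/k}$, divide by $k^2$, and let $k\to\infty$, so that the additive constants $\alpha$ and $\beta$ are washed away by the quadratic factor.

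Concretely, assume first that $M$ is finitely generated. For each $k\in\mathbb N$, the hypothesis applied to the separable \IIi factor $M_{1/k}$ reads
$$
a\,\GenMin{M_{1/k}} + \alpha \;\leq\; \mathcal H(M_{1/k}) \;\leq\; b\,\GenMin{M_{1/k}} + \beta.
$$
The scaling assumption on $\mathcal H$ with $t=1/k$ gives $\mathcal H(M_{1/k}) = k^2\mathcal H(M)$, so dividing by $k^2$ yields
$$
a\,\frac{\GenMin{M_{1/k}}}{k^2} + \frac{\alpha}{k^2} \;\leq\; \mathcal H(M) \;\leq\; b\,\frac{\GenMin{M_{1/k}}}{k^2} + \frac{\beta}{k^2}.
$$

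Letting $k\to\infty$, the terms $\alpha/k^2$ and $\beta/k^2$ vanish, while Corollary \ref{MinGen} delivers $\GenMin{M_{1/k}}/k^2 \to \Gen{M}$. This produces $a\Gen{M} \leq \mathcal H(M) \leq b\Gen{M}$ in the finitely generated case. If instead $M$ is not finitely generated, then Lemma \ref{Scaling.FiniteGen} forces every compression $M_{1/k}$ to be non-finitely generated too, and the hypothesis combined with the scaling for $\mathcal H$ makes the claimed inequalities hold in the extended reals.

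There is no substantial obstacle here: the proof is essentially a one-step limit argument once the two ingredients, namely the quadratic scaling of $\mathcal H$ and the limit characterisation of $\Gen{M}$ supplied by Corollary \ref{MinGen}, are in place. The only minor care point is verifying that $\alpha/k^2$ and $\beta/k^2$ genuinely vanish, which relies precisely on the quadratic (as opposed to linear) scaling of $\mathcal H$; if $\mathcal H$ scaled only linearly the constants would survive and the statement would require additive correction terms.
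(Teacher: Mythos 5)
Your argument is correct and is plainly the intended one: the paper omits the proof, but places the corollary immediately after Corollary \ref{MinGen} precisely so that one applies the sandwich hypothesis to $M_{1/k}$, uses $\mathcal H(M_{1/k})=k^2\mathcal H(M)$, divides by $k^2$, and lets $k\to\infty$ so the constants $\alpha,\beta$ disappear. Your handling of the non-finitely-generated case via Lemma \ref{Scaling.FiniteGen} and the extended reals is also the natural reading, so there is nothing to add.
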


\section{Finite index subfactors}
This brief section examines the generator invariant for finite index inclusions of \IIi factors.  Recall from \cite{Jones.Index} that if $N\subset M$ is a unital inclusion of \IIi factors and $e_N$ is the orthogonal projection from $L^2(M)$ onto $L^2(N)$, then the basic construction $\ip{M}{e_N}$ is the von Neumann subalgebra of $\mathbb B(L^2(M))$ generated by $M$ and $e_N$ and $\ip{M}{e_N}=JN'J$, where $J$ is the usual modular conjugation operator on $L^2(M)$ given by extending the map $x\mapsto x^*$. This last equation holds for any von Neumann subalgebra of a \IIi factor $M$, and implies that  $\ip{M}{e_N}$ is a factor precisely when the same is true for $N$.  Recall also that $\{e_N\}'\cap\ip{M}{e_N}=N$.  In this situation, $N$ is said to be a \emph{finite index} subfactor if $\ip{M}{e_N}$ is a type \IIi factor, and one formulation of the index $[M:N]$ is given by $[M:N]=\Tr(1)$, where $\Tr$ is the unique trace on $\ip{M}{e_N}$ normalised with $\Tr(e_N)=1$.
\begin{lemma}\label{Subfactor.BCon}
Suppose that  $N\subset M$ is a unital inclusion of \IIi factors with $[M:N]<\infty$.  Then 
$$
\Gen{\ip{M}{e_N}}\leq\Gen{M}.
$$
\end{lemma}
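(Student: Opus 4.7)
The plan is to generate $\langle M, e_N\rangle$ by combining a finite generating set $X$ of $M$ with the Jones projection $e_N$, and to estimate $\I(X \cup \{e_N\}; P)$ using a partition $P$ whose elements all lie inside $N$. The key point is that projections in $N$ commute with $e_N$, so that the cells $p_i e_N p_j$ vanish whenever $i \ne j$; this reduces $\I(e_N;P)$ to the diagonal sum $1/|P|$, which can be made arbitrarily small.

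Fix $\varepsilon > 0$. First, by Lemma \ref{Prelim.Diagonalisable}, choose a finite generating set $X$ for $M$ together with a diagonalisable $P = \{p_1,\dots,p_k\} \in \Peq(M)$ satisfying $k > 1/\varepsilon$ and $\I_M(X;P) < \Gen{M} + \varepsilon$. Next, since the hypothesis $[M:N] < \infty$ ensures that $N$ is itself a \IIi factor, pick an equal-trace partition $P' \subset N$ of size $k$. The trace multisets of $P$ and $P'$ coincide, so by the remark following the definition of $\sim$ there is a unitary $u \in M$ with $uPu^* = P'$. Because $uXu^*$ still generates $M$ and $\I_M(uXu^*;P') = \I_M(X;P)$, one may assume from the outset that $P \subset N$.

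It remains to estimate $\I_{\langle M,e_N\rangle}(X \cup \{e_N\}; P)$. The normalised trace on $\langle M, e_N\rangle$ restricts to $\tr$ on $M$, because $\Tr|_M = [M:N]\,\tr$, so for $x \in X \subset M$ each cell $p_i x p_j$ carries the same weight in both algebras, giving $\I_{\langle M,e_N\rangle}(X;P) = \I_M(X;P) < \Gen{M} + \varepsilon$. For the Jones projection, $P \subset N$ forces $p_i e_N p_j = \delta_{ij} p_i e_N$, and each $p_i e_N$ is nonzero because its trace equals $[M:N]^{-1}\tr(p_i) > 0$. Hence
$$\I_{\langle M,e_N\rangle}(e_N;P) = \sum_{i=1}^{k}\tr(p_i)^2 = \frac{1}{k} < \varepsilon.$$
Combining these two estimates yields $\Gen{\langle M,e_N\rangle} < \Gen{M} + 2\varepsilon$, and the result follows on letting $\varepsilon \to 0$. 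The only mild subtlety in the argument is the reduction to $P \subset N$, which relies on unitary equivalence of equal-trace partitions in the \IIi factor $M$; everything else is a direct trace computation.
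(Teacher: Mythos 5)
Your proof is correct and follows essentially the same route as the paper's: conjugate the partition into $N$ by a unitary of $M$, observe that $e_N$ commutes with $N$ so its contribution is the diagonal sum $1/k<\eps$, and note that $uXu^*\cup\{e_N\}$ generates $\ip{M}{e_N}$. The only cosmetic difference is that you fix $k>1/\eps$ at the outset whereas the paper refines the conjugated partition inside $N$ afterwards; your explicit check that the normalised trace on $\ip{M}{e_N}$ restricts to $\tr_M$ is a point the paper leaves implicit.
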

Note that in this lemma, $\Gen{\ip{M}{e_N}}$ is computed with respect to the trace $\lambda^{-1}\Tr$, where $\lambda=[M:N]$, since this is the trace on the basic construction algebra $\ip{M}{e_N}$, normalised to take the value $1$ at the identity.
\begin{proof}
Take $\eps>0$, a finite generating set $X$ for $M$ and (by Lemma \ref{Prelim.Diagonalisable}) a collection of projections $P\in\Peq(M)$ with $\I(X;P)<\Gen{M}+\eps$.  Since $N$  is also a factor, there exists a unitary $u\in\Unitary(N)$ such that $uPu^*=Q_0\in\P(N)$.  Choose a refinement $Q$ of $Q_0$ in $\Peq(N)$ with $|Q|=k$ for some $k>\eps^{-1}$.    The inequality $$\I(e_N;Q)\leq k^{-1}<\varepsilon$$ is implied by item \ref{Prelim.RemarksI.Commutant} of Remarks \ref{Prelim.RemarksI}. Since $uXu^*$ also generates $M$, it follows that
$$
\Gen{\ip{M}{e_N}}\leq\I(uXu^*\cup\{e_N\};Q)\leq\I(X;P)+\eps<\Gen{M}+2\eps,
$$
proving the result.
\end{proof}

\begin{lemma}\label{Subfactor.Subfactor}
Let $N\subset M$ be a finite index unital inclusion of \IIi factors.  Then
$$
\Gen{N}\geq\Gen{M}.
$$
\end{lemma}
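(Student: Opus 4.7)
The plan is to climb one step up the Jones tower and reduce the statement to Lemma \ref{Subfactor.BCon}. Let $\lambda = [M:N]$ and form $M_1 := \langle M, e_N\rangle$ and $M_2 := \langle M_1, e_M\rangle$. By standard Jones theory, $M_1$ and $M_2$ are \IIi factors, each inclusion in the tower $M \subset M_1 \subset M_2$ has index $\lambda$, and the Jones projections $e_N \in M_1$ and $e_M \in M_2$ have trace $\lambda^{-1}$ in their respective ambient factors when using the trace normalised to take the value $1$ at the identity.

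The key observation is that $N$ and $M$ appear as corners of $M_1$ and $M_2$ respectively: the standard trace-preserving identifications $N \cong e_N M_1 e_N$ and $M \cong e_M M_2 e_M$ realise $N$ as $(M_1)_{\lambda^{-1}}$ and $M$ as $(M_2)_{\lambda^{-1}}$. The scaling formula (Theorem \ref{Scaling.Formula}) therefore gives
$$
\Gen{N} = \lambda^2 \Gen{M_1} \quad \text{and} \quad \Gen{M} = \lambda^2 \Gen{M_2}.
$$
Applying the previous lemma to the finite index inclusion $M \subset M_1$ yields $\Gen{M_2} \leq \Gen{M_1}$, and combining these facts produces
$$
\Gen{M} = \lambda^2 \Gen{M_2} \leq \lambda^2 \Gen{M_1} = \Gen{N},
$$
as required.

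There is no serious obstacle in this approach: once the trace-normalised identifications of $N$ and $M$ inside the Jones tower are checked, the desired inequality falls out of a single application of Lemma \ref{Subfactor.BCon} combined with the scaling formula. An alternative route would proceed via the downward basic construction (the Jones tunnel), producing an intermediate subfactor $P \subset N$ with $[N:P]=\lambda$ and $M \cong \langle N, e_P\rangle$, and applying Lemma \ref{Subfactor.BCon} directly to $P \subset N$ to obtain $\Gen{M} \leq \Gen{N}$; however, the tower-based argument has the advantage of relying only on the upward basic construction and on tools already developed in this paper.
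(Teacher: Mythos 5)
Your argument is correct, but it takes a genuinely different route from the paper's. The paper's proof is a one-liner via the \emph{downward} basic construction: citing Jones's Lemma 3.1.8, there is a subfactor $P\subset N$ with $[N:P]=[M:N]$ and $\ip{N}{e_P}\cong M$, so a single application of Lemma \ref{Subfactor.BCon} to $P\subset N$ gives $\Gen{M}=\Gen{\ip{N}{e_P}}\leq\Gen{N}$ --- precisely the ``alternative route'' you sketch at the end. You instead climb the Jones tower twice and compensate with the scaling formula: since $e_N$ and $e_M$ have normalised trace $\lambda^{-1}$ in $M_1=\ip{M}{e_N}$ and $M_2=\ip{M_1}{e_M}$ respectively, the standard corner identifications give $N\cong e_NM_1e_N\cong (M_1)_{\lambda^{-1}}$ and $M\cong e_MM_2e_M\cong(M_2)_{\lambda^{-1}}$, and Theorem \ref{Scaling.Formula} converts the inequality $\Gen{M_2}\leq\Gen{M_1}$ (Lemma \ref{Subfactor.BCon} applied to the index-$\lambda$ inclusion $M\subset M_1$) into $\Gen{M}\leq\Gen{N}$; the infinitely generated cases are absorbed automatically because the scaling formula covers them. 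The trade-off is clear: the paper's version is shorter and applies the key lemma only once, but it leans on the existence of a tunnel, a less canonical piece of subfactor theory; your version needs one extra storey of the tower and the scaling theorem, but uses only the canonical upward construction together with machinery already developed in the paper. Note also that your identity $\Gen{N}=\lambda^2\Gen{\ip{M}{e_N}}$ is exactly the formula the authors record in the discussion immediately following this lemma, so your proof in effect anticipates that observation and puts it to work.
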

\begin{proof}
Recall from \cite[Lemma 3.1.8]{Jones.Index}, that given a finite index inclusion $N\subset M$, there exists a subfactor $P\subset N$ with $[N:P]=[M:N]$ and $\ip{N}{e_P}\cong M$.  The result follows immediately from the previous lemma.
\end{proof}

Note that if there is a \IIi factor $M$ with $\Gen{M}>0$, then the previous lemma does not hold for infinite index subfactors as there is always a copy of the hyperfinite \IIi factor $R$ inside any \IIi factor.

\begin{theorem}
Let $N\subset M$ be a finite index unital inclusion of \IIi factors. Then $\Gen{M}=0$ if, and only if, $\Gen{N}=0$.
\end{theorem}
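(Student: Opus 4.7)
The plan is to combine the two lemmas of this section with the scaling formula (Theorem \ref{Scaling.Formula}) to reduce both implications to facts already proved.

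One direction is immediate from Lemma \ref{Subfactor.Subfactor}: if $\Gen{N}=0$, then $0\leq\Gen{M}\leq\Gen{N}=0$, so $\Gen{M}=0$.

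For the converse, assume $\Gen{M}=0$. Lemma \ref{Subfactor.BCon} then forces $\Gen{\ip{M}{e_N}}=0$. The task is to transfer vanishing from the basic construction down to $N$, and this is where the scaling formula enters. Recall the standard identification from the theory of the basic construction: $e_N\ip{M}{e_N}e_N=Ne_N$, and the map $x\mapsto xe_N$ is a trace-scaling $*$-isomorphism of $N$ onto this corner. Since $\Tr(e_N)=1$ and $\Tr(1)=\lambda$, the projection $e_N$ has normalized trace $\lambda^{-1}$ in $\ip{M}{e_N}$. Thus $N$ is isomorphic (as a tracial von Neumann algebra) to the compression $(\ip{M}{e_N})_{\lambda^{-1}}$, and Theorem \ref{Scaling.Formula} gives
$$
\Gen{N}=\Gen{(\ip{M}{e_N})_{\lambda^{-1}}}=\lambda^{2}\Gen{\ip{M}{e_N}}=0.
$$

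The only point requiring care is the identification of $N$ with a corner of $\ip{M}{e_N}$ of the correct trace, and the resulting bookkeeping to apply the scaling formula with $t=\lambda^{-1}$; everything else is a direct invocation of results already established.
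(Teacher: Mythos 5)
Your proof is correct and follows the paper's argument exactly: the easy direction is Lemma \ref{Subfactor.Subfactor}, and the converse passes from $M$ to $\ip{M}{e_N}$ via Lemma \ref{Subfactor.BCon} and then down to $N\cong e_N\ip{M}{e_N}e_N\cong(\ip{M}{e_N})_{\lambda^{-1}}$ using the scaling formula. Your extra care about the normalisation $\Tr(e_N)=1$, $\Tr(1)=\lambda$ is exactly the bookkeeping the paper leaves implicit.
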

\begin{proof}
Suppose that $\Gen{M}=0$.  By Lemma \ref{Subfactor.BCon}, it follows that $\Gen{\ip{M}{e_N}}=0$.  Now $N\cong e_N\ip{M}{e_N}e_N$, so $N\cong\ip{M}{e_N}_{\lambda^{-1}}$, where $\lambda=[M:N]$.  The scaling formula (Theorem \ref{Scaling.Formula}) immediately gives $\Gen{N}=0$.  The reverse direction is Lemma \ref{Subfactor.Subfactor} above.
\end{proof}

More generally, suppose that $N\subset M$ is a finite index unital inclusion of \IIi factors and write $\lambda=[M:N]$.  The isomorphism $N\cong e_N\ip{M}{e_N}e_N$ and the scaling formula lead to the equality
$$
\Gen{N}=\lambda^2\Gen{\ip{M}{e_N}}.
$$
Furthermore, Lemmas \ref{Subfactor.BCon} and \ref{Subfactor.Subfactor} give
$$
\Gen{N}\geq\Gen{M}\geq\Gen{\ip{M}{e_N}}=\lambda^{-2}\Gen{N}.
$$
It is then natural to pose the following question.
\begin{question}
Suppose that $N\subset M$ is a finite index unital inclusion of \IIi factors and write $\lambda=[M:N]$.  Is it the case that
$$
\Gen{N}=\lambda\Gen{M}?
$$
\end{question}

\section{Free group factors and free products}\label{Free}
The main result in this section is Theorem \ref{Scaling.FreeGroup} below, which is obtained by using the quadratic scaling of the generator invariant and
a similar property of the interpolated free group factors $\VN{\FG_{s}}$ of the first author and R\u adulescu \cite{Dykema.InterpolatedFreeGroup,Radulescu.MatricesFreeProd}.   For $r>1$ and $\lambda>0$, the quadratic scaling formula for the interpolated free group factors states that
\begin{equation}\label{Scaling.FreeGroup.10}
(\VN{\FG_r})_\lambda=\VN{\FG_{1+\frac{r-1}{\lambda^2}}},
\end{equation}
from \cite[Theorem 2.4]{Dykema.InterpolatedFreeGroup}. 

\begin{theorem}\label{Scaling.FreeGroup}
There exists a constant $0\leq\alpha\leq 1/2$ such that
\begin{equation}\label{Scaling.FreeGroup.2}
\Gen{\VN{\FG_{1+r}}}=r\alpha.
\end{equation}
for all $r\in(0,\infty]$.
\end{theorem}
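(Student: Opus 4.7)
My plan is to set $\alpha := \Gen{\VN{\FG_2}}$, bound $\alpha \in [0,1/2]$, and then extend the identity $\Gen{\VN{\FG_{1+r}}} = r\alpha$ from $r = 1$ (where it holds by construction) to all of $(0,\infty]$ using Theorem \ref{Scaling.Formula} together with the interpolated free group factor identity (\ref{Scaling.FreeGroup.10}). For the upper bound on $\alpha$, $\VN{\FG_2}$ is generated by the two unitaries corresponding to the standard generators of $\FG_2$, and by the spectral theorem each such unitary is contained in the von Neumann algebra generated by a single hermitian element. Thus $\VN{\FG_2}$ is generated by two hermitians; Lemma \ref{SelfAd.NGenerators} then gives $\GenSA{\VN{\FG_2}} \leq 1$, and Theorem \ref{SelfAd.Main} yields $\alpha = \tfrac{1}{2}\GenSA{\VN{\FG_2}} \leq 1/2$.

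For finite $r \in (0,\infty)$, fix any other $s \in (0,\infty)$ and set $\lambda = \sqrt{r/s}$. Then (\ref{Scaling.FreeGroup.10}) gives $(\VN{\FG_{1+r}})_\lambda = \VN{\FG_{1+s}}$, and Theorem \ref{Scaling.Formula} gives $\Gen{\VN{\FG_{1+s}}} = \lambda^{-2}\Gen{\VN{\FG_{1+r}}} = (s/r)\Gen{\VN{\FG_{1+r}}}$. Hence $r \mapsto r^{-1}\Gen{\VN{\FG_{1+r}}}$ is constant on $(0,\infty)$ and equals $\alpha$ by evaluation at $r = 1$, giving the formula for every finite $r > 0$.

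For $r = \infty$, setting $r = \infty$ in (\ref{Scaling.FreeGroup.10}) yields $(\VN{\FG_\infty})_\lambda = \VN{\FG_\infty}$ for every $\lambda > 0$, so $\VN{\FG_\infty}$ has fundamental group $\mathbb{R}^+$ and Corollary \ref{Scaling.FundamentalGroup} gives the dichotomy $\Gen{\VN{\FG_\infty}} \in \{0,\infty\}$ according to whether $\VN{\FG_\infty}$ is finitely generated or not. Matching this dichotomy to the claimed value $\infty \cdot \alpha$ is the main obstacle: one needs $\alpha > 0$ to force $\VN{\FG_\infty}$ to be infinitely generated, and $\alpha = 0$ to force it to be singly generated. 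I expect both to follow from the free-product estimates developed later in Section \ref{Free}: a lower bound of the form $\Gen{N_1 * N_2} \geq \Gen{N_1}$, applied via $\VN{\FG_\infty} \cong \VN{\FG_{1+r}} * \VN{\FG_\infty}$, would give $\Gen{\VN{\FG_\infty}} \geq r\alpha$ for every finite $r$, settling the $\alpha > 0$ case; in the $\alpha = 0$ case, R\u adulescu-type isomorphism results for interpolated free group factors with non-trivial fundamental group should identify $\VN{\FG_\infty}$ with $\VN{\FG_2}$ and yield $\Gen{\VN{\FG_\infty}} = 0$.
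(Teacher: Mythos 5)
Your treatment of finite $r$ and of the bound $\alpha\leq 1/2$ is correct and is exactly the paper's argument (the paper packages the two-hermitian observation as Corollary \ref{NGenerators.Group}, but the content is the same). The problem is the case $r=\infty$, where both of the ingredients you "expect" to be available are in fact not. First, no lower bound of the form $\Gen{N_1*N_2}\geq\Gen{N_1}$ is proved in this paper or known elsewhere: Section \ref{Free} only establishes \emph{upper} bounds for free products, and the remark after Lemma \ref{Subfactor.Subfactor} points out that $\Gen{\cdot}$ is not monotone under passing to (infinite index) subalgebras, since every \IIi factor contains a copy of the hyperfinite factor. A reverse free-product inequality is explicitly flagged in the paper as something that would already resolve the free group factor isomorphism problem, so you cannot assume it. Second, in the case $\alpha=0$ you propose to "identify $\VN{\FG_\infty}$ with $\VN{\FG_2}$" via R\u adulescu-type results; but whether $\VN{\FG_\infty}\cong\VN{\FG_2}$ is precisely the open free group factor isomorphism problem, and it does not follow from $\alpha=0$ (which only says $\Gen{\VN{\FG_2}}=0$, a statement already known for many factors that are certainly not isomorphic to $\VN{\FG_\infty}$). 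So the $r=\infty$ case is genuinely unproved in your write-up.

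What the paper does instead is prove the equivalence $\Gen{\VN{\FG_\infty}}=0\iff\alpha=0$ directly, and then combine it with the $\{0,\infty\}$ dichotomy you correctly derived from the fundamental group. For the implication $\alpha=0\Rightarrow\Gen{\VN{\FG_\infty}}=0$ one uses the increasing chain $\VN{\FG_2}\subset\VN{\FG_3}\subset\cdots$ with weakly dense union in $\VN{\FG_\infty}$, each term having vanishing invariant, together with Shen's main technical theorem \cite[Theorem 5.1]{Shen.Generators}, which propagates the vanishing of the invariant through such a chain; this is an external input your argument is missing. For the converse $\Gen{\VN{\FG_\infty}}=0\Rightarrow\alpha=0$ one uses the group isomorphism $\FG_2\cong\FG_\infty\rtimes\mathbb Z$, so that $\VN{\FG_2}$ is generated by $\VN{\FG_\infty}$ together with a single normalizing unitary, and the vanishing of $\Gen{\VN{\FG_\infty}}$ passes to the crossed product (again essentially from \cite{Shen.Generators}). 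With this equivalence in hand, $\Gen{\VN{\FG_\infty}}$ equals $0$ exactly when $\alpha=0$ and equals $\infty$ exactly when $\alpha>0$, which is the meaning of $\Gen{\VN{\FG_{1+r}}}=r\alpha$ at $r=\infty$.
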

\begin{proof}
For $r\in(0,\infty)$ write $\beta(r)=\Gen{\VN{\FG_{r+1}}}$.   Equation (\ref{Scaling.FreeGroup.10}) above and Theorem \ref{Scaling.Formula} combine to give
\begin{equation}\label{Scaling.FreeGroup.1}
\beta\left(\frac{r-1}{\lambda^2}\right)=\Gen{(\VN{\FG_r})_\lambda}=\lambda^{-2}\Gen{\VN{\FG_r}}=\lambda^{-2}\beta(r-1).
\end{equation}
Take $r-1=t$ and $\lambda^{-2}=s$ in (\ref{Scaling.FreeGroup.1}) to obtain 
$$
\beta(st)=s\beta(t)
$$
for all $s,t\in(0,\infty)$. Hence, there is a constant $\alpha=\beta(1)\geq 0$ with $\beta(t)=\alpha t$.  The estimate $\alpha\leq\frac{1}{2}$ follows from Corollary \ref{NGenerators.Group} as $\alpha={\mathcal G}(\VN{\FG_2})$ and $\FG_2$ is certainly generated by two elements.

It remains to extend (\ref{Scaling.FreeGroup.2}) to the $r=\infty$ case. Since the fundamental group of $\VN{\FG_\infty}$ is $\mathbb R^+$, \cite{Radulescu.FundamentalGroup}, or more easily since $\mathbb N\subset\Fund{\VN{\FG_\infty}}$, \cite[Corollary 5.2.3]{Dykema.FreeBook}, it follows that $\Gen{\VN{\FG_\infty}}$ is either $0$ or $\infty$, by Corollary \ref{Scaling.FundamentalGroup}. Suppose that $\alpha=0$ so that $\Gen{\VN{\FG_k}}=0$ for each $k\geq 2$.  Consideration of the chain 
$$
\VN{\FG_2}\subset\VN{\FG_3}\subset\VN{\FG_4}\subset\dots\subset \VN{\FG_\infty}
$$
gives $\Gen{\VN{\FG_\infty}}=0$ by Shen's main technical theorem, \cite[Theorem 5.1]{Shen.Generators}.  Conversely, if $\Gen{\VN{\FG_\infty}}=0$, then the relation $\alpha=\Gen{\VN{\FG_2}}=0$ follows from the isomorphism
$$
\FG_2\cong\FG_\infty\rtimes\mathbb Z.
$$
This is essentially in \cite{Shen.Generators} and a proof can also be found in \cite[Chapter 15]{Sinclair.MasaBook}. Hence $\Gen{\VN{\FG_\infty}}=0$ if and only if $\alpha=0$ and the result follows. 
\end{proof}

As an immediate consequence, there is a direct link between the free group isomorphism problem and the generator invariant.

\begin{corollary}\label{FreeGroups.Cor}
If $\Gen{\VN{\FG_2}}>0$, then all the interpolated free group factors $\VN{\FG_r}$ are pairwise non-isomorphic for $r>1$.
\end{corollary}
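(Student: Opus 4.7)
The proof is essentially a one-line deduction from Theorem \ref{Scaling.FreeGroup}, since the generator invariant $\Gen{\cdot}$ is by construction an isomorphism invariant of \IIi factors with fixed trace: any trace-preserving $*$-isomorphism $M\cong M'$ sends generating sets to generating sets while preserving $\tau$, hence preserves $\I(X;P)$ and thus the infimum $\Gen{\cdot}$. For factors, the trace is automatically unique, so $\Gen{\cdot}$ depends only on the $*$-algebra.

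My plan is as follows. Fix $\alpha=\Gen{\VN{\FG_2}}$ and assume $\alpha>0$. By Theorem \ref{Scaling.FreeGroup}, for every $r\in(0,\infty)$ we have $\Gen{\VN{\FG_{1+r}}}=r\alpha$, and moreover (from the proof of that theorem) $\Gen{\VN{\FG_\infty}}=\infty$ because the dichotomy there forces the $\infty$ value whenever $\alpha\neq 0$. Thus the function
\[
(1,\infty]\ni s\longmapsto\Gen{\VN{\FG_s}}=(s-1)\alpha\in(0,\infty]
\]
is strictly increasing, where we read $(\infty-1)\alpha=\infty$. Since isomorphic von Neumann algebras have the same generator invariant, $\VN{\FG_s}\cong\VN{\FG_{s'}}$ would force $(s-1)\alpha=(s'-1)\alpha$, and hence $s=s'$ by the positivity of $\alpha$.

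There is essentially no obstacle beyond citing Theorem \ref{Scaling.FreeGroup}; the only small point worth stating explicitly is isomorphism-invariance of $\Gen{\cdot}$, which is immediate from Definitions \ref{Prelim.DefI} and \ref{Prelim.DefG} together with the uniqueness of the trace on a \IIi factor. The conclusion then reads: under the hypothesis $\alpha>0$, the factors $\{\VN{\FG_r}:r\in(1,\infty]\}$ are distinguished from one another by the single numerical invariant $\Gen{\cdot}$, and are therefore pairwise non-isomorphic.
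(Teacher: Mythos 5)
Your proposal is correct and is exactly the argument the paper intends: the corollary is stated as an immediate consequence of Theorem \ref{Scaling.FreeGroup}, since $\Gen{\VN{\FG_{1+r}}}=r\alpha$ with $\alpha=\Gen{\VN{\FG_2}}>0$ makes the generator invariant strictly increasing in $r$, and $\Gen{\cdot}$ is an isomorphism invariant because the trace on a \IIi factor is unique. Your explicit remarks on isomorphism-invariance and on the $r=\infty$ case are fine and consistent with the paper.
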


\begin{remark}
Since $\VN{\FG_1}\cong L^\infty[0,1]$ has generator invariant $0$, the previous theorem extends to include the case $r=0$.
\end{remark}

The next results give some estimates regarding the generator invariant and free products.  Reverse inequalities to either Theorem \ref{FreeProduct.General} or Theorem \ref{FreeProduct.Hyperfinite} would immediately combine with Corollary \ref{FreeGroups.Cor} to show the non--isomorphism of the free group factors. Lemma \ref{FreeProduct.AmalDiffuse}  originates in a conjugacy idea used repeatedly by Shen to obtain \cite[Theorem 5.1]{Shen.Generators}.
\begin{lemma}\label{FreeProduct.AmalDiffuse}
Let $M$ and $N$ be \IIi factors containing a common diffuse von Neumann subalgebra $B$.  Then 
$$
\Gen{M*_BN}\leq\Gen{M}+\Gen{N}.
$$
\end{lemma}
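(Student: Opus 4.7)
The plan is to combine finite generating sets for $M$ and $N$ into one for $M*_B N$, and to choose a common partition that lies inside the diffuse amalgamating subalgebra $B$, using a conjugation trick to align near-optimal partitions in $M$ and $N$ with one in $B$.

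Assume $\Gen{M},\Gen{N}<\infty$, since otherwise the inequality is trivial. Fix $\eps>0$. Applying Lemma \ref{Prelim.Diagonalisable} with $n=1$ to the (diffuse) factors $M$ and $N$, I can find finite generating sets $X$ for $M$ and $Y$ for $N$, together with a common integer $k$ and partitions $P_M\in\Peq(M)$ and $P_N\in\Peq(N)$, each of cardinality $k$, such that
$$
\I_M(X;P_M)<\Gen{M}+\eps\quad\text{and}\quad \I_N(Y;P_N)<\Gen{N}+\eps.
$$
(Achieving a common cardinality is no issue: the lemma allows $k$ to be as large as we wish, and since refinement only decreases $\I$ by Lemma \ref{Prelim.Refine}, any two sizes can be promoted to a common multiple.) Since $B$ is diffuse, I can also choose some $P\in\Peq(B)$ with $|P|=k$. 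Because $M$ is a \IIi factor, any two elements of $\Peq(M)$ of equal size are unitarily conjugate; thus there exists $u\in\Unitary(M)$ with $uPu^*=P_M$, and likewise $v\in\Unitary(N)$ with $vPv^*=P_N$.

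Set $X'=u^*Xu$ and $Y'=v^*Yv$. Then $W^*(X')=M$ and $W^*(Y')=N$, so $X'\cup Y'$ generates $M*_BN$. For each $p,q\in P$ and $x\in X$,
$$
p(u^*xu)q=u^*(upu^*)\,x\,(uqu^*)u,
$$
so $p(u^*xu)q\neq0$ if and only if $(upu^*)x(uqu^*)\neq 0$, and the unitary conjugation preserves traces. Summing over $p,q\in P$ and $x\in X$ yields $\I_{M*_BN}(X';P)=\I_M(X;P_M)$, and analogously $\I_{M*_BN}(Y';P)=\I_N(Y;P_N)$. Therefore
$$
\Gen{M*_BN}\leq \I_{M*_BN}(X'\cup Y';P)=\I_M(X;P_M)+\I_N(Y;P_N)<\Gen{M}+\Gen{N}+2\eps,
$$
and letting $\eps\to 0$ finishes the proof.

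The only real subtlety is the conjugation step: we need a single partition $P\in\Peq$ inside the common subalgebra $B$ which is simultaneously equivalent (in both $M$ and $N$) to near-optimal partitions for $X$ and $Y$. This is exactly where the diffuseness of $B$ is essential, and it is the ingredient which makes the amalgamated bound additive rather than merely subadditive with a correction term.
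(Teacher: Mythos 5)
Your proof is correct and takes essentially the same approach as the paper's: both arguments produce near-optimal generating sets and partitions for $M$ and $N$, arrange (via refinement/equal traces) that the two partitions are equivalent to a single partition lying in the diffuse subalgebra $B$, and then conjugate the generating sets by unitaries of $M$ and $N$ so that this common partition works for the union, giving additivity of the $\I$-estimates.
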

\begin{proof}
Given $\eps>0$, choose finite subsets $X\subset M$ and $Y\subset N$ and families $P\in\P(M)$ and $Q\in\P(N)$ with $W^*(X)=M$, $W^*(Y)=N$ and 
\begin{equation}\label{FreeProduct.AmalDiffuse.1}
\I(X;P)<\Gen{M}+\frac{\eps}{2},\quad\I(Y;Q)<\Gen{N}+\frac{\eps}{2}.
\end{equation}
By refining if necessary, it may be assumed that $P$ and $Q$ are equivalent since Proposition \ref{Prelim.Refine} ensures that the estimate (\ref{FreeProduct.AmalDiffuse.1}) is unaffected by refinement.  Since $B$ is diffuse, choose $E\in\P_B$ equivalent to $P$ and $Q$ and unitaries $u\in M$ and $v\in N$ with $uPu^*=E$ and $vQv^*=E$.  Then $uXu^*\cup vYv^*$ is a finite generating set for $M*_BN$ and
$$
\I(uXu^*\cup vYv^*;E)=\I(X;P)+\I(Y;Q)<\Gen{M}+\Gen{N}+\eps,
$$
from which the result follows.
\end{proof}

\begin{theorem}\label{FreeProduct.General}
Let $M$ and $N$ be  \IIi factors.  Then
$$
\Gen{M*N}\leq \Gen{M}+\Gen{N}+\frac{1}{2}.
$$
\end{theorem}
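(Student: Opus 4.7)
Plan: The strategy is to introduce diffuse abelian subalgebras from $M$ and $N$ separately so that Lemma \ref{FreeProduct.AmalDiffuse} becomes applicable at each stage, ultimately reducing to an estimate for a free product of two diffuse abelian algebras, for which Corollary \ref{NGenerators.Group} applies.

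First, choose any diffuse abelian subalgebra $A\subset M$; such an $A$ exists as $M$ is a \IIi factor. In $M*N$ set $\widetilde N=W^*(A\cup N)$. Since $A\subset M$ and $M,N$ are free, the subalgebras $A$ and $N$ are free, and so $\widetilde N$ is trace-preservingly isomorphic to $A*N$, which is itself a \IIi factor (free product of a diffuse algebra with a \IIi factor). By the standard relative-freeness principle (if $P,Q$ are freely independent subalgebras of an ambient tracial factor and $B_0\subseteq Q$, then $P\vee B_0$ and $Q$ are freely independent with amalgamation over $B_0$), applied with $P=N$, $Q=M$, $B_0=A$, the subalgebras $M$ and $\widetilde N$ are freely independent over $A$ in $M*N$. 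Hence $M*_A\widetilde N=M*N$, and Lemma \ref{FreeProduct.AmalDiffuse} gives
\begin{equation*}
\Gen{M*N}\leq \Gen{M}+\Gen{A*N}.
\end{equation*}

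Second, repeat the construction inside $A*N$: pick a diffuse abelian $B\subset N$ and consider the subfactor $A*B=W^*(A\cup B)\subset A*N$, which is a \IIi factor. The same relative-freeness principle (now with $P=A$, $Q=N$, $B_0=B$) shows that $A*B$ and $N$ are freely independent with amalgamation over $B$ inside $A*N$, so $(A*B)*_B N=A*N$, and a second application of Lemma \ref{FreeProduct.AmalDiffuse} yields
\begin{equation*}
\Gen{A*N}\leq \Gen{A*B}+\Gen{N}.
\end{equation*}
Since $A$ and $B$ are each isomorphic to $\VN{\mathbb Z}$, their free product $A*B$ is isomorphic to the free group factor $\VN{\mathbb F_2}$. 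As $\mathbb F_2$ is an I.C.C.\ group generated by $2$ elements, Corollary \ref{NGenerators.Group} supplies $\Gen{A*B}\leq (2-1)/2=1/2$. Chaining the three inequalities delivers $\Gen{M*N}\leq\Gen{M}+\Gen{N}+1/2$.

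The main obstacle is the careful verification of the two amalgamated free-product identifications $M*_A\widetilde N=M*N$ and $(A*B)*_B N=A*N$. Both rest on the relative-freeness principle stated above, which follows directly from the alternating-moment characterisation of freeness once one checks tracial compatibility; granted this, the theorem is just a double application of Lemma \ref{FreeProduct.AmalDiffuse} together with Corollary \ref{NGenerators.Group}.
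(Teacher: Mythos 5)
Your proof is correct and follows essentially the same route as the paper: both decompose $M*N$ as an iterated amalgamated free product over diffuse abelian subalgebras $A\subset M$ and $B\subset N$, apply Lemma \ref{FreeProduct.AmalDiffuse} twice, and bound $\Gen{A*B}=\Gen{\VN{\FG_2}}$ by $1/2$ (the paper quotes Theorem \ref{Scaling.FreeGroup}, whose bound itself comes from Corollary \ref{NGenerators.Group}, which you invoke directly). The only cosmetic difference is the grouping — you write $M*_A\bigl((A*B)*_BN\bigr)$ where the paper writes $\bigl(M*_A(A*B)\bigr)*_BN$ — and the paper likewise asserts the amalgamated identifications without proof, so your explicit appeal to the relative-freeness principle is if anything slightly more careful.
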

\begin{proof}
Choose masas $A\subset M$ and $B\subset N$. Then $M\cong M*_AA$ and $N\cong N*_BB$, and so
$$
M*N\cong (M*_AA)*(B*_BN)\cong (M*_A\VN{\FG_2})*_BN,
$$
where $\VN{\FG_2}=A*B$.  Now use Lemma \ref{FreeProduct.AmalDiffuse} twice to obtain
$$
\Gen{M*N}\leq\Gen{M*_A\VN{\FG_2}}+\Gen{N}\leq\Gen{M}+\Gen{N}+\Gen{\VN{\FG_2}}.
$$
The result follows as Theorem \ref{Scaling.FreeGroup} gives $\Gen{\VN{\FG_2}}\leq 1/2$.
\end{proof}

The remainder of this section examines free products with finite hyperfinite von Neumann algebras.
In \cite{Jung.FreeEntropyHyperfinite}, Jung proves that
for a fixed hyperfinite von Neumann algebra
$Q$ with a fixed faithful normal tracial state $\phi$, 
Voiculescu's modified free entropy dimension $\delta_0(X)$ is the same for
all finite sets $X$ that generate $Q$.
Write $\delta_0(Q)$ for this quantity.
The definition of the modified free entropy dimension will be given in the next section, which discusses free entropy dimension in conjunction with the generator invariant.  Here  only the value of $\delta_0(Q)$ is needed.  Following \cite{Jung.FreeEntropyHyperfinite}, given $(Q,\phi)$, decompose $Q$ over its centre to obtain
$$
Q\cong Q_0\oplus\left(\bigoplus_{i=1}^s\Mat{k_i}\right),
$$
where $Q_0$ is diffuse or $\{0\}$, the sum on the right is either empty, finite or countably infinite and each $k_i\in\mathbb N$.  The trace $\phi$ is given by
$$
\phi=\alpha_0\phi_0\oplus\left(\bigoplus_{i=1}^s\alpha_i\ttr_{k_i}\right),
$$
where 
\begin{itemize}
\item $\alpha_0>0$ and $\phi_0$ is a faithful normal trace on $Q_0$, if $Q_0\neq\{0\}$;
\item $\alpha_0=0$ and $\phi_0=0$ if $Q_0=\{0\}$;
\item $\ttr_{k_i}$ is the tracial state on the ${k_i}\times{k_i}$ matrices $\Mat{k_i}$ and each $\alpha_i>0$.
\end{itemize}
Then, from \cite{Jung.FreeEntropyHyperfinite},
\begin{equation}\label{FreeProduct.FreeEntropy}
\delta_0(Q)=1-\sum_{i=1}^s\frac{\alpha_i^2}{k_i^2}.
\end{equation}
Furthermore, as also  noted in \cite{Jung.FreeEntropyHyperfinite}, this quantity agrees with the `free dimension number' for $Q$ defined in earlier work of the first author \cite{Dykema.FreeProductHyperfinite}.  In this work it was shown (Theorem 4.6 of~\cite{Dykema.FreeProductHyperfinite})
that if $A=L^\infty[0,1]$ is equipped with the usual trace $\int_0^1\cdot\ \mathrm{d}t$, then $A*Q\cong\VN{\FG_r}$, where $r=\delta_0(Q)+1$.

\begin{theorem}\label{FreeProduct.Hyperfinite}
Let $M$ be a \IIi factor and $Q$ a hyperfinite von Neumann algebra with a fixed faithful normalised trace $\phi$.  Then
$$
\Gen{M*Q}\leq\Gen{M}+\frac{1}{2}\delta_0(Q).
$$
\end{theorem}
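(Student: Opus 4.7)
The plan is to insert a diffuse masa of $M$ between $M$ and $Q$, converting the ordinary free product $M*Q$ into an amalgamated free product to which Lemma \ref{FreeProduct.AmalDiffuse} applies, and to identify the second factor as an interpolated free group factor via Dykema's computation recalled just before the statement. The degenerate case $\delta_0(Q)=0$ is trivial: formula \eqref{FreeProduct.FreeEntropy} forces $Q\cong\mathbb C$, whence $M*Q\cong M$. Assume henceforth that $\delta_0(Q)>0$ and that $\Gen{M}<\infty$.

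Choose a masa $A\subset M$, so that $A\cong L^\infty[0,1]$ with its canonical trace. Theorem~4.6 of \cite{Dykema.FreeProductHyperfinite} then identifies
\[
A*Q\cong\VN{\FG_{1+\delta_0(Q)}},
\]
which is a \IIi factor since $\delta_0(Q)>0$. By associativity of the free product, the subalgebra generated by $A$ and $Q$ inside $M*Q$ is isomorphic to $A*Q$, and $M$ is free from this subalgebra with amalgamation over $A$, yielding
\[
M*Q\cong M*_A(A*Q)\cong M*_A\VN{\FG_{1+\delta_0(Q)}}.
\]
Since $A$ is diffuse, Lemma \ref{FreeProduct.AmalDiffuse} then gives
\[
\Gen{M*Q}\leq\Gen{M}+\Gen{\VN{\FG_{1+\delta_0(Q)}}},
\]
and Theorem \ref{Scaling.FreeGroup} bounds $\Gen{\VN{\FG_{1+\delta_0(Q)}}}=\alpha\delta_0(Q)\leq\tfrac12\delta_0(Q)$, completing the estimate.

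The only step requiring care is the associativity identification $M*Q\cong M*_A(A*Q)$: one checks using the $\tr$-preserving conditional expectations onto $A$ that freeness of $M$ and $Q$ over $\mathbb C$ inside $M*Q$ implies freeness of $M$ and $W^*(A\cup Q)$ over $A$, while $W^*(A\cup Q)$ coincides with the free product $A*Q$ because $A$ and $Q$ are themselves free over $\mathbb C$. This is a standard fact for tracial von Neumann algebras, and once it is in place the rest of the argument is a direct combination of \cite[Theorem 4.6]{Dykema.FreeProductHyperfinite}, Lemma \ref{FreeProduct.AmalDiffuse}, and Theorem \ref{Scaling.FreeGroup}.
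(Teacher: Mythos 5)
Your argument is correct and is essentially the paper's own proof: choose a masa $A\subset M$, identify $A*Q\cong\VN{\FG_{1+\delta_0(Q)}}$ via Dykema's Theorem~4.6, rewrite $M*Q\cong M*_A(A*Q)$, and combine Lemma \ref{FreeProduct.AmalDiffuse} with the bound $\Gen{\VN{\FG_{1+r}}}=r\alpha\leq r/2$ from Theorem \ref{Scaling.FreeGroup}. Your extra care with the degenerate case $Q\cong\mathbb C$ and with justifying the associativity $M*Q\cong M*_A(A*Q)$ only makes explicit points the paper leaves implicit.
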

\begin{proof}
Choose a masa $A\subset M$ so that $A$ is isomorphic to $L^\infty[0,1]$ with the trace on $A$ (coming from $\tr_M$) being given by $\int_0^1\cdot\ \mathrm{d}t$.  The discussion preceding the theorem gives
$$
M*Q\cong M*_A*(A*Q)\cong M*_A\VN{\FG_r}
$$
where $r=1+\delta_0(Q)$ and $A$ is a masa in $\VN{\FG_r}$.  By Proposition \ref{FreeProduct.AmalDiffuse} and Theorem \ref{Scaling.FreeGroup},
$$
\Gen{M*Q}\leq\Gen{M}+\Gen{\VN{\FG_r}}\leq\Gen{M}+\frac{r-1}{2}=\Gen{M}+\frac{1}{2}\delta_0(Q),
$$
exactly as required.
\end{proof}

The next two corollaries are obtained by taking $Q$ to be successively the $n\times n$ matrices with the usual normalised trace and to be $\VN{\mathbb Z_n}$, with the group trace.  The results follow from calculating $\delta_0(\Mat{n})=1-\frac{1}{n^2}$ and $\delta_0(\VN{\mathbb Z_n})=1-\frac{1}{n}$ from Jung's formula (\ref{FreeProduct.FreeEntropy}).
\begin{corollary}\label{FreeProduct.Mn}
Let $M$ be a \IIi factor and $n\geq 2$.  Then 
$$
\Gen{M*\Mat{n}}\leq\Gen{M}+\frac{1}{2}-\frac{1}{2n^2}.
$$
\end{corollary}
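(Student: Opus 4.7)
The plan is to deduce this corollary as a direct specialization of Theorem \ref{FreeProduct.Hyperfinite}, by taking $Q = \Mat{n}$ equipped with its unique normalised tracial state $\ttr_n$. Since $\Mat{n}$ is finite-dimensional, it is certainly hyperfinite, so the hypotheses of the theorem are satisfied, and I obtain
$$
\Gen{M*\Mat{n}}\leq\Gen{M}+\tfrac{1}{2}\delta_0(\Mat{n}).
$$

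The remaining task is therefore the computation of $\delta_0(\Mat{n})$ using Jung's formula (\ref{FreeProduct.FreeEntropy}). First I would identify the data in the central decomposition of $(\Mat{n},\ttr_n)$: there is no diffuse summand, so $\alpha_0 = 0$ and $Q_0 = \{0\}$; and there is a single matrix summand with $s = 1$, $k_1 = n$, and weight $\alpha_1 = 1$, corresponding to the fact that $\ttr_n$ is already the normalised trace on $\Mat{n}$. Substituting into (\ref{FreeProduct.FreeEntropy}) gives
$$
\delta_0(\Mat{n}) \;=\; 1 - \frac{\alpha_1^2}{k_1^2} \;=\; 1 - \frac{1}{n^2}.
$$

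Combining the two displays yields
$$
\Gen{M*\Mat{n}} \leq \Gen{M} + \tfrac{1}{2}\left(1 - \tfrac{1}{n^2}\right) = \Gen{M} + \tfrac{1}{2} - \tfrac{1}{2n^2},
$$
which is exactly the desired inequality. There is no real obstacle here: once Theorem \ref{FreeProduct.Hyperfinite} is in hand, the corollary reduces to correctly reading off the normalising constants in Jung's formula for the single-matrix case, as already indicated in the paragraph preceding the corollary statement.
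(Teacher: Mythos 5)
Your proposal is correct and is exactly the paper's argument: the paper obtains this corollary by applying Theorem \ref{FreeProduct.Hyperfinite} with $Q=\Mat{n}$ and reading off $\delta_0(\Mat{n})=1-\frac{1}{n^2}$ from Jung's formula (\ref{FreeProduct.FreeEntropy}). Your identification of the decomposition data ($s=1$, $k_1=n$, $\alpha_1=1$, $\alpha_0=0$) and the resulting arithmetic are both right.
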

\begin{corollary}\label{FreeProduct.Zn}
Let $M$ be a \IIi factor and $n\geq 2$. Then
$$
\Gen{M*\VN{\mathbb Z_n}}\leq\Gen{M}+\frac{1}{2}-\frac{1}{2n}.
$$
\end{corollary}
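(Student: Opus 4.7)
The strategy is straightforward: this is an immediate application of Theorem \ref{FreeProduct.Hyperfinite} with $Q = \VN{\mathbb Z_n}$, so the only substantive thing to do is compute $\delta_0(\VN{\mathbb Z_n})$ via Jung's formula (\ref{FreeProduct.FreeEntropy}).

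First I would identify the structure of $\VN{\mathbb Z_n}$. Since $\mathbb Z_n$ is a finite abelian group of order $n$, its group von Neumann algebra is abelian and $n$-dimensional, so
$$\VN{\mathbb Z_n} \cong \mathbb C^n = \bigoplus_{i=1}^n \mathbb C,$$
with the canonical group trace assigning weight $1/n$ to each minimal projection. In the notation preceding (\ref{FreeProduct.FreeEntropy}), this means $Q_0 = \{0\}$, $s = n$, and for each $1 \leq i \leq n$ the summand $\Mat{k_i}$ has $k_i = 1$ with weight $\alpha_i = 1/n$.

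Substituting into Jung's formula (\ref{FreeProduct.FreeEntropy}) gives
\begin{equation*}
\delta_0(\VN{\mathbb Z_n}) = 1 - \sum_{i=1}^{n}\frac{(1/n)^2}{1^2} = 1 - \frac{1}{n}.
\end{equation*}

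With this in hand, Theorem \ref{FreeProduct.Hyperfinite} applied to the hyperfinite (in fact finite-dimensional) algebra $\VN{\mathbb Z_n}$ equipped with its canonical trace yields
\begin{equation*}
\Gen{M*\VN{\mathbb Z_n}} \leq \Gen{M} + \frac{1}{2}\delta_0(\VN{\mathbb Z_n}) = \Gen{M} + \frac{1}{2} - \frac{1}{2n},
\end{equation*}
which is the desired bound. There is no real obstacle here; the only point that requires any care is confirming that the trace on $\VN{\mathbb Z_n}$ coming from the free product construction coincides with the canonical normalised group trace used when plugging into Jung's formula, but this is built into the hypothesis of Theorem \ref{FreeProduct.Hyperfinite}.
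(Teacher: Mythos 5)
Your proof is correct and is exactly the paper's argument: apply Theorem \ref{FreeProduct.Hyperfinite} with $Q=\VN{\mathbb Z_n}$ and compute $\delta_0(\VN{\mathbb Z_n})=1-\frac{1}{n}$ from Jung's formula using the decomposition $\VN{\mathbb Z_n}\cong\mathbb C^n$ with each minimal projection of trace $1/n$. Nothing further is needed.
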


\section{Free entropy dimension}
The objective in this section is to relate the generator invariant to Voiculescu's modified
free entropy dimension by proving the inequalities
\begin{equation}\label{Entropy.NonSA}
\delta_0(X)\leq 1+2\I(X),
\end{equation}
when $X$ is a finite generating set in a finite von Neumann algebra $M$ and,
under the extra assumption that $X$ consists of self--adjoint elements,
\begin{equation}\label{Entropy.SA}
\delta_0(X)\leq 1+\I(X).
\end{equation}
For certain sets $X$ of operators, these inequalities give lower bounds on $\I(X)$.
These seem to be the only such lower bounds that are currently known.
Consider the case of a $DT$-operator $Z$,
introduced by the first author and Haagerup in \cite{Dykema.DT}.
By~\cite{Dykema.DTSubspaces} each such $Z$, including the quasi--nilpotent DT--operator $T$, generates $\VN{\FG_2}$.
The operator $Z$ is constructed by realising $\VN{\FG_2}$ as generated by a 
semicircular element $S$ together with a free copy of $L^\infty[0,1]$, using projections
from $L^\infty[0,1]$ to cut out an ``upper triangular'' part of $S$, which is the quasi--nilpotent
DT--operator $T$, and then adding an operator from $L^\infty[0,1]$ to get $Z$.
Using projections from $L^\infty[0,1]$, one easily sees that
$\I(Z)\leq1/2$.
On the other hand, since, by \cite{Dykema.DTEntropy}, $\delta_0(Z)=2$,
(\ref{Entropy.NonSA}) gives
\[
\I(Z)=1/2.
\]
Similarly, free semi--circular elements $h_1,h_2$ generating $\VN{\FG_2}$ satisfy $\delta_0(h_1,h_2)=2$ from \cite{Voiculescu.Entropy2,Voiculescu.Entropy3}, so that (\ref{Entropy.SA}) gives $\I(h_1,h_2)\geq 1$.  The reverse inequality follows by taking a sufficiently fine family of projections in either $W^*(h_1)$ or $W^*(h_2)$ --- see the proof of Lemma \ref{SelfAd.NGenerators} --- so that
\[
\I(h_1,h_2)=1.
\]
%  If $\Gen{\VN{\FG_2}}=1/2$, then these sets provide generators attaining $\Gen{\VN{\FG_2}}$ and $\GenSA{\VN{\FG_2}}$ respectively.

The central question in the theory of microstates free entropy dimension is that of invariance: if one has two finite generating sets $X_1$ and $X_2$ for the same \IIi factor must $\delta_0(X_1)=\delta_0(X_2)$?  Certain conditions on the factor, such as having a Cartan masa (\cite{Voiculescu.Entropy3}) and non-primeness (\cite{Ge.AppFreeEntropy}), imply that all finite generating sets have free-entropy at most one.  Independently Jung \cite{Jung.Strong1Bounded} and Hadwin and Shen \cite{Hadwin.OrbitDimension} have shown that the existance of a finite generating set $X$ for a \IIi factor with $\delta_0(X)=1$ and certain additional technical conditions, which are subtly different in each approach, imply that all finite generating sets $Y$ for $M$ have $\delta_0(Y)\leq 1$.  Before discussing these conditions further and establishing inequalities (\ref{Entropy.NonSA}) and (\ref{Entropy.SA}), the prevailing notation in this area is outlined.  The definition of the modified free-entropy dimension given below is Jung's covering--number reformulation from \cite{Jung.FreeEntropyLemma,Jung.HyperfiniteInequality}.

Let $M$ be a \IIi factor and continue to write $\tr$ for the faithful normal trace on $M$ normalised by $\tr(1)=1$.  For $k\in\mathbb N$ write $\Mat{k}$ for the $k\times k$ matrices, equipped with the trace $\TR_k$ normalised with $\TR_k(1)=1$. For a finite subset $X=\{x_1,\dots,x_n\}\subset M$, $\gamma>0$, $k,m\in\mathbb N$ and $R>0$ define the microstate space $\Gamma_R(X;m,k,\gamma)$ to be the set of all $n$-tuples $(a_1,\dots,a_n)$ of $k\times k$-matrices whose $^*$-moments approximate those of $(x_1,\dots,x_n)$ up to order $m$ within a tolerance of $\gamma$ and whose norms are bounded by $R$.
This means that $\nm{a_i}\leq R$ for all $i$ and
$$
\left|\tr(x_{i_1}^{j_1}\dots x_{i_p}^{j_p})-\TR_k(a_{i_1}^{j_1}\dots a_{i_p}^{j_p})\right|<\gamma,
$$
for all $p\leq m$, $i_1,\dots,i_p\in\{1,\dots,n\}$ and $j_1,\dots,j_m\in\{1,*\}$. When all the $x_j$'s are self--adjoint, it makes no difference to the definition of $\delta_0(X)$ whether  all the $a_i$'s are required to be self--adjoint --- see for example the beginning
of Section 3 of~\cite{Dykema.DTEntropy}.

Given $\eps>0$, the covering number $\Cover{\eps}{Y}$ of a metric space $Y$  is the minimal cardinality of an $\eps$-net for $Y$.  One easy estimate,  used in the sequel, is
\begin{equation}\label{Entropy.CoverPack}
\Cover{\eps}{Y}\leq\Pack{\eps/2}{Y},
\end{equation}
where $\Pack{\eps/2}{Y}$ is the maximal number of disjoint open $\eps/2$-balls which can be found in $Y$.  In \cite{Jung.FreeEntropyLemma}, Jung defines, for $m\in\mathbb N$, $\gamma,\eps>0$ and $R>0$,
\begin{equation}\label{Entropy.10}
\mathbb K_{\eps,R}(X;m,\gamma)=\limsup_{k\rightarrow\infty}k^{-2}\log\Cover{\eps}{\Gamma_R(X;m,k,\gamma)}.
\end{equation}
where the metric on $\Gamma_R(X;m,k,\gamma)$ is that obtained from the Euclidian norm on $(\Mat{k})^n$  given by
\begin{equation}\label{Entropy.DefMetric}
\nm{(a_1,\dots,a_n)}_2^2=\left(\sum_{l=1}^n\TR_k(a_l^*a_l)\right)^{1/2}.
\end{equation}
Then define
\begin{equation}\label{Entropy.11}
\mathbb K_{\eps,R}(X)=\inf_{m\in\mathbb N,\gamma>0}\mathbb K_{\eps,R}(X;m,\gamma).
\end{equation}
and
$$
\mathbb K_\eps(X)=\sup_{R>0}K_{\eps,R}(X).
$$
Jung shows in Corollary 24 of \cite{Jung.FreeEntropyLemma}, that the modified free entropy dimension $\delta_0(X)$ is given by
\begin{equation}\label{Entropy.DefDelta}
\delta_0(X)=\limsup_{\eps\rightarrow 0}\frac{\mathbb K_{\eps}(X)}{|\log \eps|}.
\end{equation}

In \cite{Jung.Strong1Bounded}, Jung made a detailed analysis of the rate of convergence
in the limit-superior in (\ref{Entropy.DefDelta}). For $\alpha>0$, say that
the generating set $X$ of $M$ to be \emph{$\alpha$-bounded}
if, for some $R\geq\max_{x\in X}\nm{x}$, there exists some constant $K$ and $\eps_0>0$ such that
$$
\mathbb K_{\eps,R}(X)\leq \alpha|\log\eps|+K,
$$
for all $0<\eps<\eps_0$.  Corollary 1.4 of \cite{Jung.Strong1Bounded} demonstrates that this is equivalent to the original definition of $\alpha$-boundedness in \cite{Jung.Strong1Bounded} and that if $X$ is $\alpha$-bounded, then $\delta_0(X)\leq\alpha$.
Furthermore, $M$ is said to be \emph{strongly--$1$--bounded} if it has a generating set $X$
that is $1$-bounded and if there exists a self-adjoint element $x$ belonging to the $^*$-algebra
generated by $X$  that has finite free entropy.
Theorem 3.2 of \cite{Jung.Strong1Bounded} shows that if
$M$ is strongly--$1$--bounded, then every finite set of generators for $M$ is $1$--bounded.

The alternative approach of Hadwin and Shen in \cite{Hadwin.OrbitDimension} is to examine coverings by unitary orbits of balls. For $m,k,\gamma,R$ as above and $\eps>0$, define 
$$
\nu_\eps(\Gamma_R(x_1,\dots,x_n;m,k,\gamma))
$$
to be the minimum cardinality of a set $\Lambda\subset \Gamma_R(x_1,\dots,x_n;m,k,\gamma)$ such that for every $a=(a_1,\dots,a_n)\in\Gamma_R(x_1,\dots,x_n;m,k,\gamma)$ there is some unitary $u\in\Mat{k}$ and $b\in\Lambda$ with $\nm{uau^*-b}_2\leq\eps$, i.e. the microstate space $\Gamma_R(x_1,\dots,x_n;m,k,\gamma)$ is covered by $\nu_\eps(\Gamma_R(x_1,\dots,x_n;m,k,\gamma))$ orbit $\eps$-balls.  Here $uau^*$ is defined to be the $n$-tuple $(ua_1u^*,\dots,ua_nu^*)$.  Then define
$$
\ko(x_1,\dots,x_n;R,\eps)=\inf_{m\in\mathbb N,\gamma>0}\limsup_{k\rightarrow\infty}\frac{\log(\nu_\eps(\Gamma_R(x_1,\dots,x_n;m,k,\gamma)))}{-k^2\log\eps}
$$
and the \emph{upper free-orbit dimension} $\ko_2(x_1,\dots,x_n)$ by
$$
\ko_2(x_1,\dots,x_n)=\sup_{0<\eps<1}\sup_{R>0}\ko(x_1,\dots,x_n;R,\eps).
$$
The main theorem of \cite{Hadwin.OrbitDimension} (Section 3, Theorem 1) is that if $X$ is a finite set of generators for a \IIi factor with $\ko_2(X)=0$, then every finite set of generators $Y$ for this factor also has $\ko_2(Y)=0$. Therefore, it makes sense to say a \IIi factor has zero upper free orbit-dimension when it has a finite generating set $X$ with $\ko_2(X)=0$.

The connection between the upper free orbit-dimension and Voiculescu's modified free entropy dimension is immediate from results of Szarek \cite{Szarek.NetsGrassmann} which show that there is an absolute constant $C>0$ such that the groups $\Unitary_k$ of unitary matrices in $\Mat{k}$ equipped with the metric induced from the operator norm satisfy
$$
K_\eps(\Unitary(K))\leq \left(\frac{C}{\eps}\right)^{k^2}.
$$
Given a finite generating set $X=(x_1,\dots,x_n)$ for a \IIi factor $M$, take $S=2(\sum_{i=1}^n\nm{x_i}_2^2)^{1/2}$. Provided $m\geq 2$ and $\gamma$ is sufficiently small, the estimate
\begin{align*}
K_\eps(\Gamma_R(X;m,k,\gamma))&\leq\nu_{\eps/2}(\Gamma_R(X;m,k,\gamma))\cdot K_{\eps/8S}(\Unitary_k)\\
&\leq\left(\frac {8CS}{\eps}\right)^{k^2}\nu_{\eps/2}(\Gamma_R(X;m,k,\gamma)),
\end{align*}
follows. Therefore
\begin{align*}
\mathbb K_{\eps,R}(X;m,\gamma)&\leq\log(8CS/\eps)+\limsup_{k\rightarrow\infty}\log\nu_{\eps/2}(\Gamma_R(X;m,k,\gamma))
\end{align*}
and
\begin{align}
\mathbb K_{\eps,R}(X)&\leq\log(8CS/\eps)+\ko(X;R,\eps/2)|\log(\eps/2)\nonumber|\\
&\leq\left(1+\ko(X;R,\eps/2)\right)|\log\eps|+\ko(X;R,\eps/2)\log(2)+\log(8CS)\label{Entropy.FreeOrbit.1}.
\end{align}
The free orbit-dimension $\ko_1(X)$ of \cite{Hadwin.OrbitDimension} is defined by
$$
\ko_1(X)=\limsup_{\eps\rightarrow 0}\sup_{R>0}\ko(X;R,\eps).
$$
The relationships between the approaches of \cite{Jung.Strong1Bounded} and \cite{Hadwin.OrbitDimension} follow from the definition of $\ko_1$ and (\ref{Entropy.FreeOrbit.1}).
\begin{proposition}
Let $X$ be a finite set of generators for a \IIi factor $M$. Then $\delta_0(X)\leq 1+\ko_1(X)$ and $X$ is $\alpha$-bounded, for all $\alpha>1+\ko_1(X)$. 
\end{proposition}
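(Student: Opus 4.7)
The plan is to deduce both assertions as straightforward consequences of the estimate (\ref{Entropy.FreeOrbit.1}) already established in the excerpt, namely
\[
\mathbb K_{\eps,R}(X)\leq\bigl(1+\ko(X;R,\eps/2)\bigr)|\log\eps|+\ko(X;R,\eps/2)\log 2+\log(8CS),
\]
valid for every $R\geq\max_{x\in X}\nm{x}$, with $C$ an absolute constant and $S=2(\sum_{i=1}^n\nm{x_i}_2^2)^{1/2}$ depending only on $X$.

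First I would dispose of the trivial case $\ko_1(X)=\infty$, in which the bound on $\delta_0(X)$ is vacuous and there is no $\alpha$ to consider. Assuming henceforth $\ko_1(X)<\infty$, for the bound on $\delta_0(X)$ I would take the supremum over $R>0$ of both sides of the displayed inequality (permissible because $S$ does not depend on $R$), divide through by $|\log\eps|$, and take $\limsup$ as $\eps\to 0^+$. The term $\log(8CS)/|\log\eps|$ tends to zero because $S$ is a fixed constant, while $\bigl(\sup_{R>0}\ko(X;R,\eps/2)\bigr)\log 2/|\log\eps|$ tends to zero because $\sup_{R>0}\ko(X;R,\eps/2)$ is eventually bounded by the finiteness of $\ko_1(X)$. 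What remains after passing to $\limsup$ is exactly
\[
\delta_0(X)\leq 1+\limsup_{\eps\to 0^+}\sup_{R>0}\ko(X;R,\eps/2)=1+\ko_1(X).
\]

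For the $\alpha$-boundedness, I would fix $\alpha>1+\ko_1(X)$, set $\beta=\alpha-1>\ko_1(X)$, and use the definition of $\ko_1(X)$ as a $\limsup$ to choose $\eps_0>0$ such that $\sup_{R>0}\ko(X;R,\eta/2)<\beta$ for every $\eta\in(0,\eps_0)$. Substituting into the displayed inequality with any fixed $R\geq\max_{x\in X}\nm{x}$ yields
\[
\mathbb K_{\eps,R}(X)\leq(1+\beta)|\log\eps|+\beta\log 2+\log(8CS)=\alpha|\log\eps|+K,\qquad 0<\eps<\eps_0,
\]
where $K=\beta\log 2+\log(8CS)$ is a finite constant depending only on $X$ and $\alpha$. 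This is precisely the $\alpha$-boundedness condition in Jung's sense.

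No genuine obstacle arises in this argument; it is a bookkeeping exercise on (\ref{Entropy.FreeOrbit.1}). The only point requiring care is observing that the constant $S$ is determined by $X$ alone, so that it can be absorbed into the additive constant $K$ for the second statement and sent harmlessly to zero (after division by $|\log\eps|$) for the first.
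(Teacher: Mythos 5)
Your argument is correct and is exactly the route the paper intends: the paper states the proposition as an immediate consequence of (\ref{Entropy.FreeOrbit.1}) and the definition of $\ko_1$, and your write-up simply supplies the bookkeeping (taking $\sup_{R>0}$, dividing by $|\log\eps|$, and using the $\limsup$ definition of $\ko_1(X)$ to extract $\eps_0$ for the $\alpha$-boundedness claim). Nothing is missing.
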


\begin{proposition}
A \IIi factor whose upper free orbit dimension is zero, i.e. has a finite generating set $X$ with $\ko_2(X)=0$, is strongly $1$-bounded.
\end{proposition}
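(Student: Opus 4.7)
The plan is to apply inequality \eqref{Entropy.FreeOrbit.1} to an arbitrary finite generating set with vanishing upper free orbit dimension, which immediately yields $1$-boundedness, and then to arrange the secondary finite-free-entropy condition by augmenting the generating set with a well-chosen self-adjoint element from a masa in $M$.

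Fix a finite generating set $X=(x_1,\dots,x_n)$ of $M$ with $\ko_2(X)=0$ and let $R\geq\max_i\nm{x_i}$. Because
\[
\ko_2(X)=\sup_{0<\eps<1}\sup_{R>0}\ko(X;R,\eps)=0,
\]
the hypothesis forces $\ko(X;R,\eps/2)=0$ for every $\eps\in(0,2)$ and every admissible $R$. Writing $S=2\bigl(\sum_{i=1}^n\nm{x_i}_2^2\bigr)^{1/2}$ as in the derivation of \eqref{Entropy.FreeOrbit.1} and substituting the vanishing orbit dimension directly into that inequality produces
\[
\mathbb K_{\eps,R}(X)\leq|\log\eps|+\log(8CS),\qquad 0<\eps<2,
\]
which is precisely the condition defining $1$-boundedness of $X$ (take $\alpha=1$, $K=\log(8CS)$ and any $\eps_0\in(0,2)$).

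It remains to exhibit a generating set whose $^*$-algebra contains a self-adjoint element of finite free entropy. Choose a masa $A\subset M$, identify it with $L^\infty[0,1]$ in the trace-preserving way, and let $a\in A$ be the self-adjoint element corresponding to the identity function $t\mapsto t$ on $[0,1]$. Its spectral distribution is the uniform measure on $[0,1]$, which has finite logarithmic energy, so $\chi(a)>-\infty$. The enlarged set $Y=X\cup\{a\}$ still generates $M$ and contains $a$ as a self-adjoint element of finite free entropy in its $^*$-algebra. By the Hadwin--Shen invariance theorem (main theorem of~\cite{Hadwin.OrbitDimension}), $\ko_2(Y)=0$, and applying the displayed inequality with $Y$ in place of $X$ shows that $Y$ is $1$-bounded. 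Hence $M$ meets the definition of strong $1$-boundedness via the generating set $Y$.

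The substance of the argument is the direct substitution into \eqref{Entropy.FreeOrbit.1}; the only genuinely non-routine point is securing the auxiliary finite-free-entropy hypothesis in Jung's definition, for which it is essential that strong $1$-boundedness be witnessed by \emph{some} generating set rather than by the given $X$, so that Hadwin--Shen invariance may be invoked to pass from $X$ to the enlarged set $Y$.
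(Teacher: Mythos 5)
Your proof is correct and follows essentially the same route as the paper: bound $\mathbb K_{\eps,R}$ via inequality \eqref{Entropy.FreeOrbit.1} using the vanishing orbit dimension, adjoin a self-adjoint element of finite free entropy to the generating set, and invoke the Hadwin--Shen invariance theorem to see the enlarged set still has zero upper free orbit dimension and is therefore $1$-bounded. The only difference is that you make explicit the choice of adjoined element (a uniformly distributed generator of a masa), where the paper simply says to adjoin one.
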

\begin{proof}
By hypothesis there is a set of self-adjoint generators $X$ for the \IIi factor with $\ko_2(X)=0$.  If there is no element of finite entropy in the $^*$-algebra generated by $X$, adjoin one. This new set of generators $Y$ also has zero upper free orbit-dimension by \cite[Section 3, Theorem 1]{Hadwin.OrbitDimension}. The definition of $\ko_2$ ensures that
$$
\mathbb K_{\eps,R}(Y)\leq|\log\eps|+\log(8CS),
$$
from equation (\ref{Entropy.FreeOrbit.1}).  That is $Y$ is $1$-bounded, and so $M$ is strongly $1$-bounded.
\end{proof}

In this section the upper free-orbit dimension is used as it enables us to show, in Corollary \ref{Entropy.FreeGroup}, that no set of generators $X$ for an interpolated free group factor can have $\I(X)=0$. The strongly-$1$-bounded approach to this would only rule out the existance of such generators $X$ which also have an element of finite entopy in the $^*$-algebra they generate. Before proceeding to the main result of this section, recall the following lemma, due to Voiculescu. Formally, a proof can be constructed by copying the ideas of 
the proof of Proposition 1.6 of~\cite{Voiculescu.Entropy3}.
\begin{lemma}\label{Entropy.Proj}
Suppose that $X=\{x_1,\dots,x_n\}$ is a finite subset of $M$ with $W^*(X)=M$.
Fix $p\in\mathbb N$ and pairwise orthogonal projections $e_1,e_2,\dots,e_p$ in $M$ with sum $1$
and $\tau(e_i)=p^{-1}$ for each $i$.
Given $\gamma>0$ and $m\in\mathbb N$, there exists $\gamma'>0$, $m',k'\in\mathbb N$ such that if 
$$
(a_1,\dots,a_n)\in \Gamma(X;m',k,\gamma'),
$$
for some $k\geq k'$ with $p|k$, then there exist pairwise orthogonal projections $(f_1,\dots,f_p)$ in $\Mat{k}$ each with $\TR_k(f_i)=p^{-1}$ (so that $\sum_{i=1}^nf_i=1$) satisfying
$$
(a_1,\dots,a_n,f_1,\dots,f_p)\in\Gamma(x_1,\dots,x_n,e_1,\dots,e_p;m,k,\gamma).
$$
\end{lemma}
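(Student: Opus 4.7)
\medskip

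\noindent\textbf{Proof sketch.} The plan is to use $W^*(X)=M$ together with Kaplansky density to realise the $e_i$ as approximate $^*$-polynomials in $X$, then to apply these polynomials pointwise to a microstate of $X$ and polish the outputs into genuine orthogonal projections with the correct trace.

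First, since $W^*(X)=M$ and each $e_i$ is self-adjoint, Kaplansky density (applied in $\|\cdot\|_2$) produces, for any prescribed $\delta>0$, a degree $d\in\mathbb N$ and self-adjoint $^*$-polynomials $q_1,\dots,q_p$ in $2n$ non-commuting variables such that the elements $y_i:=q_i(x_1,\dots,x_n,x_1^*,\dots,x_n^*)\in M$ satisfy $\|y_i-e_i\|_2<\delta$ for $i=1,\dots,p$. Choose $\delta$ small (to be fixed after $m$ and $\gamma$ are given). Because
\[
\tr(e_ie_j)=p^{-1}\delta_{i,j},\qquad \sum_{i=1}^p e_i=1,
\]
a small enough $\delta$ forces the mixed and square moments $\tr(y_iy_j)$ to be as close to $p^{-1}\delta_{ij}$ as we please, and $\tr\bigl((\sum_i y_i-1)^2\bigr)$ to be arbitrarily small.

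Next, take $m'$ at least the maximum degree of any $^*$-monomial appearing in the products $q_i q_j$ (and in those used later to verify the order $m$ moments involving $e_i$), and take $\gamma'>0$ small. For any $(a_1,\dots,a_n)\in\Gamma_R(X;m',k,\gamma')$ with $p\mid k$, form the self-adjoint matrices $b_i:=q_i(a_1,\dots,a_n,a_1^*,\dots,a_n^*)\in\MatSA{k}$. Standard moment continuity gives $|\TR_k(b_ib_j)-\tr(y_iy_j)|$ and $|\TR_k(b_i)-\tr(y_i)|$ bounded by a prescribed small quantity, uniformly in $k$. Thus the $b_i$'s form a system of ``approximate projections'' of trace close to $p^{-1}$, mutually approximately orthogonal, with sum approximately $1$, with error controlled by $\delta$ and $\gamma'$ alone.

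The main technical step is to polish the $b_i$'s into honest projections $f_1,\dots,f_p$ in $\Mat{k}$, mutually orthogonal, with $\sum_i f_i=1$ and $\TR_k(f_i)=p^{-1}$ exactly, while keeping $\|f_i-b_i\|_2$ small. I would proceed by forming $S=\sum_{i=1}^p i\,b_i\in\MatSA{k}$, which is self-adjoint with spectrum concentrated near $\{1,\dots,p\}$; diagonalise $S=u\,\mathrm{diag}(\lambda_1,\dots,\lambda_k)\,u^*$, round each $\lambda_r$ to the nearest integer in $\{1,\dots,p\}$, and let $\tilde f_i$ be the spectral projection of the rounded operator onto $\{i\}$. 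The $\tilde f_i$ are exact pairwise orthogonal projections summing to $1$, and control of the spectral gap around the integers (via the approximate projection relations) yields $\|\tilde f_i-b_i\|_2$ small, uniformly in $k$. The traces $\TR_k(\tilde f_i)$ are fractions with denominator $k$ close to $p^{-1}$; since $p\mid k$, one may shuffle at most $O(1)$ eigenvectors between the $\tilde f_i$'s (an additional $\|\cdot\|_2$ perturbation of order $k^{-1/2}$) to obtain projections $f_i$ with $\TR_k(f_i)=p^{-1}$ on the nose. The hard part is precisely this polishing step: bounding the perturbation $\|f_i-b_i\|_2$ uniformly in $k$ by a quantity that tends to $0$ as $\delta,\gamma'\to 0$, which requires handling the possibility of eigenvalues of $S$ falling midway between integers; this is resolved because the approximate-projection relations force only an asymptotically negligible fraction of eigenvalues to lie in any given gap, so the resulting $\|\cdot\|_2$-error is controlled.

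Finally, a choice of $\delta, m',\gamma'$ sufficiently small compared with $\gamma$ and compared with the maximum coefficient size and degree of monomials of length $\le m$ in $n+p$ variables ensures that
\[
(a_1,\dots,a_n,f_1,\dots,f_p)\in \Gamma(x_1,\dots,x_n,e_1,\dots,e_p;m,k,\gamma),
\]
by combining the $^*$-moment estimates for the $a_i$'s, the identity $\tr(e_{i_1}\cdots e_{i_r})=p^{-1}$ or $0$, and the $\|\cdot\|_2$-smallness of $f_i-b_i=f_i-q_i(a,a^*)$. Taking $k'$ large enough that the $O(k^{-1/2})$ shuffle error is below the prescribed tolerance completes the proof.
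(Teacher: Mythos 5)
Your overall strategy is the right one, and it is essentially the argument the paper has in mind: the paper gives no proof of this lemma, instead deferring to Proposition 1.6 of Voiculescu's Entropy III paper, whose proof is exactly this combination of Kaplansky density (writing each $e_i$ as a norm-bounded self-adjoint polynomial $q_i$ in $X\cup X^*$ up to small $\|\cdot\|_2$-error), evaluation of those polynomials on the microstates, and functional-calculus polishing of the resulting approximate projections. The moment-transfer steps and the spectral-projection construction via $S=\sum_i i\,b_i$ are sound, provided you also record the uniform operator-norm bounds on the $b_i$ (coming from $\|a_i\|\le R$ and the fixed polynomials $q_i$), since every $\|\cdot\|_2$-perturbation made inside the trace of a word of length at most $m$ needs such bounds on the remaining factors.

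The one genuine error is in the trace-correction step. The quantities $\TR_k(\tilde f_i)$ are within some $\epsilon'$ of $p^{-1}$, where $\epsilon'$ is controlled by $\delta$ and $\gamma'$ but does \emph{not} shrink as $k\to\infty$; hence the rank of $\tilde f_i$ can differ from $k/p$ by as much as $\epsilon' k$ eigenvectors, not by $O(1)$, and the resulting $\|\cdot\|_2$-perturbation from the shuffle is of order $\sqrt{\epsilon'}$ rather than $k^{-1/2}$. Consequently your closing sentence --- choosing $k'$ large so that the shuffle error falls below tolerance --- does not work as stated: that error is made small by shrinking $\delta$ and $\gamma'$, not by enlarging $k$. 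Since $\sqrt{\epsilon'}\to 0$ as $\delta,\gamma'\to 0$, the proof survives this correction with no structural change; you should simply fold the shuffle error into the list of quantities controlled by the choice of $\delta$, $m'$ and $\gamma'$.
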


Now the main result of this section relating $\I(X)$ to $\ko_1(X)$ and $\ko_2(X)$.
\begin{proposition}\label{Entropy.Boundedness}
Let $M$ be a finite von Neumann algebra and $X\subset M$ a finite generating set for $M$. In general, $\ko_1(X)\leq2\I(X)$ and in the case that each $x\in X$ is self-adjoint, $\ko_1(X)\leq\I(X)$. Furthermore, if $\I(X)=0$, then $\ko_2(X)=0$.
\end{proposition}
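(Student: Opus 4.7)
The strategy is to bound the orbit-covering number $\nu_\eps(\Gamma_R(X;m,k,\gamma))$ directly by decomposing microstates according to a diagonalisable partition of the identity and exploiting the resulting block structure. Given $\eps>0$ and $R>0$, pick by Lemma \ref{Prelim.Diagonalisable} a partition $P=\{e_1,\dots,e_p\}\in\Peq$ with $\I(X;P)$ close to $\I(X)$, and let $T=\{(i,j,l):e_i x_l e_j\neq 0\}$, so that $|T|=p^2\I(X;P)$. For any target parameters $m,\gamma$, Lemma \ref{Entropy.Proj} supplies refined parameters $m',\gamma'$ such that every $(a_1,\dots,a_n)\in\Gamma_R(X;m',k,\gamma')$ (with $p\mid k$) extends to an enlarged microstate $(a_1,\dots,a_n,f_1,\dots,f_p)$ in which the $f_i$ are pairwise orthogonal projections summing to $1$ with $\TR_k(f_i)=1/p$. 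Since $\nu_\eps$ quotients by the unitary action and any two such tuples $(f_i)$ are unitarily conjugate, we may assume the $f_i$ equal a fixed standard diagonal block system, reducing the orbit-covering problem to an ordinary $\eps$-covering (in $2$-norm) of the standardized set of tuples extending this fixed $(f_i)$.

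For such a standardized tuple, decompose $a_l=\sum_{i,j}b_{i,j,l}$ with $b_{i,j,l}=f_i a_l f_j$ living in the $(i,j)$ block of size $(k/p)\times(k/p)$. When $(i,j,l)\notin T$, the relation $e_i x_l e_j=0$ combined with moment matching of order four forces $\|b_{i,j,l}\|_2^2<\gamma'$, so for $\gamma'$ small (in terms of $\eps,p$) the inactive blocks jointly contribute less than $\eps/2$ in $2$-norm. The active blocks span a real subspace of dimension $2|T|(k/p)^2=2\I(X;P)k^2$, and by the crucial joint estimate
\[
\sum_{(i,j,l)\in T}\|b_{i,j,l}\|_2^2\;\leq\;\sum_{l}\|a_l\|_2^2\;\leq\;S_0^2,\qquad S_0:=\Bigl(\sum_{l}\|x_l\|_2^2\Bigr)^{1/2}+1,
\]
they lie in a single $2$-norm ball whose radius $S_0$ depends on neither $R$ nor $p$. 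A standard Euclidean covering of this ball by $(\eps/2)$-balls, together with (\ref{Entropy.CoverPack}), then yields
\[
\nu_\eps(\Gamma_R(X;m',k,\gamma'))\;\leq\;\Bigl(\frac{C_0}{\eps}\Bigr)^{2\I(X;P)k^2}
\]
for a constant $C_0$ depending only on $S_0$. Dividing the logarithm by $-k^2\log\eps$ and taking the infimum over $m,\gamma$ gives $\ko(X;R,\eps)\leq 2\I(X;P)\log(C_0/\eps)/(-\log\eps)$; letting $\eps\to 0$ and then infimising over $P$ produces $\ko_1(X)\leq 2\I(X)$.

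When $X$ consists of self-adjoint elements, microstates may be taken self-adjoint, and then $b_{j,i,l}=b_{i,j,l}^*$ while each diagonal block $b_{i,i,l}$ is self-adjoint. Using the symmetry of $T$ under $(i,j)\mapsto(j,i)$, parametrising only by $(i,j,l)\in T$ with $i\leq j$, and counting real parameters ($2(k/p)^2$ for off-diagonal blocks, $(k/p)^2$ for diagonal ones) gives effective dimension $|T|(k/p)^2=\I(X;P)k^2$, halving the exponent and yielding the sharper bound $\ko_1(X)\leq\I(X)$. For the final assertion, the bound $\ko(X;R,\eps)\leq 2\I(X;P)\log(C_0/\eps)/(-\log\eps)$ is uniform in $R$ and, critically, in the size of $P$; so under the hypothesis $\I(X)=0$ we can, at each fixed $\eps$ and $R$, choose $P$ with $\I(X;P)$ arbitrarily small, forcing $\ko(X;R,\eps)=0$ and hence $\ko_2(X)=0$. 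The main obstacle is arranging that $C_0$ does not depend on the number $p$ of projections in $P$: a naive per-block covering would introduce a factor $\sqrt{|T|}\sim p$ and destroy the $\ko_2$ conclusion, so the proof must bound the active blocks jointly via $\sum\|b_{i,j,l}\|_2^2\leq\sum\|a_l\|_2^2$ rather than block-by-block.
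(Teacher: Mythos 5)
Your proposal is correct and follows essentially the same route as the paper's proof: choose a diagonalisable partition via Lemma \ref{Prelim.Diagonalisable}, use Lemma \ref{Entropy.Proj} to adjoin approximating projections, conjugate them to a fixed block system, and cover the microstates by the $\eps$-covering of a single ball of radius controlled by $\bigl(\sum_l\|x_l\|_2^2\bigr)^{1/2}$ inside the $2\I(X;P)k^2$-dimensional (or $\I(X;P)k^2$-dimensional, in the self-adjoint case) subspace of active blocks. The point you single out at the end --- that the radius of the covered ball must be bounded jointly via $\sum_{(i,j,l)}\|f_ia_lf_j\|_2^2\leq\sum_l\|a_l\|_2^2$ so the constant is independent of $p$ and $R$ --- is exactly how the paper arranges the $\ko_2$ conclusion.
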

\begin{proof}
Let $X=(x_1,\dots,x_n)$ be an $n$-tuple which generates $M$.  Fix $\eps>0$ and suppose that $c>\I(X)$. Use Lemma \ref{Prelim.Diagonalisable} to find $E=\{e_1,\dots,e_p\}\in\Peq(M)$ with $\I(X;E)<c$ and each $\tau(e_i)=p^{-1}$.  Let $R>0$ and write $S=(\sum_{i=1}^n\nm{x_i}_2^2)^{1/2}$.  Take $\gamma>0$ and $m\in\mathbb N$ with $m\geq 6$. Let $k',m'$ and $\gamma'$ be the constants obtained by applying Lemma \ref{Entropy.Proj} to $X$ and $(e_1,\dots,e_p)$.  Let $k\geq k'$ be divisible by $p$ and fix a family of pairwise orthogonal projections $(q_1,\dots,q_p)$ in $\Mat{k}$ with each $\tr_k(q_i)=p^{-1}$.  

For each $l=1,\dots,n$ write $T_l$ for the set of pairs $(i,j)$ with $e_ix_le_j\neq 0$.  As $\I(I;E)<c$, it follows that 
\begin{equation}\label{Entropy.SizeT}
\sum_{l=1}^n|T_l|<cp^2.
\end{equation}
Following the approach of \cite{Ge.AppFreeEntropy}, define the projection $Q$ from $(\Mat{k})^n$ into $(\Mat{k})^n$ by
$$
Q(a_1,\dots,a_n)=\left(\sum_{(i,j)\in T_l}q_ia_lq_j\right)_{l=1}^n.
$$
When each $x_l=x_l^*$, the sets $T_l$ are invariant under the adjoint operation so in this case $Q$ restricts to give a projection from $(\MatSA{k})^n$ into $(\MatSA{k})^n$. The range $Q((\Mat{k})^n)$ is a $2\sum_{l=1}^n|T_l|(k/p)^2$--dimensional subspace of $(\Mat{k})^n\cong\mathbb R^{2nk^2}$.
Under the additional assumption that $x_l=x_l^*$ for each $l$,
the range $Q((\MatSA{k})^n)$ is a $\sum_{l=1}^n|T_l|(k/p)^2$--dimensional subspace of
$(\MatSA{k})^n\cong\mathbb R^{nk^2}$.  

Let $Y$ be the subset $(Q(\Mat{k}^n))_{2S}$ consisting of all elements $(b_1,\dots,b_n)\in Q(\Mat{k}^n)$ with $\nm{(b_1,\dots,b_n)}_2\leq 2S$ if $X$ does not consist of self-adjoint elements, and let $Y=(Q((\MatSA{k})^n))_{2S}$ when each $x_l=x_l^*$.  Volume considerations give
$$
\Pack{\eps/2}{Y}\leq\left(\frac{2S}{\eps/2}\right)^{2\sum_{l=1}^n|T_l|(k/p)^2},
$$
in the first case and
$$
\Pack{\eps/2}{Y}\leq\left(\frac{2S}{\eps/2}\right)^{\sum_{l=1}^n|T_l|(k/p)^2},
$$
in the second. The simple estimate (\ref{Entropy.CoverPack}) combines with (\ref{Entropy.SizeT}) to give
$$
\Cover{\eps}{Y}\leq\begin{cases}\left(\frac{4S}{\eps}\right)^{ck^2},&x_l=x_l^*\textrm{ for all } l\\
\left(\frac{4S}{\eps}\right)^{2ck^2},&\textrm{otherwise.}\end{cases}
$$

Given any point $a=(a_1,\dots,a_n)\in\Gamma_R(X;m',k,\gamma)$, let $(f_1,\dots,f_p)$ be the orthogonal projections in $\Mat{k}$ with each $\tau_k(f_i)=p^{-1}$ from Lemma \ref{Entropy.Proj}. Now each $x_l$ can be written $x_l=\sum_{(i,j)\in T_l}e_ix_le_j$. Since
$$
(a_1,\dots,a_n,f_1,\dots,f_p)\in\Gamma_R(x_1,\dots,x_n,e_1,\dots,e_p;m,k,\gamma),
$$
it follows that
$$
\nm{a_l-\sum_{(i,j)\in T_l}f_ia_lf_j}^2_2<\gamma,
$$
from the standing assumption that $m\geq 6$.  Take a unitary $u\in\Mat{k}$ such that $uf_iu^*=q_i$ for each $i=1,\dots,p$.  Then $u^*Q(uau^*)u$ is the $n$-tuple with $\sum_{(i,j)\in T_l}f_ia_lf_j$ in the $l$-entry.  Thus
$$
\nm{a-u^*Q(uau^*)u}_2^2\leq n\gamma.
$$
Provided $\gamma$ is taken small enough, $\nm{a}_2\leq 2S$, so that $Q(uau^*)$ lies in the ball $Y$. If in addition, $(n\gamma)^{1/2}\leq\eps/2$, it follows at most $K_{\eps/2}(Y)$ orbit $\eps$-balls are required to cover $\Gamma_R(x_1,\dots,x_n;m,k,\gamma)$. That is
$$
\nu_\eps(\Gamma_R(x_1,\dots,x_n;m,k,\gamma)\leq\begin{cases}\left(\frac{8S}{\eps}\right)^{ck^2},&x_l=x_l^*\text{ for all }l;\\\left(\frac{8S}{\eps}\right)^{2ck^2},&\text{otherwise},\end{cases}
$$
for $R>0$, $m\geq 6$, $\gamma$ sufficiently small and $k\geq k'$ which is divisible by $p$.  Thus
\begin{align*}
\ko(x_1,\dots,x_n;R,\eps)&=\inf_{m\in\mathbb N,\gamma>0}\limsup_{k\rightarrow\infty}\frac{\log\nu_\eps(\Gamma_R(x_1,\dots,x_n;m,k,\gamma))}{-k^2\log\eps}\\
&\leq \begin{cases}c\left(1+\frac{\log(8S)}{|\log\eps|}\right),&x_l=x_l^*,\text {for all }l,\\2c\left(1+\frac{\log(8S)}{|\log\eps|}\right),&\text{otherwise}.\end{cases}
\end{align*}
This holds for each $c>\I(X)$ and so we can replace $c$ by $\I(X)$ in the inequality above to obtain
\begin{align*}
\ko_1(x_1,\dots,x_n)&=\limsup_{\eps\rightarrow 0}\sup_{R>0}\ko(x_1,\dots,x_n;R,\eps)\\
&\leq\begin{cases}\I(X),&x_l=x_l^*,\text{ for all }l,\\2\I(X),&\text{otherwse}.\end{cases}
\end{align*}
in general, and if $\I(X)=0$, then $\ko_2(X)=0$.
\end{proof}

The first two corollaries below follow immediately.
\begin{corollary}
Let $X$ be a finite set of generators for the \IIi factor $(M,\tau)$. Then
$$
\delta_0(X)\leq 1+2\I(X).
$$
\end{corollary}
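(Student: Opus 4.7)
The plan is to obtain this corollary by directly combining the two ingredients already assembled just above. First, Proposition \ref{Entropy.Boundedness} establishes the key inequality $\ko_1(X) \leq 2\I(X)$ for any finite generating set $X$ of the \IIi factor $M$ (with the sharper $\ko_1(X) \leq \I(X)$ available only in the self-adjoint case, which is not used here).

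Second, the proposition preceding it (the one deducing $\delta_0(X) \leq 1 + \ko_1(X)$ from Szarek's covering-number bound on the unitary group and inequality (\ref{Entropy.FreeOrbit.1})) converts a bound on the upper free-orbit dimension into a bound on Voiculescu's modified free-entropy dimension. Substituting the bound on $\ko_1(X)$ from Proposition \ref{Entropy.Boundedness} into this inequality yields
\[
\delta_0(X) \leq 1 + \ko_1(X) \leq 1 + 2\I(X),
\]
which is exactly the stated inequality. The author's phrase ``follow immediately'' reflects precisely this concatenation, so no additional argument is required. There is no real obstacle here — the entire technical content has been absorbed into Proposition \ref{Entropy.Boundedness} (the volume/packing estimate on $Q((\Mat{k})^n)$ together with the projection lemma of Voiculescu, Lemma \ref{Entropy.Proj}) and into the prior orbit-versus-free-entropy comparison (\ref{Entropy.FreeOrbit.1}).
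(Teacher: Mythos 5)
Your proposal is correct and matches the paper's intended argument exactly: the corollary is stated to ``follow immediately'' from Proposition \ref{Entropy.Boundedness} (giving $\ko_1(X)\leq 2\I(X)$) combined with the earlier proposition that $\delta_0(X)\leq 1+\ko_1(X)$. No further comment is needed.
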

\begin{corollary}
Let $X$ be a finite set of self-adjoint generators for the \IIi factor $(M,\tau)$. Then
$$
\delta_0(X)\leq 1+\I(X).
$$
\end{corollary}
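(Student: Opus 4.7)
The plan is to obtain this as a direct combination of two results already assembled in the section: the self-adjoint half of Proposition \ref{Entropy.Boundedness}, and the inequality $\delta_0(X) \leq 1 + \ko_1(X)$ from the preceding proposition relating $\ko_1$ to the modified free entropy dimension.

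More precisely, since $X$ consists of self-adjoint elements and generates $M$, Proposition \ref{Entropy.Boundedness} gives
\[
\ko_1(X) \leq \I(X).
\]
The earlier proposition (the one asserting $\delta_0(X) \leq 1 + \ko_1(X)$, derived from Szarek's covering estimates for $\Unitary_k$ via inequality (\ref{Entropy.FreeOrbit.1})) then yields
\[
\delta_0(X) \leq 1 + \ko_1(X) \leq 1 + \I(X),
\]
which is the desired bound.

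There is no real obstacle here, as the work has already been done in Proposition \ref{Entropy.Boundedness}: the crucial input is the self-adjoint refinement, which uses that when the $x_l$ are self-adjoint the sets $T_l$ are invariant under transposition of coordinates, so the projection $Q$ restricts to a map on $(\MatSA{k})^n$ whose range has only $\sum_l |T_l|(k/p)^2$ real dimensions rather than twice that. This halving of dimension is exactly what converts the factor $2\I(X)$ in the general inequality into $\I(X)$ in the self-adjoint case, and it propagates straight through to $\delta_0$.
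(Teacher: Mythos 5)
Your proposal is correct and matches the paper's intended argument exactly: the paper states that this corollary ``follows immediately,'' and the immediate derivation is precisely the chain $\delta_0(X)\leq 1+\ko_1(X)\leq 1+\I(X)$, using the self-adjoint case of Proposition \ref{Entropy.Boundedness} together with the earlier proposition linking $\delta_0$ to $\ko_1$. Your added remark correctly identifies the dimension-halving for the range of $Q$ on $(\MatSA{k})^n$ as the source of the improved constant in the self-adjoint case.
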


The next corollary follows from Proposition \ref{Entropy.Boundedness} and \cite[Section 3, Theorem 1]{Hadwin.OrbitDimension}. It shows in particular that $\I(X)>0$ for every generating set $X$ of an interpolated free group factor by \cite{Voiculescu.Entropy2,Voiculescu.Entropy3}.
\begin{corollary}\label{Entropy.FreeGroup}
Let $M$ be any \IIi factor having a finite generating set $Y$ with $\delta_0(Y)>1$. If $X$ is any finite generating set for $M$, then $\I(X)>0$.
\end{corollary}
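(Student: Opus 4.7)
The plan is to argue by contradiction using the previous proposition together with the invariance theorem of Hadwin and Shen. Suppose there exists a finite generating set $X$ for $M$ with $\I(X)=0$. By the final assertion of Proposition \ref{Entropy.Boundedness}, this forces $\ko_2(X)=0$, so $M$ has zero upper free orbit-dimension.

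Now invoke \cite[Section 3, Theorem 1]{Hadwin.OrbitDimension}, which asserts that the property $\ko_2(\cdot)=0$ passes to every finite generating set: the hypothesised generating set $Y$ with $\delta_0(Y)>1$ must also satisfy $\ko_2(Y)=0$. Since $\ko_1(Y)\leq\ko_2(Y)$ directly from the definitions (the limit-superior over small $\eps$ is dominated by the supremum over all $\eps\in(0,1)$), we conclude that $\ko_1(Y)=0$.

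Finally, apply the first inequality of Proposition \ref{Entropy.Boundedness} together with its consequence $\delta_0(Y)\leq 1+\ko_1(Y)$, which was recorded in the unlabelled proposition immediately following \eqref{Entropy.FreeOrbit.1}. This yields $\delta_0(Y)\leq 1$, contradicting the standing hypothesis $\delta_0(Y)>1$. Hence no generating set $X$ of $M$ can have $\I(X)=0$.

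There is no real obstacle here: the content of the corollary lies entirely in Proposition \ref{Entropy.Boundedness} and the Hadwin--Shen invariance theorem, and the argument is simply to chain these together through the inequality relating $\delta_0$ to $\ko_1$. The only thing worth flagging is that one should double-check the correct direction of inequality $\ko_1\leq\ko_2$, which follows at once from comparing a $\limsup_{\eps\to 0}$ with a $\sup_{0<\eps<1}$ of the same quantity $\sup_{R>0}\ko(\cdot;R,\eps)$.
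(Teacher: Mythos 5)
Your proof is correct and takes essentially the same route the paper intends: the corollary is stated there as following directly from Proposition \ref{Entropy.Boundedness} and the Hadwin--Shen invariance theorem, and your chain $\I(X)=0\Rightarrow\ko_2(X)=0\Rightarrow\ko_2(Y)=0\Rightarrow\ko_1(Y)=0\Rightarrow\delta_0(Y)\leq 1$ is precisely that argument. The inequality $\ko_1\leq\ko_2$ that you flag is indeed the only small point to verify, and your justification (comparing $\limsup_{\eps\to 0}$ with $\sup_{0<\eps<1}$ of the same quantity) is correct.
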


\begin{tabular*}{\textwidth}{l@{\hspace*{2cm}}l}
Ken Dykema \& Roger Smith               &Allan Sinclair\\
Department of Mathematics &School of Mathematics\\
Texas A\&M University     &University of Edinburgh\\
College Station, TX 77843           &Edinburgh EH9 3JZ\\
USA           &UK\\
 \texttt{kdyekma@math.tamu.edu}& \texttt{a.sinclair@ed.ac.uk}\\
\texttt{rsmith@math.tamu.edu} \\
{}\\
Stuart White\\
Department of Mathematics\\
University of Glasgow\\
University Gardens\\
Glasgow G12 8QW\\
UK\\
\texttt{s.white@maths.gla.ac.uk}
\end{tabular*}


\begin{thebibliography}{10}

\bibitem{Douglas.SingleGenerator}
R.~G. Douglas and C.~Pearcy.
\newblock Von {N}eumann algebras with a single generator.
\newblock {\em Michigan Math. J.}, 16:21--26, 1969.

\bibitem{Dykema.DTSubspaces}
K.~Dykema and U.~Haagerup.
\newblock Invariant subspaces of the quasinilpotent {DT}-operator.
\newblock {\em J. Funct. Anal.}, 209(2):332--366, 2004.

\bibitem{Dykema.FreeProductHyperfinite}
K.~J. Dykema.
\newblock Free products of hyperfinite von neumann algebras and free dimension.
\newblock {\em Duke Math. J.}, 69(1):97--119, 1993.

\bibitem{Dykema.InterpolatedFreeGroup}
K.~J. Dykema.
\newblock Interpolated free group factors.
\newblock {\em Pacific J. Math.}, 163(1):123--135, 1994.

\bibitem{Dykema.DT}
K.~J. Dykema and U.~Haagerup.
\newblock D{T}-operators and decomposability of {V}oiculescu's circular
  operator.
\newblock {\em Amer. J. Math.}, 126(1):121--189, 2004.

\bibitem{Dykema.DTEntropy}
K.~J. Dykema, K.~Jung, and D.~Shlyakhtenko.
\newblock The microstates free entropy dimension of any {DT}-operator is 2.
\newblock {\em Doc. Math.}, 10:247--261 (electronic), 2005.

\bibitem{Ge.AppFreeEntropy}
L.~Ge.
\newblock Applications of free entropy to finite von neumann algebras.
\newblock {\em Amer. J. Math.}, 119(2):467--485, 1997.

\bibitem{Ge.DecompositionProperties}
L.~Ge and S.~Popa.
\newblock On some decomposition properties for factors of type {${\rm II}\sb
  1$}.
\newblock {\em Duke Math. J.}, 94(1):79--101, 1998.

\bibitem{Ge.GeneratorT}
L.~Ge and J.~Shen.
\newblock Generator problem for certain property {$T$} factors.
\newblock {\em Proc. Natl. Acad. Sci. USA}, 99(2):565--567 (electronic), 2002.

\bibitem{Hadwin.OrbitDimension}
D.~Hadwin and J.~Shen.
\newblock Free orbit dimension of finite von {N}eumann algebras.
\newblock {\em J. Funct. Anal.}, 249(1):75--91, 2007.

\bibitem{Jones.Index}
V.~F.~R. Jones.
\newblock Index for subfactors.
\newblock {\em Invent. Math.}, 72(1):1--25, 1983.

\bibitem{Jung.FreeEntropyLemma}
K.~Jung.
\newblock A free entropy dimension lemma.
\newblock {\em Pacific J. Math.}, 211(2):265--271, 2003.

\bibitem{Jung.FreeEntropyHyperfinite}
K.~Jung.
\newblock The free entropy dimension of hyperfinite von {N}eumann algebras.
\newblock {\em Trans. Amer. Math. Soc.}, 355(12):5054--5089, 2003.

\bibitem{Jung.Strong1Bounded}
K.~Jung.
\newblock Strongly 1-bounded von neumann algebras.
\newblock {\em Geom. Func. Anal.}, to appear. arXiv:math.OA/0510576, 2005.

\bibitem{Jung.HyperfiniteInequality}
K.~Jung.
\newblock A hyperfinite inequality for free entropy dimension.
\newblock {\em Proc. Amer. Math. Soc.}, 134(7):2099--2108 (electronic), 2006.

\bibitem{Pearcy.SingleGeneratorType1}
C.~Pearcy.
\newblock {$W\sp*$}-algebras with a single generator.
\newblock {\em Proc. Amer. Math. Soc.}, 13:831--832, 1962.

\bibitem{Radulescu.FundamentalGroup}
F.~R{\u{a}}dulescu.
\newblock The fundamental group of the von {N}eumann algebra of a free group
  with infinitely many generators is {$\mathbb R_+$}.
\newblock {\em J. Amer. Math. Soc.}, 5(3):517--532, 1992.

\bibitem{Radulescu.MatricesFreeProd}
F.~R{\u{a}}dulescu.
\newblock Random matrices, amalgamated free products and subfactors of the von
  {N}eumann algebra of a free group, of noninteger index.
\newblock {\em Invent. Math.}, 115(2):347--389, 1994.

\bibitem{Shen.Generators}
J.~Shen.
\newblock Singly generated {$\mathrm{II}_1$} factors.
\newblock {\em J. Operator Theory}, to appear. arXiv:math.OA/0511327, 2005.

\bibitem{Sinclair.MasaBook}
A.~M. Sinclair and R.~R. Smith.
\newblock Finite von {N}eumann algbras and masas.
\newblock Book manuscript, Cambridge University Press, to appear.

\bibitem{Szarek.NetsGrassmann}
S.~J. Szarek.
\newblock Nets of {G}rassmann manifold and orthogonal group.
\newblock In {\em Proceedings of research workshop on Banach space theory (Iowa
  City, Iowa, 1981)}, pages 169--185, Iowa City, IA, 1982. Univ. Iowa.

\bibitem{Topping.Lectures}
D.~M. Topping.
\newblock {\em Lectures on von {N}eumann algebras}.
\newblock Number~36 in Van Nostrand Reinhol Mathematical Studies. Van Nostrand,
  London, 1971.

\bibitem{Voiculescu.Entropy2}
D.~Voiculescu.
\newblock The analogues of entropy and of {F}isher's information measure in
  free probability theory. {II}.
\newblock {\em Invent. Math.}, 118(3):411--440, 1994.

\bibitem{Voiculescu.Entropy3}
D.~Voiculescu.
\newblock The analogues of entropy and of {F}isher's information measure in
  free probability theory. {III}. {T}he absence of {C}artan subalgebras.
\newblock {\em Geom. Funct. Anal.}, 6(1):172--199, 1996.

\bibitem{Dykema.FreeBook}
D.~Voiculescu, K.~J. Dykema, and A.~Nica.
\newblock {\em Free Random Variablees}, volume~1 of {\em CRM Monograph}.
\newblock American Mathematical Society, Providence, 1992.

\bibitem{Wogen.Generators}
W.~Wogen.
\newblock On generators for von {N}eumann algebras.
\newblock {\em Bull. Amer. Math. Soc.}, 75:95--99, 1969.

\end{thebibliography}
\end{document}